\documentclass[oneside,pdflatex,sn-mathphys-num]{sn-jnl}

\ifdefined\pdfsuppresswarningpagegroup
\fi

\usepackage{amsmath,amssymb,mathtools}
\usepackage{amsthm}
\usepackage{booktabs}
\usepackage{graphicx}
\usepackage{tabularx}
\usepackage{placeins}
\usepackage{microtype}
\usepackage{xcolor}
\usepackage{multirow}

\usepackage{pgfplots}
\usepgfplotslibrary{groupplots}
\usetikzlibrary{calc}
\pgfplotsset{compat=1.18}

\hypersetup{
  pdfauthor={Zhengjie Sun},
  pdfsubject={Weight-adapted ANOVA kernel},
}
\usepackage[nameinlink,noabbrev]{cleveref}
\providecommand{\burl}[1]{\url{#1}}

\theoremstyle{thmstyleone}
\newtheorem{theorem}{Theorem}[section]
\newtheorem{proposition}[theorem]{Proposition}
\newtheorem{corollary}[theorem]{Corollary}
\newtheorem{lemma}[theorem]{Lemma}
\theoremstyle{thmstylethree}
\newtheorem{assumption}[theorem]{Assumption}
\newtheorem{definition}[theorem]{Definition}

\theoremstyle{thmstyletwo}
\newtheorem{remark}[theorem]{Remark}
\newtheorem*{remark*}{Remark}
\numberwithin{equation}{section}

\newcommand{\T}{\mathbb T}
\newcommand{\Z}{\mathbb Z}
\newcommand{\E}{\mathbb E}
\newcommand{\brho}{\boldsymbol{\rho}}
\newcommand{\bk}{\boldsymbol{k}}
\newcommand{\bell}{\boldsymbol{\ell}}

\newcommand{\bx}{\boldsymbol{x}}

\newcommand{\calK}{\mathcal{K}}
\DeclareMathOperator{\supp}{supp}

\begin{document}

\title[Weight-adapted ANOVA quasi-interpolation]
{Weight-adapted ANOVA kernels for high-dimensional periodic approximation}

\author*{\fnm{Zhengjie} \sur{Sun}}\email{zhengjiesun@njust.edu.cn}
\affil{\orgdiv{School of Mathematics and Statistics},
\orgname{Nanjing University of Science and Technology},
\orgaddress{\city{Nanjing}, \postcode{210094}, \country{China}}}

\abstract{In this paper, we construct normalized periodic kernels from univariate functions satisfying suitable moment conditions and derive sufficient conditions for high-order convolution approximation.  Two computable quasi-interpolation schemes are obtained by discretizing the convolution integral using either a full tensor grid or random samples. In each case, the analysis separates the continuous convolution error from the error introduced by discretization. The contribution of kernel energy to the error exposes the dimensional dependence of tensor-product kernels and motivates our weight-adapted ANOVA constructions. Under suitable summability conditions on the weights, we establish convolution error and kernel energy estimates that yield dimension-uniform error bounds for the resulting quasi-interpolants. Numerical experiments corroborate the theoretical convergence rates and illustrate the effect of coordinate weighting on dimensional dependence.}

\keywords{Periodic kernel, moment condition, ANOVA decomposition, random sampling}
\pacs[MSC Classification (2020)]{Primary 65D15, 65D40; Secondary 41A35, 41A63, 65D32, 65C05}

\maketitle

\section{Introduction}
\label{sec:journal-introduction}

High-dimensional function approximation arises in uncertainty quantification \cite{smith_Book_2013},
parametric partial differential equations
\cite{cohen_2015ActaNumer_approximation}, molecular dynamics \cite{wang_2022NatComputSci_efficient}, and electronic-structure calculations \cite{kuhne_2020JCP_cp2k}. We consider the approximation of a periodic function \(f:\T^d\to\mathbb R\) from finitely many values
\(f(\bx_1),\ldots,f(\bx_M)\). Since a full tensor grid with \(n\) nodes per coordinate requires \(M=n^d\) evaluations, approximation in high
dimensions must exploit additional structure, such as mixed smoothness, anisotropy, or low-order coordinate interactions.
For functions with dominating mixed smoothness, hyperbolic-cross approximation selects Fourier frequencies subject to a product constraint \cite{dung_book2018}. Rank-1 and multiple rank-1 lattices provide structured sampling sets for the resulting trigonometric approximation problems
\cite{byrenheid_2017NM_tight,dick_book2022,Kammerer2013,KammererPottsVolkmer2015,kammerer_2019JAT_approximation,kuo_2021MCoM_function}. Sparse grids exploit mixed smoothness through combinations of anisotropic resolution levels \cite{AddyLatzTeckentrup2026,bungartz_2004Acta_sparse,gerstner_1998NumerAlgor_numerical,griebel_2009MCoM_optimized,GriebelSchneiderZenger1992}. Related approaches include approximation in Korobov spaces, B-spline quasi-interpolation, periodic Smolyak
operators, multilevel kernel methods, and periodic kernel interpolation \cite{Dung2016,DungMicchelli2013,KaarniojaEtAl2022,
KempfWendland2023,Kolomoitsev2022,ZengKritzerHickernell2009}.

Here we study kernel quasi-interpolation, which constructs a linear combination of kernel translates with coefficients obtained directly from sampled function values. Its construction therefore requires
no interpolation solve. The classical theory begins with spline quasi-interpolation \cite{deBoorFix1973} and extends to multivariate approximation by integer translates \cite{DynJacksonLevinRon1992}, kernel conditions ensuring prescribed approximation orders
\cite{CheneyLightXu1992}, and approximation from irregular point sets \cite{CheneyLei1994}. For radial kernels, approximation orders have been related to Fourier decay, polynomial reproduction, and generalized Strang--Fix conditions
\cite{NarcowichSunWard2007,wu_2005Adv_generalized}. Sparse grid and multilevel variants have also been developed \cite{FranzWendland2023,JeongKerseyYoon2021,KempfWendland2023,UstaLevesley2018}.

Error estimates for convolution-based quasi-interpolation must account for both approximation by convolution and discretization
of the integral. The convolution error is governed by the moment and decay properties of the kernel and the smoothness of the target
\cite{FranzWendland2023,GaoWangSunFasshauer2024,
sun_2025IMAJNA_scaled}. Replacing the integral by a sum over sampled values introduces a further error, which also depends on the sampling rule and quantitative bounds for the kernel
\cite{GaoFasshauerSunZhou2020,KolomoitsevLomakoTikhonov2023,KolomoitsevPrestin2021,sun_2026SINUM_monte,XiaoBoyd2014}. Hubbert, J\"ager, and Levesley establish convergence rates for Gaussian convolution and sparse grid combinations of anisotropic
tensor-product Gaussian convolution operators
\cite{HubbertJagerLevesley2023}. Although they discuss a discrete analogue, their convergence estimates do not include the additional
discretization error. Periodic full-grid and sparse grid quasi-interpolants have also been analyzed for a particular class of generalized Gaussian kernels \cite{GaoHuSunWang2026}. Our analysis treats both sources of error for a broader class of periodic kernels characterized by moment and decay conditions.

Independent random sampling allows arbitrary sample sizes, but does not eliminate the dependence of the error on dimension. For tensor-product kernels, the kernel energy contributes to the sampling variance and typically increases as the scale decreases or the
dimension grows \cite{GaoFasshauerSunZhou2020,sun_2026SINUM_monte}.
To control this dependence, we exploit coordinate importance and interaction structure. Weighted tensor-product spaces provide a standard framework for quantifying unequal coordinate influence \cite{WasilkowskiWozniakowski1999}. The tractability of approximation in weighted Korobov spaces has been studied in \cite{EbertPillichshammer2021,NovakSloanWozniakowski2004}, and constructive lattice algorithms have been developed for product, order-dependent, and more general weights \cite{CaiGoda2026,CaiGodaKazashi2026,CoolsKuoNuyensSloan2020,CoolsKuoNuyensSloan2021,KuoSloanWozniakowski2006}. ANOVA decompositions complement coordinate weights by resolving a function into components indexed by coordinate subsets. Restricting
the size of these subsets limits the interaction order while allowing all coordinates to remain active. ANOVA kernels express this decomposition in a reproducing kernel framework \cite{DurrandeEtAl2013}. In the periodic setting, sparse ANOVA support has been exploited in Fourier approximation \cite{PottsSchmischke2021} and hyperbolic wavelet regression \cite{LippertPottsUllrich2023}. Sampling inequalities and constructive approximation results for functions assembled from lower-dimensional
components are established in \cite{rieger_2024SINUM_approximability,RiegerWendland2026}. These results motivate the incorporation of coordinate weights and interaction structure directly into the quasi-interpolation kernel.

The paper makes two main contributions. First, we construct normalized periodic kernels from univariate functions satisfying prescribed moment and decay conditions and prove high-order
convolution estimates. We then derive separate discretization estimates for full tensor grids and independent uniform samples. For independent sampling, an exact mean-square error identity
separates the squared convolution bias from the sampling variance and makes the contribution of kernel energy explicit. Second, to control the dimensional dependence of this energy, we construct single-level and multilevel weight-adapted ANOVA kernels. Under appropriate summability conditions on the coordinate weights and a fixed maximal interaction order, we obtain convolution error
and kernel-energy estimates with constants independent of the ambient dimension. Together with the mean-square identity, these estimates yield dimension-uniform error bounds for independently
sampled quasi-interpolants on the specified weighted function classes. For the multilevel construction, the algebraic convergence exponent is also independent of the interaction order, although the
constants and logarithmic factors may depend on it. Numerical experiments test the approximation orders, the effect of coordinate weighting, and alternative discretizations of the convolution integral.

The remainder of the paper is organized as follows. Section~2 introduces the function spaces and kernel assumptions. Section~3 constructs the periodic kernels and analyzes the associated convolution operators. Section~4 derives the full-grid and random discretization estimates. Sections~5 and~6 develop the single-level and multilevel weight-adapted ANOVA methods,
respectively. Section~7 presents the numerical experiments. Section~8 concludes the paper.

\section{Preliminaries}
\label{sec:jfg-preliminaries}

We use the following conventions. Let \(\T^d:=\mathbb R^d/\mathbb Z^d\), identified with \([0,1)^d\) and equipped with normalized Lebesgue measure. Addition and subtraction
on \(\T^d\) are understood modulo \(\mathbb Z^d\). 
For \(\bk=(k_1,\ldots,k_d)\in\mathbb Z^d\), let
\(\chi_{\bk}(\boldsymbol x):=
\exp(2\pi\mathrm{i}\,\bk\cdot\boldsymbol x)\).
The family \((\chi_{\bk})_{\bk\in\mathbb Z^d}\) is an orthonormal basis of
\(L_2(\T^d)\). For \(f\in L_1(\T^d)\), the Fourier coefficient is
$$\widehat f(\bk):=
\int_{\T^d}f(\boldsymbol x)
\overline{\chi_{\bk}(\boldsymbol x)}\,d\boldsymbol x.$$
In particular, Parseval's identity gives
\(\|f\|_2^2=\sum_{\bk\in\mathbb Z^d}|\widehat f(\bk)|^2\) for
\(f\in L_2(\T^d)\). We use the standard notation
\(\langle t\rangle:=(1+t^2)^{1/2}\) for \(t\in\mathbb R\).

For \(1\leq p\leq\infty\), \(f\in L_p(\T^d)\), and
\(g\in L_1(\T^d)\), their periodic convolution is defined almost everywhere by
\begin{equation}
	(f*g)(\boldsymbol x)
	:=
	\int_{\T^d}
	f(\boldsymbol y)
	g(\boldsymbol x-\boldsymbol y)
	\,d\boldsymbol y.
	\label{eq:periodic-convolution}
\end{equation}
Young's inequality yields
\(\|f*g\|_p\leq\|f\|_p\|g\|_1\), so convolution with \(g\) defines a
bounded operator on \(L_p(\T^d)\). Moreover,
\(\widehat{f*g}(\bk)=\widehat f(\bk)\widehat g(\bk)\) for any
\(\bk\in\mathbb Z^d\). Thus, periodic convolution acts diagonally in the
Fourier basis, with multiplier sequence
\((\widehat g(\bk))_{\bk\in\mathbb Z^d}\).

\subsection{Function spaces}
\label{subsec:function-spaces}

For \(s>1/2\), the periodic Sobolev space of dominating mixed
smoothness is defined by
\begin{equation}
	H^s_{\mathrm{mix}}(\T^d)
	:=
	\Big\{
	f\in L_2(\T^d):
	\|f\|_{s,\mathrm{mix}}^2
	:=
	\sum_{\bk\in\Z^d}
	|\widehat f(\bk)|^2
	\prod_{j=1}^d \langle k_j\rangle^{2s}
	<\infty
	\Big\}.
	\label{eq:jfg-mixed-sobolev}
\end{equation}
Let
\(A_s:=\sum_{\ell\in\Z\setminus\{0\}}\langle\ell\rangle^{-2s}\).
Because \(s>1/2\), we have \(A_s<\infty\), and the
Cauchy--Schwarz inequality yields
\[
\sum_{\bk\in\Z^d}|\widehat f(\bk)|
\leq
(1+A_s)^{d/2}\|f\|_{s,\mathrm{mix}}.
\]
Consequently, the Fourier series of \(f\) converges absolutely and uniformly,
and \(H^s_{\mathrm{mix}}(\T^d)\hookrightarrow C(\T^d)\) with an
embedding constant that depends on \(d\).

To obtain estimates that are uniform in \(d\), we introduce product
weights that quantify the relative importance of the coordinate
directions. Let \([d]:=\{1,\ldots,d\}\), and, for
\(\bk\in\Z^d\), set
\(\supp\bk:=\{j\in[d]:k_j\neq0\}\). Throughout the paper, let
\(1\geq\gamma_1\geq\gamma_2\geq\cdots>0\) be a nonincreasing sequence
of coordinate weights, and 
$$G_\alpha:=\sum_{j=1}^\infty\gamma_j^{1/(2\alpha)}.$$
For \(\boldsymbol u\subseteq[d]\), write
\(\gamma_{\boldsymbol u}:=\prod_{j\in\boldsymbol u}\gamma_j\), with
the convention \(\gamma_{\emptyset}:=1\).

\begin{definition}
	\label{def:jdaq-weighted-korobov}
	Let \(s>1/2\). The product-weighted Korobov space
	\(H^s_{\mathrm{Kor},\boldsymbol\gamma}(\T^d)\) consists of all
	\(f\in L_2(\T^d)\) for which
	\begin{equation}
		\|f\|_{s,\boldsymbol\gamma}^2
		:=
		\sum_{\bk\in\Z^d}
		\gamma_{\supp\bk}^{-1}
		\prod_{j\in\supp\bk}
		\langle k_j\rangle^{2s}
		|\widehat f(\bk)|^2
		<\infty.
		\label{eq:jdaq-weighted-korobov-norm}
	\end{equation}
\end{definition}

%For fixed \(d\), the weighted and unweighted spaces coincide as
%sets, and their norms satisfy
%\[
%\|f\|_{s,\mathrm{mix}}
%\leq
%\|f\|_{s,\boldsymbol\gamma}
%\leq
%\gamma_{[d]}^{-1/2}\|f\|_{s,\mathrm{mix}}.
%\]

Fourier projections decompose the space according to
coordinate interactions. For \(\boldsymbol u\subseteq[d]\), let
\(P_{\boldsymbol u}\) denote the Fourier projection defined by
\[
\widehat{P_{\boldsymbol u}f}(\bk)
:=
\mathbf 1_{\{\supp\bk=\boldsymbol u\}}\widehat f(\bk),
\quad \bk\in\Z^d.
\]
The ranges of the projections \(P_{\boldsymbol u}\) are mutually
orthogonal in both \(L_2(\T^d)\) and
\(H^s_{\mathrm{Kor},\boldsymbol\gamma}(\T^d)\), and
\[
f=\sum_{\boldsymbol u\subseteq[d]}P_{\boldsymbol u}f.
\]
This Fourier decomposition coincides with the functional ANOVA
decomposition associated with normalized product measure on
\(\T^d\).
For \(q\in\mathbb N_0\), define
\(P_q:=\sum_{|\boldsymbol u|\leq q}P_{\boldsymbol u}\).
We adopt the convention \(P_q=I\) whenever \(q\geq d\). The
corresponding finite superposition space is
\begin{equation}
\begin{aligned}
	H^s_{\mathrm{Kor},\boldsymbol\gamma,q}(\T^d)
	&:=
	P_qH^s_{\mathrm{Kor},\boldsymbol\gamma}(\T^d)\\
	&=
	\left\{
	f\in H^s_{\mathrm{Kor},\boldsymbol\gamma}(\T^d):
	\widehat f(\bk)=0
	\text{ whenever }|\supp\bk|>q
	\right\}.
\end{aligned}
	\label{eq:jdaq-hard-superposition-space}
\end{equation}
Because \(P_q\) is orthogonal with respect to both the
\(L_2\)-inner product and the weighted Korobov inner product, it is
contractive on both spaces. In particular,
\(H^s_{\mathrm{Kor},\boldsymbol\gamma,q}(\T^d)\) is a closed subspace
of \(H^s_{\mathrm{Kor},\boldsymbol\gamma}(\T^d)\).

The following lemma makes the dependence of the
\(L_\infty\)-embedding constant on the product weights explicit.

\begin{lemma}
	\label{lem:jdaq-uniform-embedding}
	Let \(s>1/2\). Then, for every \(d\in\mathbb N\) and
	\(f\in H^s_{\mathrm{Kor},\boldsymbol\gamma}(\T^d)\), 
	\begin{equation}
		\|f\|_\infty
		\leq
		\Big(
		\prod_{j=1}^d(1+A_s\gamma_j)
		\Big)^{1/2}
		\|f\|_{s,\boldsymbol\gamma}.
		\label{eq:jdaq-finite-dimensional-embedding}
	\end{equation}
	Moreover, if \(\sum_{j=1}^\infty\gamma_j<\infty\), the
	canonical embeddings
	$\iota_{s,\boldsymbol\gamma}^{(d)}:
	H^s_{\mathrm{Kor},\boldsymbol\gamma}(\T^d)
	\longrightarrow C(\T^d)$
	are bounded uniformly in \(d\) with
	$$\sup_{d\in\mathbb N}
	\big\|\iota_{s,\boldsymbol\gamma}^{(d)}\big\|
	\leq
	\exp\big(
	\frac{A_s}{2}\sum_{j=1}^\infty\gamma_j
	\big).$$
	In particular, if \(G_\alpha<\infty\) with \(\alpha>1/2\), then
	\begin{equation}
		\|f\|_\infty
		\leq
		\exp\left(\frac{A_sG_\alpha}{2}\right)
		\|f\|_{s,\boldsymbol\gamma},
		\quad d\in\mathbb N.
		\label{eq:jdaq-uniform-embedding}
	\end{equation}
\end{lemma}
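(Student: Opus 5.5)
The plan is to bound the supremum norm by the $\ell_1$-norm of the Fourier coefficients and then apply Cauchy--Schwarz with the weighted Korobov weights. Since the bound will show that the Fourier series converges absolutely and uniformly, $f$ has a continuous representative, and for every $\bx$ we get $|f(\bx)|\le\sum_{\bk}|\widehat f(\bk)|$. For each $\bk$ write
\[
|\widehat f(\bk)|=\Big(\gamma_{\supp\bk}^{1/2}\prod_{j\in\supp\bk}\langle k_j\rangle^{-s}\Big)\Big(\gamma_{\supp\bk}^{-1/2}\prod_{j\in\supp\bk}\langle k_j\rangle^{s}|\widehat f(\bk)|\Big).
\]
Cauchy--Schwarz then gives $\sum_{\bk}|\widehat f(\bk)|\le S^{1/2}\|f\|_{s,\boldsymbol\gamma}$, where
\[
S:=\sum_{\bk\in\Z^d}\gamma_{\supp\bk}\prod_{j\in\supp\bk}\langle k_j\rangle^{-2s}.
\]

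The key step is to factor $S$. The summand is a product over $j\in[d]$ of univariate factors $a_j(k_j)$, where $a_j(0)=1$ and $a_j(k)=\gamma_j\langle k\rangle^{-2s}$ for $k\neq0$. Hence
\[
S=\prod_{j=1}^d\sum_{k\in\Z}a_j(k)=\prod_{j=1}^d(1+A_s\gamma_j).
\]
This proves \eqref{eq:jdaq-finite-dimensional-embedding}, and $A_s<\infty$ because $s>1/2$. The same absolute-convergence estimate shows that the trigonometric series converges uniformly, so $f$ is continuous and the canonical embedding into $C(\T^d)$ is well defined.

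For the uniform bound, use $1+t\le e^t$ to get
\[
\prod_{j=1}^d(1+A_s\gamma_j)^{1/2}\le\exp\Big(\frac{A_s}{2}\sum_{j\le d}\gamma_j\Big)\le\exp\Big(\frac{A_s}{2}\sum_{j=1}^\infty\gamma_j\Big),
\]
and this bound is independent of $d$.

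For \eqref{eq:jdaq-uniform-embedding}, it remains to show $\sum_j\gamma_j\le G_\alpha$. Since $\alpha>1/2$, we have $1/(2\alpha)<1$. Because $0<\gamma_j\le1$, it follows that $\gamma_j\le\gamma_j^{1/(2\alpha)}$, so $\sum_j\gamma_j\le G_\alpha<\infty$. Substituting this into the previous bound finishes the proof.

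The main obstacle is only the bookkeeping in the factorization of $S$: one must separate the cases $k_j=0$ and $k_j\ne0$ so that the product weights and the $\supp\bk$ convention ($\gamma_\emptyset=1$) match the product expansion. Everything else is routine.
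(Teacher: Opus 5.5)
Your proposal is correct and follows the paper's proof essentially step for step: you apply weighted Cauchy--Schwarz to the Fourier coefficients, factor the resulting sum coordinatewise by separating $k_j=0$ from $k_j\neq0$, use $1+t\le e^t$, and compare via $\gamma_j\le\gamma_j^{1/(2\alpha)}$. Nothing is missing.
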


\begin{proof}
	For \(f\in H^s_{\mathrm{Kor},\boldsymbol\gamma}(\T^d)\), the
	Cauchy--Schwarz inequality gives
	\begin{align*}
		\sum_{\bk\in\Z^d}|\widehat f(\bk)|
		&\leq
		\|f\|_{s,\boldsymbol\gamma}
		\Big(
		\sum_{\bk\in\Z^d}
		\gamma_{\supp\bk}
		\prod_{j\in\supp\bk}
		\langle k_j\rangle^{-2s}
		\Big)^{1/2}
		\\
		&=
		\|f\|_{s,\boldsymbol\gamma}
		\Big(
		\prod_{j=1}^d(1+A_s\gamma_j)
		\Big)^{1/2},
	\end{align*}
	where the last identity follows by separating the zero and nonzero
	frequencies in each coordinate. The Fourier series
	of \(f\) converges absolutely and uniformly and defines
	a continuous representative satisfying
	\eqref{eq:jdaq-finite-dimensional-embedding}.
	
	If \(\sum_{j=1}^\infty\gamma_j<\infty\), then
	\[
	\prod_{j=1}^d(1+A_s\gamma_j)
	\leq
	\exp\Big(A_s\sum_{j=1}^d\gamma_j\Big)
	\leq
	\exp\Big(A_s\sum_{j=1}^\infty\gamma_j\Big),
	\]
	which proves the dimension-uniform boundedness of the embeddings.
	
	Finally, let \(\alpha>1/2\) and \(G_\alpha<\infty\).
	Since \(0<\gamma_j\leq1\), we have
	\(\gamma_j\leq\gamma_j^{1/(2\alpha)}\). Hence
	\(\sum_{j=1}^\infty\gamma_j\leq G_\alpha\). Substituting it into the preceding exponential bound proves
	\eqref{eq:jdaq-uniform-embedding}.
\end{proof}

\subsection{Kernels}
We introduce the moment condition imposed on the univariate function from which the periodic kernels will be constructed. This differs from the radial formulation on $\mathbb R^d$ commonly used in the literature \cite{FranzWendland2023}.

\begin{definition}(Moment condition)
	\label{def:moment-conditions}
	Let $m=2p$, where $p\geq1$ is an integer. A bounded continuous function $\varphi\in L_1([0,\infty))$ is said to satisfy the moment condition of order $m$ if
	\begin{align}
		\int_0^\infty \varphi(t)\,dt
		&=1,
		\label{eq:jfg-profile-mass}\\
		\int_0^\infty \varphi(t)t^{2j}\,dt
		&=0,
		\quad 1\leq j\leq p-1,
		\label{eq:jfg-profile-vanishing-moments}\\
		\int_0^\infty |\varphi(t)|t^m\,dt
		&<\infty.
		\label{eq:jfg-profile-moments}
	\end{align}
\end{definition}

The first condition provides the normalization required for convolution approximation, whereas the remaining moment conditions ensure the cancellations necessary to achieve high-order approximation
\cite{deBoorFix1973,DynJacksonLevinRon1992,CheneyLightXu1992}.
High-order moment conditions can be imposed by finite linear combinations of dilated radial functions; see, for example, \cite{FranzWendland2023,GaoZhou2020}. A related Vandermonde cancellation argument for dilated zonal kernels was established in
\cite[Proposition~3.10]{sun_2025IMAJNA_scaled}. The following proposition adapts this construction to the present setting.

\begin{proposition}
	\label{prop:jfg-compact-moment-profile}
	Let \(m=2p\), where \(p\in\mathbb N\), and let
	\(\eta\in W^{2,1}(0,\infty)\) be real-valued. Assume that
    \begin{equation}\label{eq:moment_eq_assump}
        \mu_0:=\int_0^\infty\eta(t)\,dt\neq0 \quad \text{and}\quad \int_0^\infty|\eta(t)|t^m\,dt<\infty.
    \end{equation}
	Choose distinct numbers
	\(\delta_1,\ldots,\delta_p>0\), set \(\zeta_q:=\delta_q^2\), and define
	\begin{equation}
		\omega_q:=
		\prod_{\substack{1\leq v\leq p\\v\neq q}}
		\frac{-\zeta_v}{\zeta_q-\zeta_v},
		\quad
		\varphi(t):=\frac1{\mu_0}
		\sum_{q=1}^p\omega_q\delta_q^{-1}
		\eta\Bigl(\frac{t}{\delta_q}\Bigr).
		\label{eq:jfg-compact-profile}
	\end{equation}
	Then \(\varphi\in W^{2,1}(0,\infty)\) and satisfies the moment condition of order \(m\).
\end{proposition}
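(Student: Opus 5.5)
The plan is to compute the moments of \(\varphi\) directly by substitution and reduce the moment conditions to a Vandermonde-type identity for the weights \(\omega_q\). For each \(q\) and each integer \(j\geq0\), the change of variables \(t=\delta_q\tau\) gives
\[
\int_0^\infty \delta_q^{-1}\eta\Bigl(\frac{t}{\delta_q}\Bigr)t^{2j}\,dt
=\delta_q^{2j}\int_0^\infty\eta(\tau)\tau^{2j}\,d\tau
=\zeta_q^{\,j}\mu_{2j},
\]
where \(\mu_{2j}:=\int_0^\infty\eta(\tau)\tau^{2j}\,d\tau\). These moments are finite for \(0\le j\le p\) because \(|\tau|^{2j}\le 1+\tau^m\) and \(\eta\in L_1\), together with the assumption \eqref{eq:moment_eq_assump}. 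Consequently
\[
\int_0^\infty\varphi(t)t^{2j}\,dt=\frac{\mu_{2j}}{\mu_0}\sum_{q=1}^p\omega_q\zeta_q^{\,j}.
\]

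The key step is the identity \(\sum_{q=1}^p\omega_q\zeta_q^{\,j}=\delta_{j0}\) for \(0\le j\le p-1\). To prove it, I would observe that \(\omega_q=L_q(0)\), where \(L_q(z)=\prod_{v\ne q}(z-\zeta_v)/(\zeta_q-\zeta_v)\) is the Lagrange basis polynomial on the distinct nodes \(\zeta_1,\dots,\zeta_p\). These nodes are distinct because the \(\delta_q>0\) are distinct. Lagrange interpolation reproduces polynomials of degree at most \(p-1\) exactly, so \(\sum_q L_q(z)\zeta_q^{\,j}=z^j\). Evaluating at \(z=0\) gives \(\sum_q\omega_q\zeta_q^{\,j}=0^j=\delta_{j0}\). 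Taking \(j=0\) then yields \eqref{eq:jfg-profile-mass}, and \(1\le j\le p-1\) yields \eqref{eq:jfg-profile-vanishing-moments}. The absolute moment \eqref{eq:jfg-profile-moments} follows from the triangle inequality and the same substitution:
\[
\int_0^\infty|\varphi|t^m\,dt\le|\mu_0|^{-1}\sum_q|\omega_q|\delta_q^m\int_0^\infty|\eta|\tau^m\,d\tau<\infty.
\]

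It remains to check the regularity requirements. Dilation \(t\mapsto t/\delta_q\) preserves \(W^{2,1}(0,\infty)\), with derivatives scaled by powers of \(\delta_q^{-1}\), and a finite linear combination stays in the space, so \(\varphi\in W^{2,1}(0,\infty)\) and in particular \(\varphi\in L_1\). The definition also demands that \(\varphi\) be bounded and continuous. In one dimension \(W^{1,1}(0,\infty)\hookrightarrow C_b([0,\infty))\), since an absolutely continuous representative satisfies \(|u(t)|\le\|u\|_{W^{1,1}}\) (write \(u(t)=u(s)-\int_t^s u'\) and average over \(s\)). This embedding gives boundedness and continuity. I do not expect a real obstacle; the only point needing care is recognizing \(\omega_q\) as \(L_q(0)\), which makes the cancellation immediate rather than requiring an explicit Vandermonde inversion.
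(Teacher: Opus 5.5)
Your proposal is correct and follows essentially the same route as the paper: a change of variables reduces each even moment to $\mu_{2j}\mu_0^{-1}\sum_q\omega_q\zeta_q^{\,j}$, and recognizing $\omega_q$ as the Lagrange weights for evaluation at zero on the nodes $\zeta_q$ gives the required cancellation. The absolute $m$-th moment and $W^{2,1}$ membership then follow from the dilation structure, and your extra check that $\varphi$ is bounded and continuous via $W^{1,1}(0,\infty)\hookrightarrow C_b([0,\infty))$ fills in a requirement of the moment-condition definition that the paper leaves implicit.
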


\begin{proof}
	The coefficients $\omega_q$ are the Lagrange interpolation weights for evaluation at zero at the nodes
	$\zeta_1,\ldots,\zeta_p$. Consequently,
	\begin{equation}
		\sum_{q=1}^p\omega_q\zeta_q^j
		=
		\begin{cases}
			1, & j=0,\\
			0, & 1\leq j\leq p-1.
		\end{cases}
		\label{eq:jfg-compact-lagrange-identities}
	\end{equation}
	For $0\leq j\leq p-1$, let
	$\mu_{2j}:=\int_0^\infty\eta(t)t^{2j}\,dt$. These moments are absolutely convergent by \eqref{eq:moment_eq_assump} and the fact that $\eta\in L_1(0,\infty)$. A change of variables in each term of \eqref{eq:jfg-compact-profile} gives
	\[
	\int_0^\infty \varphi(t)t^{2j}\,dt
	=
	\frac{\mu_{2j}}{\mu_0}
	\sum_{q=1}^p\omega_q\delta_q^{2j}.
	\]
	Since $\delta_q^{2j}=\zeta_q^j$, identity
	\eqref{eq:jfg-compact-lagrange-identities} implies that
	\[
	\int_0^\infty \varphi(t)t^{2j}\,dt
	=
	\begin{cases}
		1, & j=0,\\
		0, & 1\leq j\leq p-1.
	\end{cases}
	\]
	Thus $\varphi$ has unit mass and its even moments of orders
	$2,4,\ldots,2p-2$ vanish.
	
	Because $\varphi$ is a finite linear combination of dilates of $\eta$, it belongs to $W^{2,1}(0,\infty)$. The integrability assumption on $\eta$ also yields
	$\int_0^\infty|\varphi(t)|t^{2p}\,dt<\infty$. Thus $\varphi$
	satisfies all the requirements of
	Definition~\ref{def:moment-conditions}.
\end{proof}

%\begin{remark}
%	%	The minimum in Proposition~\ref{prop:jfg-minimum-energy-profile} concerns
%	%	the leading variance constant.  It need not minimize the consistency
%	%	constant, which also depends on the first nonvanishing moment and on an
%	%	absolute \(2p\)-th moment of the profile.  A profile should therefore be
%	%	compared through the resulting bias--variance curve, not through its
%	%	\(L_2\)-energy alone.
%	A concrete example is provided by the $C^2$ Wendland function on
%	$\mathbb{R}^2$,
%	\[
%	\eta(t)=(1-t)_+^4(4t+1),
%	\]
%	for which $\mu_0=1/3$. If $p=1$ and $\delta_1=1$, then
%	\eqref{eq:jfg-compact-profile} gives $\varphi=3\eta$, which satisfies
%	the moment conditions of order two. If $p=2$, $\delta_1=1$, and
%	$\delta_2=2$, then
%	\[
%	\varphi(t)=4\eta(t)-\frac12\eta(t/2),
%	\]
%	which satisfies the moment conditions of order four.
%\end{remark}

Throughout the paper, $c,C>0$ denote constants that may depend on the fixed kernel, smoothness parameters, superposition order and weight, but are independent of the dimension, sample size, and scale, unless indicated otherwise by a subscript. Their values may change from one occurrence to the next. For
nonnegative quantities $A$ and $B$, we write $A\lesssim_d B$ if
$A\leq C_dB$. We define $\gtrsim_d$ and $\asymp_d$
analogously. Additional subscripts indicate dependence on other fixed
parameters.

\section{Periodic convolution approximation}
\label{sec:periodic_convolution}

Restricting Euclidean radial basis functions to spheres provides a standard construction for spherical basis functions \cite{NarcowichSunWard2007}. In particular, under angular parametrization of the unit circle, a radial kernel on $\mathbb{R}^{2}$ becomes a univariate periodic kernel. Taking coordinatewise tensor products of such kernels then yields multivariate periodic kernels on the torus. In this section, we first study the moment and decay conditions below, which determine when these restricted kernels form an approximate identity, and then discuss the tensor-product structure that enables periodic convolution approximation.

\subsection{Restricted kernels}
\label{subsec:jfg-circle-restriction}

%The relationship between the Fourier transform of a radial function and the spherical harmonic expansion coefficients of its restriction was investigated in .

We identify the one-dimensional torus $\mathbb{T}$
with the unit circle through the parametrization
$\boldsymbol{e}(x):=(\cos(2\pi x),\sin(2\pi x))$. For $x,y\in\mathbb{T}$,
the corresponding chordal distance satisfies
$$\|\boldsymbol{e}(x)-\boldsymbol{e}(y)\|_2
=
2\bigl|\sin\bigl(\pi(x-y)\bigr)\bigr|.$$
Viewing $\varphi$ as a radial function on $\mathbb{R}^2$, consider the scaled radial kernel
\[
(\boldsymbol{u},\boldsymbol{v})
\longmapsto
\rho^{-2}
\varphi\left(
\rho^{-1}
\lVert\boldsymbol{u}-\boldsymbol{v}\rVert_2
\right).
\]
Its restriction to the unit circle depends only on $x-y$ and is
translation invariant on $\mathbb{T}$. For $0<\rho\leq\rho_0$, let
\begin{equation}
	A_\rho:=
	\int_0^{2/\rho}
	\frac{\varphi(t)}
	{\sqrt{1-\rho^2t^2/4}}\,dt.
	\label{eq:jfg-circle-normalizing-factor}
\end{equation}
Whenever $A_\rho\neq 0$, we define the normalized restriction by
\begin{equation}
	\psi_\rho(x):=
	\frac{\pi}{\rho A_\rho}
	\varphi\left(
	\frac{2|\sin(\pi x)|}{\rho}
	\right),
	\quad x\in\mathbb{T}.
	\label{eq:jfg-circle-kernel}
\end{equation}
The change of variables used below shows that
$\int_{\mathbb{T}}\psi_\rho(x)\,dx=1$. 
Its Fourier coefficients are
\begin{equation}
	\widehat\psi_\rho(\ell)
	=\frac1{A_\rho}
	\int_0^{2/\rho}
	\frac{\varphi(t)
		\cos\bigl(2\ell\arcsin(\rho t/2)\bigr)}
	{\sqrt{1-\rho^2t^2/4}}\,dt.
	\label{eq:jfg-circle-multiplier}
\end{equation}

The subsequent analysis relies on two properties of the Fourier coefficients $\widehat{\psi}_\rho(\ell)$: a low-frequency estimate quantifying their approximate identity behavior and a uniform decay bound. These requirements are formalized in the following assumptions.

\begin{assumption}
	\label{ass:jfg-kernel}
	Let $m\geq 1$ and $0<\rho_0\leq1$. For any
	$0<\rho\leq\rho_0$, let $\psi_\rho\in L_1(\T)$ be defined as in \eqref{eq:jfg-circle-kernel}. Suppose that there exists a constant $c_1>0$, independent of $\rho$ and $\ell$, such that $\widehat\psi_\rho(0)=1$ and
	\begin{equation}
		\label{eq:jfg-kernel-assumption}
		\bigl|1-\widehat\psi_\rho(\ell)\bigr|
		\leq c_1
		\min\bigl\{1,(\rho|\ell|)^m\bigr\},
		\quad \ell\in\Z.
		%\\
		%\bigl|\widehat\psi_\rho(\ell)\bigr|.
		%&\leq c_2(1+\rho|\ell|)^{-2\kappa},
		%\quad \ell\in\Z.
		%\label{eq:jfg-kernel-decay}
	\end{equation}
\end{assumption}

\begin{assumption}
	\label{ass:jfg-kernel1}
	Let $\kappa>0$ and $0<\rho_0\leq1$. Suppose that there exists a constant $c_2>0$, independent of $\rho$ and $\ell$, such that
	$$|\widehat\psi_\rho(\ell)|
	\leq c_2(1+\rho|\ell|)^{-2\kappa},
	\quad \ell\in\Z.$$
\end{assumption}

Assumption~\ref{ass:jfg-kernel} characterizes a fundamental kernel property required for the convergence analysis of convolution approximation.
Although this assumption was originally formulated in the spherical setting \cite{sun_2025IMAJNA_scaled,sun_2026SINUM_monte}, it has been verified only for some specific kernels. Here, we establish a general sufficient condition in the periodic setting. We prove that, whenever the underlying radial function $\varphi$ satisfies the
prescribed moment and decay conditions, its restriction $\psi_\rho$ defined in \eqref{eq:jfg-circle-kernel} satisfies Assumption~\ref{ass:jfg-kernel}. Then, the kernel $\psi_\rho$ can be employed to construct convolution operators.

%Let
%$\boldsymbol e(x):=(\cos(2\pi x),\sin(2\pi x))$.
%The chordal distance between two points on the unit circle satisfies
%	$\|\boldsymbol e(x)-\boldsymbol e(y)\|_2
%	=2\big|\sin\bigl(\pi(x-y)\bigr)\big|$.
%Consequently, restriction of the scaled radial kernel
%$\rho^{-2}
%\varphi\left(\rho^{-1}\|\boldsymbol u-\boldsymbol v\|_2\right)$
%to the circle yields a periodic translation-invariant kernel. For
%$0<\rho\leq\rho_0$, we define the one-dimensional periodic kernel
%\begin{equation}
%	\psi_\rho(x):=
%	\frac{\pi}{\rho A_\rho}
%	\varphi\left(
%	\frac{2|\sin(\pi x)|}{\rho}
%	\right),
%	\label{eq:jfg-circle-kernel}
%\end{equation}
%where
%\begin{equation}
%	A_\rho:=
%	\int_0^{2/\rho}
%	\frac{\varphi(t)}
%	{\sqrt{1-\rho^2t^2/4}}\,dt.
%	\label{eq:jfg-circle-normalizing-factor}
%\end{equation}
%The integral in \eqref{eq:jfg-circle-normalizing-factor} is understood as an
%improper integral at $2/\rho$. The unnormalized circle restriction has mass
%$A_\rho/(\pi\rho)$; hence the normalization in
%\eqref{eq:jfg-circle-kernel} replaces the two-dimensional factor
%$\rho^{-2}$ by the appropriate one-dimensional scaling factor.

\begin{theorem}
	\label{thm:jfg-normalized-circle-kernel} 
    Let $m=2p$, and let $\varphi$ satisfy the moment condition of order $m$. Assume further that there exist constants $\varepsilon>0$ and $C_\varphi>0$ such that
	\begin{equation}
		|\varphi(t)|
		\leq C_\varphi
		(1+t)^{-m-1-\varepsilon},
		\quad t\geq0.
		\label{eq:jfg-circle-decay-assumption}
	\end{equation}
    Then the kernel
$\psi_\rho$ belongs to $L_1(\mathbb{T})$ and satisfies
Assumption~\ref{ass:jfg-kernel} with order $m$. More precisely, we have $\widehat\psi_\rho(0)=1$ and
    $$\bigl|1-\widehat\psi_\rho(\ell)\bigr|
		\leq C
		\min\bigl\{1,(\rho|\ell|)^m\bigr\},
		\quad \ell\in\Z.$$
\end{theorem}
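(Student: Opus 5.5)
The plan is to work directly from the explicit formulas \eqref{eq:jfg-circle-normalizing-factor}–\eqref{eq:jfg-circle-multiplier}. First I verify them by the substitution $t=2\sin(\pi x)/\rho$ on $x\in[0,1/2]$, using evenness of $\psi_\rho$ about $0$ and $1/2$. This gives
\[
dx=\frac{\rho\,dt}{2\pi\sqrt{1-\rho^2t^2/4}} .
\]
Hence $\int_{\T}|\psi_\rho|\le \frac{1}{|A_\rho|}\int_0^{2/\rho}|\varphi(t)|(1-\rho^2t^2/4)^{-1/2}dt$, $\int_\T\psi_\rho=1$, and $\widehat\psi_\rho(\ell)$ has the stated form, since $\pi x=\arcsin(\rho t/2)$. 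The right-hand side of that $L_1$ bound is finite because:
\begin{itemize}
\item the square-root singularity at $t=2/\rho$ is integrable;
\item on $[1/\rho,2/\rho]$ the decay \eqref{eq:jfg-circle-decay-assumption} gives $|\varphi|\le C\rho^{m+1+\varepsilon}$, while $\int_{1/\rho}^{2/\rho}(1-\rho^2t^2/4)^{-1/2}dt=O(1/\rho)$.
\end{itemize}
Thus the tail region contributes $O(\rho^{m+\varepsilon})$.

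Next I show $A_\rho\to1$, so that after shrinking $\rho_0$ we have $A_\rho\ge 1/2$. Writing $w(u):=(1-u^2/4)^{-1/2}$, I use
\[
A_\rho-1=\int_0^{1/\rho}\varphi(t)\bigl(w(\rho t)-1\bigr)dt+\int_{1/\rho}^{2/\rho}\varphi(t)\bigl(w(\rho t)-1\bigr)dt-\int_{2/\rho}^\infty\varphi(t)\,dt .
\]
The three terms are bounded as follows.
\begin{itemize}
\item On $[0,1/\rho]$ we have $|w(u)-1|\le Cu^2$, so the first term is $\le C\rho^2\int|\varphi|t^2\le C\rho^2$, using $m\ge2$.
\item The other two terms are $O(\rho^{m+\varepsilon})$ by the tail estimate above.
\end{itemize}
This also gives $|\widehat\psi_\rho(\ell)|\le C$ uniformly, hence $|1-\widehat\psi_\rho(\ell)|\le C$, which handles the case $\rho|\ell|\ge1$ in the $\min$.

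The main step is the case $\ell\ne0$, $\rho|\ell|<1$, where I must extract $(\rho|\ell|)^m$ from the moment cancellations. Write
\[
1-\widehat\psi_\rho(\ell)=\frac1{A_\rho}\int_0^{2/\rho}\varphi(t)\,F_\ell(\rho t)\,dt,\qquad F_\ell(u):=\frac{1-\cos(2\ell\arcsin(u/2))}{\sqrt{1-u^2/4}} .
\]
I split the integral at $t=1/\rho$. On $[1/\rho,2/\rho]$ we have $|F_\ell|\le 2w$, so this piece is $O(\rho^{m+\varepsilon})\le C(\rho|\ell|)^m$ because $|\ell|\ge1$.

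On $u\in[0,1]$ the function $F_\ell$ is smooth and even, with $F_\ell(0)=0$. By Faà di Bruno, since $\arcsin(u/2)$ has bounded derivatives on $[0,1]$, every $k$-th derivative of $\cos(2\ell\arcsin(u/2))$ is bounded by $C_k|\ell|^k$ for $|\ell|\ge1$. Taylor's formula then gives
\[
F_\ell(u)=\sum_{j=1}^{p-1}a_j u^{2j}+R_\ell(u),\qquad |a_j|\le C|\ell|^{2j},\qquad |R_\ell(u)|\le C|\ell|^m u^m .
\]
The remainder contributes $\le C(\rho|\ell|)^m\int|\varphi|t^m$. For each polynomial term I use the vanishing moments \eqref{eq:jfg-profile-vanishing-moments}:
\[
\int_0^{1/\rho}\varphi(t)\,t^{2j}\,dt=-\int_{1/\rho}^\infty\varphi(t)\,t^{2j}\,dt=O\bigl(\rho^{m+\varepsilon-2j}\bigr).
\]
Hence each such term is $\le C|\ell|^{2j}\rho^{m+\varepsilon}\le C(\rho|\ell|)^m$, since $|\ell|^{2j}\le|\ell|^m$ for $j\le p-1$.

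Dividing by $A_\rho\ge1/2$ finishes the proof. The delicate point is making the derivative bounds on $F_\ell$ uniform in $\ell$ with exactly the power $|\ell|^k$. The Chebyshev identity $\cos(2\ell\arcsin u)=(-1)^\ell T_{2\ell}(u)$ is a possible alternative route, but I would use the Faà di Bruno argument.
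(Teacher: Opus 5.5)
Your proposal is correct and follows essentially the same route as the paper. Both arguments use the change of variables for normalization and the uniform $L_1$ bound, $A_\rho\to1$ (you even get the quantitative rate $A_\rho=1+O(\rho^2)$), and the even Taylor expansion of $F_\ell$ with coefficients $O(|\ell|^{2j})$ and remainder $O(|\ell|^m u^m)$, combined with the truncated vanishing moments and the decay tail estimate. The only cosmetic difference is where the tail is handled: you split at $t=1/\rho$ and bound $[1/\rho,2/\rho]$ using $|F_\ell|\le 2w$, whereas the paper extends the weighted remainder bound to all of $[0,2)$ and truncates the moment integrals at $2/\rho$.
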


The proof relies on the following lower bound for the normalization constant $A_\rho$.
\begin{lemma}
\label{lem:Arho}
	Under the assumptions of Theorem \ref{thm:jfg-normalized-circle-kernel}, let $A_\rho$ be given by \eqref{eq:jfg-circle-normalizing-factor}. Then there exists $\rho_0\in(0,1]$ such that
	$$A_\rho\geq\frac12,\quad 0<\rho\leq\rho_0.$$
\end{lemma}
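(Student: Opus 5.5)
The plan is to show that $A_\rho\to\int_0^\infty\varphi(t)\,dt=1$ as $\rho\to0^+$ and then pick $\rho_0$ so small that $|A_\rho-1|\le 1/2$. We write
\[
A_\rho-1=\int_0^{2/\rho}\varphi(t)\Bigl(\frac{1}{\sqrt{1-\rho^2t^2/4}}-1\Bigr)dt-\int_{2/\rho}^\infty\varphi(t)\,dt,
\]
using the unit mass condition \eqref{eq:jfg-profile-mass}. The tail term is bounded by $C_\varphi\int_{2/\rho}^\infty(1+t)^{-m-1-\varepsilon}dt\lesssim\rho^{m+\varepsilon}$, which follows from the decay assumption \eqref{eq:jfg-circle-decay-assumption}.

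For the first integral we substitute $t=(2/\rho)\sin\theta$, $\theta\in[0,\pi/2)$. Then $dt/\sqrt{1-\rho^2t^2/4}=(2/\rho)\,d\theta$, and the integral becomes
\[
\frac{2}{\rho}\int_0^{\pi/2}\varphi\Bigl(\tfrac{2}{\rho}\sin\theta\Bigr)(1-\cos\theta)\,d\theta .
\]
This form removes the endpoint singularity. We split the $\theta$-range at $\theta=\pi/4$.

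On $[0,\pi/4]$ we use the elementary bound $\frac{1}{\sqrt{1-u^2}}-1\le Cu^2$ for $u=\rho t/2\le 1/\sqrt2$. This bounds the contribution by
\[
C\rho^2\int_0^{\sqrt2/\rho}|\varphi(t)|\,t^2\,dt .
\]
Since $m\ge2$, the moment condition \eqref{eq:jfg-profile-moments} and boundedness of $\varphi$ give $\int_0^\infty|\varphi|t^2<\infty$, so this piece is $O(\rho^2)$.

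On $[\pi/4,\pi/2]$ we have $t\ge\sqrt2/\rho$, so $|\varphi(t)|\le C_\varphi(\sqrt2/\rho)^{-m-1-\varepsilon}$. The $\theta$-integral has bounded length and the factor $1-\cos\theta\le1$, so this piece is at most $C\rho^{-1}\rho^{m+1+\varepsilon}=C\rho^{m+\varepsilon}$.

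Combining the three estimates gives $|A_\rho-1|\le C(\rho^2+\rho^{m+\varepsilon})\le C'\rho^2$ for $\rho\le1$. We then choose $\rho_0=\min\{1,(2C')^{-1/2}\}$, which yields $A_\rho\ge 1/2$ on $(0,\rho_0]$.

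The main obstacle is controlling the integrable singularity of $(1-\rho^2t^2/4)^{-1/2}$ at $t=2/\rho$. A direct estimate would blow up there. The substitution $t=(2/\rho)\sin\theta$, combined with the decay of $\varphi$ near $t\sim1/\rho$, is what handles it, so this is the step where care is needed. The remaining steps are routine.
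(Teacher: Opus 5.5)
Your proof is correct. It shares the paper's overall skeleton: split the range near $t\sim1/\rho$, show the lower part tends to $1$, and control the upper part with the decay assumption \eqref{eq:jfg-circle-decay-assumption}. The difference is how you handle the lower part.

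The paper splits at $1/\rho$, bounds the weight by $(1-\rho^2t^2/4)^{-1/2}\le 2/\sqrt3$ there, and applies dominated convergence to get $\int_0^{1/\rho}\to\int_0^\infty\varphi=1$ qualitatively. You instead subtract the limit and use $(1-u^2)^{-1/2}-1\le Cu^2$ for $u\le1/\sqrt2$, together with the absolute second moment (finite because $m\ge2$). This gives the quantitative estimate $|A_\rho-1|\le C(\rho^2+\rho^{m+\varepsilon})\le C'\rho^2$. It also yields an explicit $\rho_0$ in terms of $C_\varphi$, $m$, $\varepsilon$ and $\int_0^\infty|\varphi(t)|t^2\,dt$.

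The paper's route needs only $\varphi\in L_1$ on the lower range, but it gives no rate and no explicit $\rho_0$. Yours uses one extra moment, which the hypotheses supply anyway, and buys a convergence rate.

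On what you call the main obstacle: the substitution $t=(2/\rho)\sin\theta$ is convenient but not necessary. The singularity at $t=2/\rho$ is integrable, and a direct estimate does not blow up. The paper simply substitutes $u=\rho t$ on $[1/\rho,2/\rho]$ and uses $\int_1^2u^{-m-1-\varepsilon}(1-u^2/4)^{-1/2}\,du<\infty$. This gives the same $C\rho^{m+\varepsilon}$ bound you obtain.
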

\begin{proof}
    The decay assumption \eqref{eq:jfg-circle-decay-assumption} implies that all integrals below are absolutely convergent. We first prove that
	$A_\rho\rightarrow1$ as $\rho\downarrow0$. Split the integral in \eqref{eq:jfg-circle-normalizing-factor} at $1/\rho$. On $[0,1/\rho]$, we have
    $\frac{1}
	{\sqrt{1-\rho^2t^2/4}}\leq \frac{2}{\sqrt{3}}$.
    Hence, after extending the integrand by zero to $(1/\rho,\infty)$, dominated convergence gives
    $$\int_0^{1/\rho}
	\frac{\varphi(t)}
	{\sqrt{1-\rho^2t^2/4}}\,dt\rightarrow\int_0^\infty\varphi(t)\,dt=1.$$
    For $[1/\rho,2/\rho]$, the substitution $u=\rho t$ and \eqref{eq:jfg-circle-decay-assumption} yield
    $$\int_{1/\rho}^{2/\rho}
	\frac{|\varphi(t)|}
	{\sqrt{1-\rho^2t^2/4}}\,dt\leq C\rho^{m+\varepsilon}\int_{1}^2\frac{u^{-m-1-\varepsilon}}{\sqrt{1-u^2/4}}\,du\leq C\rho^{m+\varepsilon}.$$
    Thus $A_\rho\rightarrow1$. Reducing $\rho_0$ if necessary, we obtain $A_\rho\geq\frac12$ for $0<\rho\leq \rho_0$.
\end{proof}

\begin{proof}[\textbf{Proof of Theorem \ref{thm:jfg-normalized-circle-kernel}}]
	By symmetry and the change of variables
	$t=2\sin(\pi x)/\rho$ on $[0,1/2]$, we obtain
	\[
	\int_\T\psi_\rho(x)\,dx
	=\frac1{A_\rho}
	\int_0^{2/\rho}
	\frac{\varphi(t)}
	{\sqrt{1-\rho^2t^2/4}}\,dt
	=1.
	\]
	Applying the estimate in Lemma~\ref{lem:Arho} gives
	$\|\psi_\rho\|_{L_1(\T)}\leq C$.
	
	For $\ell\neq0$, we define
	\[
	H_\ell(z):=
	\frac{
		1-\cos\bigl(2\ell\arcsin(z/2)\bigr)
	}{
		\sqrt{1-z^2/4}
	},
	\quad 0\leq z<2.
	\]
	The function $H_\ell$ is even and smooth in a neighborhood of the origin. On the interval $0\leq z\leq1$, the derivatives of $\arcsin(z/2)$ and $(1-z^2/4)^{-1/2}$ are uniformly bounded. Repeated differentiation of the cosine shows that the Taylor polynomial of $H_\ell$ at zero
	through degree $m-1$ has the form
	\[
	\sum_{j=1}^{p-1} b_{j,\ell}z^{2j},
	\quad \text{with}\quad
	|b_{j,\ell}|\leq C_m|\ell|^{2j}.
	\]
	For $|\ell|\geq1$, Taylor's theorem then gives
	$\big|
	H_\ell(z)-\sum_{j=1}^{p-1}b_{j,\ell}z^{2j}
	\big|
	\leq C_m|\ell|^m z^m$ for $0\leq z\leq1$.
    
	For $1<z<2$, we have $
	|H_\ell(z)|
	\leq \frac{2}{\sqrt{1-z^2/4}}$.
	Since $|\ell|\geq1$ and $z\geq1$,
	\[
	\Big|
	\sum_{j=1}^{p-1}b_{j,\ell}z^{2j}
	\Big|
	\leq C_m|\ell|^m z^m
	\leq
	\frac{C_m|\ell|^m z^m}{\sqrt{1-z^2/4}}.
	\]
	Hence, combining the two ranges yields
	\begin{equation}
		\Big|
		H_\ell(z)-\sum_{j=1}^{p-1}b_{j,\ell}z^{2j}
		\Big|
		\leq
		\frac{C_m|\ell|^m z^m}{\sqrt{1-z^2/4}},
		\quad 0\leq z<2.
		\label{eq:jfg-circle-taylor-remainder}
	\end{equation}

%	The function $H_\ell$ is even and smooth in a neighborhood of the
%	origin. The derivatives of \(\arcsin(z/2)\) and
%	\((1-z^2/4)^{-1/2}\) are bounded on this interval, and repeated
%	differentiation of the cosine contributes at most \(C_m|\ell|^m\). Thus, Taylor’s theorem at zero shows that its polynomial through
%	degree $m-1$ has the form
%	$$\sum_{j=1}^{p-1}b_{j,\ell}z^{2j},\quad 
%	\text{with}\quad
%	|b_{j,\ell}|\leq C_m|\ell|^{2j}.$$
%	Moreover, for $|\ell|\geq 1$, its remainder satisfies
%	\begin{equation}
%		\Big|
%		H_\ell(z)
%		-\sum_{j=1}^{p-1}b_{j,\ell}z^{2j}
%		\Big|
%		\leq
%		\frac{C_m|\ell|^m z^m}
%		{\sqrt{1-z^2/4}},
%		\quad 0\leq z<2.
%		\label{eq:jfg-circle-taylor-remainder}
%	\end{equation}
	Using \eqref{eq:jfg-circle-multiplier}, we write
	\[
	1-\widehat\psi_\rho(\ell)
	=\frac1{A_\rho}
	\int_0^{2/\rho}
	\varphi(t)H_\ell(\rho t)\,dt.
	\]
	The moment conditions hold on $(0,\infty)$, whereas the above integral is truncated at $2/\rho$. Hence, for
	$1\leq j\leq p-1$, we have
	$\int_0^{2/\rho}t^{2j}\varphi(t)\,dt=-\int_{2/\rho}^\infty t^{2j}\varphi(t)\,dt$.
	The decay assumption \eqref{eq:jfg-circle-decay-assumption} gives
	\begin{align*}
		\int_{2/\rho}^\infty t^{2j}|\varphi(t)|\,dt
		&\leq
		C\int_{2/\rho}^\infty
		t^{2j-m-1-\varepsilon}\,dt
		\leq
		C\rho^{m+\varepsilon-2j}.
	\end{align*}
	It follows that
	\begin{align*}
		\rho^{2j}|b_{j,\ell}|
		\Big|
		\int_{2/\rho}^\infty
		t^{2j}\varphi(t)\,dt
		\Big|
		&\leq
		C\rho^{m+\varepsilon}|\ell|^{2j}
		\leq
		C\rho^m|\ell|^m,
	\end{align*}
	where we used $\rho\leq1$, $|\ell|\geq1$, and $2j<m$.
	
	It remains to estimate the Taylor remainder. Equation
	\eqref{eq:jfg-circle-taylor-remainder} leads to
	\begin{align*}
		\int_0^{2/\rho}
		|\varphi(t)|
		\Big|
		H_\ell(\rho t)
		-\sum_{j=1}^{p-1}
		b_{j,\ell}(\rho t)^{2j}
		\Big|dt
		\leq
		C(\rho|\ell|)^m
		\int_0^{2/\rho}
		\frac{t^m|\varphi(t)|}
		{\sqrt{1-\rho^2t^2/4}}\,dt.
	\end{align*}
	The last integral is bounded uniformly in $\rho$.
	On $[0,1/\rho]$, the denominator bound gives
	\[
	\int_0^{1/\rho}
	\frac{t^m|\varphi(t)|}
	{\sqrt{1-\rho^2t^2/4}}\,dt
	\leq
	\frac2{\sqrt3}
	\int_0^\infty t^m|\varphi(t)|\,dt
	\leq C.
	\]
	On $[1/\rho,2/\rho]$, the decay assumption \eqref{eq:jfg-circle-decay-assumption} gives
	\begin{align*}
		\int_{1/\rho}^{2/\rho}
		\frac{t^m|\varphi(t)|}
		{\sqrt{1-\rho^2t^2/4}}\,dt
		&\leq
		C\int_{1/\rho}^{2/\rho}
		\frac{t^{-1-\varepsilon}}
		{\sqrt{1-\rho^2t^2/4}}\,dt
		\leq C\rho^\varepsilon
		\leq C,
	\end{align*}
	which implies that
	\[
	\int_0^{2/\rho}
	|\varphi(t)|
	\Big|
	H_\ell(\rho t)
	-\sum_{j=1}^{p-1}
	b_{j,\ell}(\rho t)^{2j}
	\Big|dt
	\leq C(\rho|\ell|)^m.
	\]
	
	Combining these estimates and using
	$A_\rho\geq1/2$, we obtain
	$\bigl|1-\widehat\psi_\rho(\ell)\bigr|
	\leq C(\rho|\ell|)^m$.
	Finally, the uniform $L_1$ bound yields
	$\bigl|1-\widehat\psi_\rho(\ell)\bigr|
	\leq 1+\|\psi_\rho\|_{L_1(\T)}
	\leq C$, which completes the proof.
\end{proof}

\begin{remark}
	The approach in Theorem~\ref{thm:jfg-normalized-circle-kernel} can be extended naturally from the circle $\mathbb{T}=\mathbb{S}^1$ to spheres $\mathbb{S}^d$, $d\geq1$. This extension would allow the corresponding
	spherical kernel assumption to be verified through suitable moment and decay conditions on the underlying radial function.
	%Once established, such a result would extend the framework of \cite{SunGaoSun2025} beyond the examples treated there.
\end{remark}

The next  theorem characterizes the $L_2$-energy of the scaled periodic kernel.
\begin{theorem}
	\label{cor:jfg-circle-kernel-energy}
	Under the assumptions of
	Theorem~\ref{thm:jfg-normalized-circle-kernel}, the kernel $\psi_\rho$ satisfies
	\begin{equation}
		\|\psi_\rho\|_{L_2(\T)}^2
		=
		\frac{\pi}{\rho A_\rho^2}
		\int_0^{2/\rho}
		\frac{|\varphi(t)|^2}
		{\sqrt{1-\rho^2t^2/4}}\,dt
		\asymp\rho^{-1}.
		\label{eq:jfg-circle-energy}
	\end{equation}
\end{theorem}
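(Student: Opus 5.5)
The plan has two parts: first derive the exact identity by the same change of variables used in the proof of Theorem~\ref{thm:jfg-normalized-circle-kernel}, then bound the resulting integral above and below uniformly in $\rho$, using Lemma~\ref{lem:Arho} to control $A_\rho$.

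\textbf{Identity.} I start from \eqref{eq:jfg-circle-kernel} and write
\[
\|\psi_\rho\|_{L_2(\T)}^2=\frac{\pi^2}{\rho^2A_\rho^2}\int_\T\Bigl|\varphi\Bigl(\frac{2|\sin(\pi x)|}{\rho}\Bigr)\Bigr|^2dx .
\]
By evenness, the integral over $\T$ is twice the integral over $[0,1/2]$. There I substitute $t=2\sin(\pi x)/\rho$, so that $dt=(2\pi/\rho)\cos(\pi x)\,dx$ and $\cos(\pi x)=\sqrt{1-\rho^2t^2/4}$. This gives
\[
dx=\frac{\rho}{2\pi}\,\frac{dt}{\sqrt{1-\rho^2t^2/4}},
\]
so the integral equals $\frac{\rho}{\pi}\int_0^{2/\rho}|\varphi(t)|^2(1-\rho^2t^2/4)^{-1/2}\,dt$. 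Multiplying by the prefactor produces exactly the stated formula. The improper endpoint $t=2/\rho$ causes no difficulty, since $\varphi$ is bounded and the singularity $(1-u^2/4)^{-1/2}$ is integrable.

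\textbf{Two-sided bound.} Set $J_\rho:=\int_0^{2/\rho}|\varphi(t)|^2(1-\rho^2t^2/4)^{-1/2}\,dt$. It remains to show that $J_\rho/A_\rho^2\asymp1$ for $0<\rho\le\rho_0$.
\begin{itemize}
\item For $A_\rho$, Lemma~\ref{lem:Arho} gives $A_\rho\ge1/2$. For an upper bound, I bound $|A_\rho|\le\frac{2}{\sqrt3}\|\varphi\|_{L_1}+C\rho^{m+\varepsilon}$ by the same split at $1/\rho$ used in that lemma. Hence $A_\rho^2\asymp1$.
\item For the upper bound on $J_\rho$, I split at $1/\rho$. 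On $[0,1/\rho]$ the integral is at most $\frac{2}{\sqrt3}\int_0^\infty|\varphi|^2$. This is finite because $\varphi$ is bounded and in $L_1$, or directly from the decay \eqref{eq:jfg-circle-decay-assumption}.
\item On $[1/\rho,2/\rho]$, I substitute $u=\rho t$ and use \eqref{eq:jfg-circle-decay-assumption}. This gives a bound $C\rho^{2m+1+2\varepsilon}\int_1^2 u^{-2m-2-2\varepsilon}(1-u^2/4)^{-1/2}\,du\le C$.
\item For the lower bound on $J_\rho$, I use that the weight is at least $1$. Thus $J_\rho\ge\int_0^{1/\rho_0}|\varphi(t)|^2\,dt$ for all $\rho\le\rho_0$, and this is a positive constant independent of $\rho$.
\end{itemize}
Combining these, $J_\rho/A_\rho^2\asymp1$, and the prefactor $\pi/\rho$ yields $\|\psi_\rho\|_{L_2(\T)}^2\asymp\rho^{-1}$.

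\textbf{Main obstacle.} The only delicate point is the positivity of that lower bound: $\varphi$ must not vanish almost everywhere on $[0,1/\rho_0]$. If it did, $\int_0^{1/\rho_0}|\varphi|^2$ could be zero. I would argue this away in one of two ways. The first is to use continuity together with $\int_0^\infty\varphi=1$: then $\varphi\not\equiv0$, so $\int_0^{R}|\varphi|^2>0$ for some fixed $R$, and after shrinking $\rho_0$ so that $1/\rho_0\ge R$ the lower bound holds. The second is to fall back on a mild additional choice of $\rho_0$. In either case the constant depends only on $\varphi$, consistent with the paper's convention on constants.
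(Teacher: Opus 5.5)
Your proof is correct, but it takes a somewhat different route from the paper. The identity and the estimate on $[1/\rho,2/\rho]$ are the same as in the paper.

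The difference is in how you control the integral $J_\rho$. You prove explicit two-sided bounds for $A_\rho$ and $J_\rho$ that hold uniformly for all $0<\rho\le\rho_0$. The paper instead argues by limits. It applies dominated convergence on $[0,1/\rho]$ to show $J_\rho\to\|\varphi\|_{L_2(0,\infty)}^2$ and combines this with $A_\rho\to1$, so that in effect $\rho\|\psi_\rho\|_{L_2(\T)}^2\to\pi\|\varphi\|_{L_2(0,\infty)}^2$.

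The paper's route identifies the sharp limiting constant, and positivity of the limit follows simply from $\varphi\not\equiv0$. Strictly speaking, though, a limit yields the two-sided bound only for $\rho$ sufficiently small. One must then shrink $\rho_0$ again, or treat the range away from $0$ separately, and that is essentially your argument. Your route gives non-asymptotic constants on the whole range $(0,\rho_0]$ with no limiting argument, at the cost of not identifying the constant.

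Your ``main obstacle'' disappears if you use the interval $[0,2/\rho_0]$ instead of $[0,1/\rho_0]$. For $\rho\le\rho_0$ you then have $J_\rho\geq\int_0^{2/\rho}|\varphi|^2\,dt\geq\int_0^{2/\rho_0}|\varphi|^2\,dt$. This last integral is positive, because otherwise
\[
A_{\rho_0}=\int_0^{2/\rho_0}\frac{\varphi(t)}{\sqrt{1-\rho_0^2t^2/4}}\,dt
\]
would vanish, contradicting Lemma~\ref{lem:Arho}. So no further reduction of $\rho_0$ is needed.
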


\begin{proof}
	The identity follows from symmetry and the change of variables used in
	the proof of Theorem~\ref{thm:jfg-normalized-circle-kernel}. To determine the asymptotic behavior of the integral as $\rho\downarrow 0$, first consider $[0,1/\rho]$. After extending the integrand by zero, dominated convergence applies because the denominator is bounded below by $\sqrt{3}/2$ and $\varphi\in L_2(0,\infty)$. On $[1/\rho,2/\rho]$, the decay assumption \eqref{eq:jfg-circle-decay-assumption} and the substitution $u=\rho t$ give
    $$\int_{1/\rho}^{2/\rho}
	\frac{|\varphi(t)|^2}
	{\sqrt{1-\rho^2t^2/4}}\,dt\leq C\rho^{2m+1+2\varepsilon}\int_{1}^2\frac{u^{-2m-2-2\varepsilon}}{\sqrt{1-u^2/4}}\,du\leq C\rho^{2m+1+2\varepsilon}.$$
    Consequently, $$\int_{0}^{2/\rho}
	\frac{|\varphi(t)|^2}
	{\sqrt{1-\rho^2t^2/4}}\,dt\longrightarrow\|\varphi\|_{L_2(0,\infty)}^2.$$ 
	The limit is positive because $\int_0^\infty \varphi(t)dt=1$. Together with $A_\rho\rightarrow 1$, this proves \eqref{eq:jfg-circle-energy}.
\end{proof}

%\begin{remark}
%	For a single normalized nonnegative compactly supported profile, put
%	\(\mu_2:=\int_0^R\varphi(t)t^2\,dt\).  If the profile is not concentrated at
%	zero, then \(\mu_2>0\), and expansion of the numerator and denominator in
%	\eqref{eq:jfg-circle-multiplier} gives, for each fixed \(\ell\neq0\),
%	\[
%	1-\widehat\psi_\rho(\ell)
%	=\frac{\ell^2\mu_2}{2}\rho^2+O_\ell(\rho^4).
%	\]
%	Thus normalization gives second-order consistency but cannot give an order
%	greater than two.  Higher even orders in
%	Proposition~\ref{prop:jfg-compact-moment-profile} therefore use signed
%	dilation combinations.  The theorem proves an upper Fourier-decay estimate;
%	the multiplier may have zeros because of cancellation.
%\end{remark}

\subsection{Tensor-product kernels and convolution}

We construct multivariate kernels by taking tensor products of the univariate kernels introduced above. Given a scale vector
$\brho=(\rho_1,\ldots,\rho_d)\in(0,\rho_0]^d$, we define the full tensor-product kernel by
\begin{equation}
	\Psi_{\brho}^{(d)}(\boldsymbol{x})
	:=\prod_{j=1}^d\psi_{\rho_j}(x_j),
	\quad \boldsymbol{x}\in\T^d.
	\label{eq:jfg-tensor-kernel}
\end{equation}
The corresponding convolution operator is
\begin{equation}
	\mathcal C_{\brho}^{(d)}f(\boldsymbol{x})
	:=(f*\Psi_{\brho}^{(d)})(\boldsymbol{x})
	=\int_{\T^d}f(\boldsymbol{y})
	\Psi_{\brho}^{(d)}(\boldsymbol{x}-\boldsymbol{y})
	\,d\boldsymbol{y}.
	\label{eq:jfg-tensor-convolution}
\end{equation}
When the dimension is fixed, we omit the superscript $d$.

%The full tensor product provides the natural multivariate extension of the
%univariate convolution model. Its squared $L_2$-norm, however, is the product
%of the squared $L_2$-norms of its univariate factors and may therefore grow
%rapidly with the dimension. To mitigate this tensor-product growth,
%Section~\ref{sec:journal-direct-anova} replaces the full tensor kernel with a
%weight-adapted, finite-order ANOVA kernel before discretization.

The following two estimates provide the principal bounds required for the discretization analysis: one for the continuous convolution approximation error and the other for the $L_2$-energy of the tensor-product kernel.

\begin{theorem}
	\label{thm:jfg-continuous-bias}
	Let $s>1/2$ and $f\in H^s_{\mathrm{mix}}(\T^d)$. Suppose that the kernel $\psi_\rho$ given by
	\eqref{eq:jfg-circle-kernel} satisfies Assumption~\ref{ass:jfg-kernel}. Let $a:=\min\{m,s\}$. Then, for any fixed
	$d\in\mathbb{N}$, there exists a constant $C_d>0$ independent of $\brho$ such that
	\begin{equation}
		\|f-\mathcal C_{\brho}^{(d)}f\|_{2}
		\leq
		C_d\Big(\sum_{j=1}^d\rho_j^a\Big)
		\|f\|_{H^s_\mathrm{mix}(\T^d)}.
		\label{eq:jfg-continuous-bias}
	\end{equation}
\end{theorem}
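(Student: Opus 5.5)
The argument works entirely on the Fourier side, via Parseval. Since $\mathcal C_{\brho}^{(d)}$ acts diagonally with multiplier $\widehat\Psi_{\brho}(\bk)=\prod_{j=1}^d\widehat\psi_{\rho_j}(k_j)$, we have
\[
\|f-\mathcal C_{\brho}^{(d)}f\|_2^2=\sum_{\bk\in\Z^d}\Big|1-\prod_{j=1}^d\widehat\psi_{\rho_j}(k_j)\Big|^2|\widehat f(\bk)|^2 .
\]
The plan is to bound the multiplier pointwise by $C_d\big(\sum_j\rho_j^a\big)\prod_j\langle k_j\rangle^{s}$. Then
\[
\|f-\mathcal C_{\brho}^{(d)}f\|_2^2\le C_d^2\Big(\sum_j\rho_j^a\Big)^2\|f\|_{s,\mathrm{mix}}^2,
\]
which is the claim after taking square roots.

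\textbf{Telescoping.} We use the identity
\[
1-\prod_{j=1}^d b_j=\sum_{j=1}^d\Big(\prod_{i<j}b_i\Big)(1-b_j)
\]
with $b_j=\widehat\psi_{\rho_j}(k_j)$. By Assumption~\ref{ass:jfg-kernel}, $|b_i|\le 1+c_1$ uniformly in $\rho_i$ and $k_i$. Hence each prefix product is bounded by $(1+c_1)^{d}$; this is the source of the $d$-dependence of $C_d$. It therefore remains to bound the single-coordinate factor $|1-\widehat\psi_{\rho_j}(k_j)|$ by $C\rho_j^a\langle k_j\rangle^s$.

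\textbf{One-dimensional estimate.} Assumption~\ref{ass:jfg-kernel} gives $|1-\widehat\psi_{\rho}(\ell)|\le c_1\min\{1,(\rho|\ell|)^m\}$. Since $0\le a\le m$ and $\min\{1,x^m\}\le x^a$ for $x\ge0$, this yields
\[
|1-\widehat\psi_\rho(\ell)|\le c_1\rho^a|\ell|^a\le c_1\rho^a\langle \ell\rangle^{s},
\]
using $a\le s$ and $|\ell|\le\langle\ell\rangle$. For $\ell=0$ the factor vanishes because $\widehat\psi_\rho(0)=1$. Since $\langle k_j\rangle^s\le\prod_i\langle k_i\rangle^s$, summing over $j$ gives the pointwise multiplier bound with $C_d=c_1(1+c_1)^{d-1}$.

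The only mildly delicate point is the interpolation $\min\{1,x^m\}\le x^a$ when $s<m$. It holds because $x^m\le x^a$ for $x\le1$ and $1\le x^a$ for $x\ge1$, so the argument has no real obstacle.
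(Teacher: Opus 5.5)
Your proposal is correct and follows essentially the same route as the paper: Parseval on the Fourier side, the telescoping bound on the product multiplier using $|\widehat\psi_{\rho_j}(\ell_j)|\le 1+c_1$, and the coordinatewise estimate $\min\{1,(\rho_j|\ell_j|)^m\}\le\rho_j^a\langle\ell_j\rangle^s$. Your justification of that last step via $\min\{1,x^m\}\le x^a$ also shows that the restriction $\rho_j\le 1$, which the paper invokes there, is not actually needed.
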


\begin{proof}
	The tensor-product structure gives
	$\widehat{\Psi}_{\brho}^{(d)}(\bell)
	=\prod_{j=1}^d\widehat{\psi}_{\rho_j}(\ell_j)$ for $\bell\in\Z^d$. Moreover, Assumption~\ref{ass:jfg-kernel} implies the uniform bound $|\widehat{\psi}_{\rho_j}(\ell_j)|\leq 1+c_1$. Applying the telescoping identity for products gives
	\[
	\Big|
	1-\prod_{j=1}^d\widehat{\psi}_{\rho_j}(\ell_j)
	\Big|
	\leq
	C_d\sum_{j=1}^d
	\Big|1-\widehat{\psi}_{\rho_j}(\ell_j)\Big|
	\leq
	C_d\sum_{j=1}^d
	\min\{1,(\rho_j|\ell_j|)^m\}.
	\]
	Since $a=\min\{m,s\}$ and $0<\rho_j\leq1$, each term satisfies
	\[
	\min\{1,(\rho_j|\ell_j|)^m\}
	\leq \rho_j^a\langle \ell_j\rangle^s.
	\]
	Hence, we can get
	\[
	\Big|
	1-\widehat{\Psi}_{\brho}^{(d)}(\bell)
	\Big|=\Big|
	1-\prod_{j=1}^d\widehat{\psi}_{\rho_j}(\ell_j)
	\Big|
	\leq
	C_d\Big(\sum_{j=1}^d\rho_j^a\Big)
	\prod_{j=1}^d\langle \ell_j\rangle^s.
	\]
	Parseval's identity now yields
	\begin{align*}
		\|f-\mathcal C_{\brho}^{(d)}f\|_{2}^2
		&=
		\sum_{\bell\in\Z^d}
		\Big|1-\widehat{\Psi}_{\brho}^{(d)}(\bell)\Big|^2
		|\widehat f(\bell)|^2 \\
		&\leq
		C_d^2\Big(\sum_{j=1}^d\rho_j^a\Big)^2
		\sum_{\bell\in\Z^d}
		\Big(\prod_{j=1}^d\langle \ell_j\rangle^{2s}\Big)
		|\widehat f(\bell)|^2 \\
		&=
		C_d^2\Big(\sum_{j=1}^d\rho_j^a\Big)^2
		\|f\|_{H^s_\mathrm{mix}(\T^d)}^2.
	\end{align*}
	This completes the proof.
\end{proof}

\begin{theorem}
	\label{thm:jfg-kernel-energy}
	Suppose that Assumption~\ref{ass:jfg-kernel1} holds with $\kappa>1/4$. Then, for
	every fixed $d\in\mathbb{N}$, there exists a constant $C_d>0$ independent of $\brho$ such that
	\begin{equation}
		\|\Psi_{\brho}^{(d)}\|_{2}^2
		\leq C_d\prod_{j=1}^d\rho_j^{-1}.
		\label{eq:jfg-kernel-l2-upper}
	\end{equation}
\end{theorem}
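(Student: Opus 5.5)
By Parseval's identity and the tensor-product structure, the $L_2$-energy factorizes:
\[
\|\Psi_{\brho}^{(d)}\|_2^2
=\sum_{\bell\in\Z^d}\prod_{j=1}^d|\widehat\psi_{\rho_j}(\ell_j)|^2
=\prod_{j=1}^d\|\psi_{\rho_j}\|_{L_2(\T)}^2 .
\]
So it suffices to prove the univariate bound $\|\psi_\rho\|_{L_2(\T)}^2\leq C\rho^{-1}$ for $0<\rho\leq\rho_0$, with $C$ independent of $\rho$. Then $C_d:=C^d$ works.

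For the univariate bound I use Assumption~\ref{ass:jfg-kernel1} and Parseval again:
\[
\|\psi_\rho\|_{L_2(\T)}^2=\sum_{\ell\in\Z}|\widehat\psi_\rho(\ell)|^2
\leq c_2^2\sum_{\ell\in\Z}(1+\rho|\ell|)^{-4\kappa}.
\]
This is the only place where $\kappa>1/4$ enters, since it makes $4\kappa>1$ and the series summable. I split the sum into $|\ell|\leq1/\rho$ and $|\ell|>1/\rho$.
\begin{itemize}
\item For $|\ell|\leq 1/\rho$, each term is at most $1$, and there are at most $2/\rho+1\leq 3/\rho$ such terms (using $\rho\leq1$).
\item For $|\ell|>1/\rho$, I bound $(1+\rho|\ell|)^{-4\kappa}\leq(\rho|\ell|)^{-4\kappa}$ and compare with an integral:
\[
2\rho^{-4\kappa}\sum_{\ell>1/\rho}\ell^{-4\kappa}\leq C\rho^{-4\kappa}(1/\rho)^{1-4\kappa}=C\rho^{-1}.
\]
\end{itemize}
Equivalently, the whole sum is a Riemann sum with mesh $\rho$ for $\rho^{-1}\int_{\mathbb R}(1+|t|)^{-4\kappa}\,dt$, which is finite.

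The only delicate point is getting this Riemann-sum comparison uniform in $\rho$, including at the boundary term near $|\ell|\approx1/\rho$. The monotonicity of $t\mapsto(1+\rho t)^{-4\kappa}$ gives $\sum_{\ell\geq1}(1+\rho\ell)^{-4\kappa}\leq\int_0^\infty(1+\rho t)^{-4\kappa}\,dt=\rho^{-1}/(4\kappa-1)$. This avoids the split altogether. Adding the $\ell=0$ term, which is at most $1\leq\rho^{-1}$, gives $\|\psi_\rho\|_2^2\leq c_2^2\bigl(1+2/(4\kappa-1)\bigr)\rho^{-1}$. Taking the product over $j$ then gives \eqref{eq:jfg-kernel-l2-upper} with $C_d=\bigl(c_2^2(1+2/(4\kappa-1))\bigr)^d$, which is independent of $\brho$.
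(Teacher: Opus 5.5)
Your proof is correct and follows the paper's argument: you bound $\sum_{\ell\in\Z}(1+\rho|\ell|)^{-4\kappa}\leq C_\kappa\rho^{-1}$ using $4\kappa>1$, apply Parseval to obtain $\|\psi_\rho\|_{L_2(\T)}^2\leq C\rho^{-1}$, and then factorize over the coordinates. The paper additionally justifies $\psi_\rho\in L_2(\T)$ before invoking Parseval, via square-summable Fourier coefficients, Riesz--Fischer, and uniqueness of Fourier coefficients in $L_1(\T)$; your argument uses this step implicitly.
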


\begin{proof}
	Since $\kappa>1/4$, we have
	\[
	\sum_{\ell\in\Z}(1+\rho|\ell|)^{-4\kappa}
	\leq C_\kappa\rho^{-1}.
	\]
	Assumption~\ref{ass:jfg-kernel1} implies
	$\sum_{\ell\in\Z}
	|\widehat{\psi}_\rho(\ell)|^2
	\leq C_\kappa\rho^{-1}$.
	In particular, the Fourier coefficients of $\psi_\rho$ belong to $\ell_2(\Z)$. By the Riesz--Fischer theorem, there is a function in $L_2(\T)$ with these Fourier coefficients. Uniqueness of Fourier coefficients in $L_1(\T)$ then shows that $\psi_\rho\in L_2(\T)$.
	Parseval's identity yields
	\[
	\|\psi_\rho\|_{L_2(\T)}^2
	=\sum_{\ell\in\Z}|\widehat{\psi}_\rho(\ell)|^2
	\leq C_\kappa\rho^{-1}.
	\]
	Finally, the tensor-product structure and Fubini's theorem give
	\[
	\|\Psi_{\brho}^{(d)}\|_{2}^2
	=\prod_{j=1}^d\|\psi_{\rho_j}\|_{L_2(\T)}^2
	\leq C_d\prod_{j=1}^d\rho_j^{-1},
	\]
	which completes the proof. 
\end{proof}

\section{Deterministic and stochastic quasi-interpolation on \texorpdfstring{\(\T^d\)}{the d-dimensional torus}}

To obtain a computable approximation, we replace the convolution integral by a suitable quadrature rule. We consider two discretization strategies: a deterministic rule on a uniform full-grid and a stochastic rule based on random sampling. The uniform rule exploits the periodic tensor-product structure, whereas random sampling gives a Monte Carlo approximation of the convolution operator. Periodic full-grid and sparse grid quasi-interpolants based on generalized Gaussian kernels have been studied in \cite{GaoHuSunWang2026}. Here, we consider the more general class of kernels satisfying the moment and multiplier conditions given in Sections~\ref{sec:jfg-preliminaries} and~\ref{sec:periodic_convolution}, and derive Sobolev space error estimates for both deterministic and stochastic discretizations.

\subsection{Deterministic full-grid quasi-interpolation}
\label{subsec:jfg-full-grid}

We first discretize the convolution with the isotropic tensor-product kernel
\(\Psi_\rho^{(d)}:=\Psi_{(\rho,\ldots,\rho)}^{(d)}\)
on a uniform grid. For \(n\in\mathbb N\), let
$$G_n^{(d)}:=\Big\{0,\frac{1}{n},\ldots,\frac{n-1}{n}\Big\}^d,\quad
M:=|G_n^{(d)}|=n^d.$$
Then, the associated full-grid quasi-interpolant is
defined by
\begin{equation}
	\mathcal Q_{n,\rho}f(\boldsymbol x)
	:=
	\frac{1}{n^d}
	\sum_{\boldsymbol y\in G_n^{(d)}}
	f(\boldsymbol y)
	\Psi_\rho^{(d)}(\boldsymbol x-\boldsymbol y).
	\label{eq:jfg-full-grid-rule}
\end{equation}

%The theorem below establishes an error estimate for the full-grid quasi-interpolant \eqref{eq:jfg-full-grid-rule}. %The proof is given in Appendix~\ref{Appendix1}.

\begin{theorem}
	\label{thm:jfg-full-grid-benchmark}
	Suppose that Assumptions~\ref{ass:jfg-kernel}
	and~\ref{ass:jfg-kernel1} hold with \(\kappa>1/4\). Let \(s>1/2\), \(a:=\min\{m,s\}\), and $0<\rho\leq \rho_0$. Let $Q_{n,\rho}f$ be defined by \eqref{eq:jfg-full-grid-rule}. Then, for any $f\in H^s_{\mathrm{mix}}(\T^d)$, there exists a constant \(C_{d}>0\) such that
	\begin{equation}
		\|f-\mathcal Q_{n,\rho}f\|_{2}
		\leq
		C_d
		\left\{
		\rho^a+n^{-s}+(n\rho)^{-2\kappa}
		\right\}
		\|f\|_{H^s_{\mathrm{mix}}(\T^d)}.
		\label{eq:jfg-full-grid-benchmark}
	\end{equation}
\end{theorem}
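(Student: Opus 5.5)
Split the error as
\[
f-\mathcal Q_{n,\rho}f=(f-\mathcal C_\rho^{(d)}f)+(\mathcal C_\rho^{(d)}f-\mathcal Q_{n,\rho}f).
\]
The first term is handled by Theorem~\ref{thm:jfg-continuous-bias} with $\rho_j=\rho$, which gives $C_d\,d\rho^a\|f\|_{s,\mathrm{mix}}$. The second term is a pure discretization error, and we analyze it entirely in the Fourier domain through aliasing.

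Since $f\in H^s_{\mathrm{mix}}$ has an absolutely convergent Fourier series, we may write $f=\sum_{\bk}\widehat f(\bk)\chi_{\bk}$ pointwise. Substituting into \eqref{eq:jfg-full-grid-rule} and using the discrete orthogonality $n^{-d}\sum_{\boldsymbol y\in G_n^{(d)}}\chi_{\bk}(\boldsymbol y)\overline{\chi_{\bell}(\boldsymbol y)}=\mathbf 1_{\{\bk\equiv\bell \bmod n\}}$ gives the aliasing formula
\[
\widehat{\mathcal Q_{n,\rho}f}(\bell)=\widehat\Psi_\rho^{(d)}(\bell)\sum_{\boldsymbol r\in\Z^d}\widehat f(\bell+n\boldsymbol r).
\]
The interchange of sums is justified because $\Psi_\rho^{(d)}\in L_2$ by Theorem~\ref{thm:jfg-kernel-energy}. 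Consequently,
\[
\widehat{(\mathcal C_\rho^{(d)} f-\mathcal Q_{n,\rho}f)}(\bell)=-\widehat\Psi_\rho^{(d)}(\bell)\sum_{\boldsymbol r\neq 0}\widehat f(\bell+n\boldsymbol r).
\]
By Parseval, it suffices to bound $\sum_{\bell}|\widehat\Psi_\rho^{(d)}(\bell)|^2\,\bigl|\sum_{\boldsymbol r\neq0}\widehat f(\bell+n\boldsymbol r)\bigr|^2$.

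Decompose each $\bell$ uniquely as $\bell=\bk+n\boldsymbol t$ with $\bk$ in the central box $\{-\lfloor n/2\rfloor,\dots\}^d$. Apply Cauchy--Schwarz with weights $\prod_j\langle \ell_j+nr_j\rangle^{2s}$:
\[
\Bigl|\sum_{\boldsymbol r\neq0}\widehat f(\bell+n\boldsymbol r)\Bigr|^2\le \Bigl(\sum_{\boldsymbol r\neq 0}\prod_j\langle \ell_j+nr_j\rangle^{-2s}\Bigr)\Bigl(\sum_{\boldsymbol r}\prod_j\langle \ell_j+nr_j\rangle^{2s}|\widehat f(\bell+n\boldsymbol r)|^2\Bigr).
\]
The second factor depends only on the residue class $\bk$, and summing it over $\bk$ gives $\|f\|_{s,\mathrm{mix}}^2$. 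It therefore remains to show, uniformly in $\bk$,
\[
\sum_{\boldsymbol t}|\widehat\Psi_\rho^{(d)}(\bk+n\boldsymbol t)|^2\sum_{\boldsymbol r\neq0}\prod_j\langle k_j+nt_j+nr_j\rangle^{-2s}\lesssim_d n^{-2s}+(n\rho)^{-4\kappa}.
\]
Both factors are products over coordinates, so we split according to whether the index is $\boldsymbol t=0$ or $\boldsymbol t\neq0$.
\begin{itemize}
\item[(i)] When $\boldsymbol t=0$, the multiplier factor is bounded by $(1+c_2)^{2d}$. The aliasing factor satisfies $\sum_{\boldsymbol r\ne0}\prod_j\langle k_j+nr_j\rangle^{-2s}\lesssim_d n^{-2s}$, because $|k_j|\le n/2$ forces $|k_j+nr_j|\gtrsim n|r_j|$ whenever $r_j\ne0$, and $\sum_{r\in\Z}\langle k+nr\rangle^{-2s}\le C$ uniformly.
\item[(ii)] When $\boldsymbol t\neq0$, the aliasing factor is at most $(C)^d$. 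Assumption~\ref{ass:jfg-kernel1} gives $|\widehat\psi_\rho(k_j+nt_j)|^2\le c_2^2(1+\rho n|t_j|/2)^{-4\kappa}$ for each $t_j\ne0$. The sum over $t_j\neq0$ is then $\lesssim (n\rho)^{-4\kappa}\sum_{t\ne0}|t|^{-4\kappa}$, which requires $4\kappa>1$; this is exactly $\kappa>1/4$. Coordinates with $t_j=0$ are bounded by constants. Expanding the product over $j$, every term with $\boldsymbol t\ne0$ carries at least one such factor $(n\rho)^{-4\kappa}$ times constants, for a total of $\lesssim_d (n\rho)^{-4\kappa}$. This assumes $n\rho\ge1$; otherwise the claimed bound is trivial, since $\|\mathcal Q_{n,\rho} f\|$ can be crudely bounded by the same computation with $(1+\cdot)^{-4\kappa}\le1$.
\end{itemize}

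The main obstacle is the bookkeeping in step (ii). Mixed terms, in which some coordinates alias in $\boldsymbol r$ and others in $\boldsymbol t$, must be dominated by a single power $n^{-2s}$ or $(n\rho)^{-4\kappa}$ rather than by a product over coordinates. Each such term does contain at least one small factor, and the remaining factors are uniformly bounded with a $d$-dependent constant. If the uniform bound on $\sum_{\boldsymbol t}$ is awkward in this form, I would instead bound $\sup_{\bk}$ by summing over all $\bell$ in the class and using $\sum_{t}(1+\rho|k+nt|)^{-4\kappa}\le C(1+(n\rho)^{-4\kappa}\cdot\text{const})$ coordinatewise. Taking square roots and combining with the bias bound yields \eqref{eq:jfg-full-grid-benchmark}.
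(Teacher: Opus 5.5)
Your argument is correct when $n\rho\ge 1$, but the sentence dismissing $n\rho<1$ is wrong, and it cannot be repaired. Replacing $(1+\rho|\ell|)^{-4\kappa}$ by $1$ makes $\sum_{\boldsymbol t}|\widehat\Psi_\rho^{(d)}(\bk+n\boldsymbol t)|^2$ diverge. The honest crude bound is $\sum_{t\in\Z}(1+\rho|k+nt|)^{-4\kappa}\lesssim 1+(n\rho)^{-1}$ in each coordinate, which only gives $\|\mathcal Q_{n,\rho}f\|_2\lesssim_d (n\rho)^{-d/2}\|f\|_{s,\mathrm{mix}}$.

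This bound is sharp. For $f\equiv 1$ your aliasing formula gives $\|1-\mathcal Q_{n,\rho}1\|_2^2=\bigl(\sum_{t\in\Z}|\widehat\psi_\rho(nt)|^2\bigr)^d-1$. Assumption~\ref{ass:jfg-kernel} yields $|\widehat\psi_\rho(\ell)|\ge 1/2$ whenever $\rho|\ell|\le c$, so this quantity is $\gtrsim_d (n\rho)^{-d}$ as $n\rho\to 0$. For example, take $n=1$, where $\mathcal Q_{1,\rho}f=f(\boldsymbol 0)\Psi_\rho^{(d)}$, and let $\rho\to 0$. For $n\rho\le 1$ the square of the right-hand side of \eqref{eq:jfg-full-grid-benchmark} is at most $9C_d^2(n\rho)^{-4\kappa}$. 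Hence, with $C_d$ independent of $n$ and $\rho$, the inequality fails whenever $d>4\kappa$. Since Assumption~\ref{ass:jfg-kernel1} with $\kappa$ implies it with every smaller $\kappa>1/4$, the statement as written is violated in every dimension $d\ge 2$.

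The paper's proof has the same hidden restriction. Step~II uses that $\|Q\|_{H^s(\T)\to L_2(\T)}$ is uniformly bounded, but Step~I only gives $\|Q\|\lesssim 1+(n\rho)^{-2\kappa}$. For $n=1$ one even has $\|Q\|\ge\|\psi_\rho\|_2\gtrsim\rho^{-1/2}$. The theorem therefore needs the extra hypothesis $n\rho\ge 1$ (or $n\rho\ge c_0>0$). This is harmless for Corollary~\ref{cor:jfg-full-grid-balanced-rate}, where $n\rho\to\infty$. Under that hypothesis, delete the final claim of your item (ii) and your proof is complete.

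For $n\rho\ge 1$ your route genuinely differs from the paper's. The paper argues in one variable, writing $f-\mathcal Q_{n,\rho}f=(f-\mathcal C_\rho f)+\mathcal C_\rho(f-P_nf)-\mathcal R_{n,\rho}f$ with the trigonometric interpolant $P_n$ and a high-frequency remainder $\mathcal R_{n,\rho}$. It then lifts the univariate operator bound through $H^s_{\mathrm{mix}}(\T^d)=\widehat{\bigotimes}_{j=1}^dH^s(\T)$, $\mathcal Q_{n,\rho}=Q^{\otimes d}$, and the telescoping identity for $I^{\otimes d}-Q^{\otimes d}$.

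You instead take the $d$-variate bias directly from Theorem~\ref{thm:jfg-continuous-bias} and derive the $d$-variate aliasing formula. You then bound the discretization error by a single weighted Cauchy--Schwarz over residue classes, expanding coordinate products over nonempty subsets.

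Your version avoids the Hilbert tensor-product machinery and shows exactly where $n\rho\ge 1$ enters, namely in absorbing $(n\rho)^{-4\kappa|\boldsymbol v|}$ into $(n\rho)^{-4\kappa}$. The paper's version is more modular: all the analysis is univariate, and Step~II applies verbatim to any tensor-product rule.

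A minor point: the interchange of sums in your aliasing formula is justified by absolute convergence of the Fourier series of $f$ and finiteness of the grid. What $\Psi_\rho^{(d)}\in L_2$ gives you is the subsequent use of Parseval.
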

\begin{proof}
	See Appendix~\ref{Appendix1}.
\end{proof}

The error bound consists of three terms: the convolution error, the uniform-grid sampling error, and the high-frequency aliasing associated with
the kernel. Balancing these terms yields the following convergence rate for the full-grid approximation.

\begin{corollary}
	\label{cor:jfg-full-grid-balanced-rate}
	Under the assumptions of
	Theorem~\ref{thm:jfg-full-grid-benchmark}, let
	$\rho\asymp n^{-2\kappa/(a+2\kappa)}$ and set
	$r:=2\kappa a/(a+2\kappa)$. Then, for any $f\in H^s_\mathrm{mix}(\T^d)$ and sufficiently large $n$, there exists a constant \(C_d>0\) independent of \(n\) such that
	\begin{equation}
		\|f-\mathcal Q_{n,\rho}f\|_2
		\leq C_d\, n^{-r}\|f\|_{H^s_\mathrm{mix}(\T^d)}.
		\label{eq:jfg-full-grid-balanced-rate}
	\end{equation}
\end{corollary}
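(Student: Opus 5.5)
This follows directly from Theorem~\ref{thm:jfg-full-grid-benchmark}: we substitute the prescribed scale into \eqref{eq:jfg-full-grid-benchmark} and compare the three terms. Fix constants $0<c_\ast\leq c^\ast$ with $c_\ast n^{-2\kappa/(a+2\kappa)}\leq\rho\leq c^\ast n^{-2\kappa/(a+2\kappa)}$. Since the exponent is negative, $\rho\to0$ as $n\to\infty$. For $n$ sufficiently large we therefore have $0<\rho\leq\rho_0$, so Theorem~\ref{thm:jfg-full-grid-benchmark} applies. This is the only place where ``sufficiently large $n$'' is used.

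We then estimate each term separately. First,
\[
\rho^a\leq (c^\ast)^a n^{-2\kappa a/(a+2\kappa)}=(c^\ast)^a n^{-r}.
\]
For the aliasing term, $n\rho\geq c_\ast n^{1-2\kappa/(a+2\kappa)}=c_\ast n^{a/(a+2\kappa)}$, and hence
\[
(n\rho)^{-2\kappa}\leq c_\ast^{-2\kappa} n^{-2\kappa a/(a+2\kappa)}=c_\ast^{-2\kappa}n^{-r}.
\]
The choice of $\rho$ is exactly the one that equalizes these two terms.

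It remains to handle $n^{-s}$. We need $r\leq s$, and this is the only point that needs a real argument. Since $a=\min\{m,s\}\leq s$ and $2\kappa/(a+2\kappa)<1$, we get $r=a\cdot 2\kappa/(a+2\kappa)<a\leq s$. Thus $n^{-s}\leq n^{-r}$ for $n\geq1$.

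Summing the three bounds, the bracket in \eqref{eq:jfg-full-grid-benchmark} is at most $\bigl((c^\ast)^a+1+c_\ast^{-2\kappa}\bigr)n^{-r}$. Absorbing this factor into $C_d$ gives \eqref{eq:jfg-full-grid-balanced-rate}, with a constant independent of $n$ because $c_\ast$ and $c^\ast$ are fixed.
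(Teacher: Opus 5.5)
Your proposal is correct and follows the same route as the paper's (brief) argument. You substitute the scale into Theorem~\ref{thm:jfg-full-grid-benchmark}, note that $\rho^a$ and $(n\rho)^{-2\kappa}$ are both of order $n^{-r}$, use $r<a\leq s$ to absorb $n^{-s}$, and take $n$ large enough that $\rho\leq\rho_0$.
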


%\begin{proof}
%	The choice of $\rho_n$ balances the convolution-bias term
%	$\rho_n^a$ with the quadrature term $(n\rho_n)^{-2\kappa}$. Indeed,
%	\[
%	\rho_n^a
%	=n^{-2\kappa a/(a+2\kappa)}
%	=n^{-r},
%	\]
%	and, since $n\rho_n=n^{a/(a+2\kappa)}$,
%	\[
%	(n\rho_n)^{-2\kappa}
%	=n^{-2\kappa a/(a+2\kappa)}
%	=n^{-r}.
%	\]
%	Moreover, $r<a\leq s$, so the remaining term in
%	Theorem~\ref{thm:jfg-full-grid-benchmark} satisfies $n^{-s}\leq n^{-r}$.
%	The relations $\rho_n\to0$ and $n\rho_n\to\infty$ further show that
%	$\rho_n\leq\rho_0$ and $n\rho_n\geq4$ for all sufficiently large $n$.
%	Applying Theorem~\ref{thm:jfg-full-grid-benchmark} therefore gives
%	\[
%	\|f-\mathcal Q_{n,\rho_n}^{\mathrm{fg}}f\|_2
%	\leq C_d n^{-r}\|f\|_{s,\mathrm{mix}}.
%	\]
%	Finally, because the full tensor grid contains $M=n^d$ points, we have
%	$n^{-r}=M^{-r/d}$, which proves
%	\eqref{eq:jfg-full-grid-balanced-rate}.
%\end{proof}

%\begin{remark}
%	The factor \(1/d\) in the exponent of
%	\eqref{eq:jfg-full-grid-balanced-rate} is a direct consequence of the
%	cardinality relation \(M=n^d\). Thus, although the full-grid rule is
%	straightforward to implement and remains useful in low fixed
%	dimensions, both its sampling complexity and its convergence rate
%	exhibit the curse of dimensionality associated with tensor-product
%	grids.
%\end{remark}

\subsection{Stochastic Monte Carlo quasi-interpolation}
\label{subsec:jlrn-monte-carlo}
We first formulate the Monte Carlo discretization for a general periodic kernel  \(\calK\in L_2(\T^d)\) satisfying \(\widehat{\calK}(\boldsymbol 0)=1\). For a continuous function \(f\), we define the convolution operator by
\begin{equation}\label{eq:ConvOpt_Sto}
    \mathcal C_{\calK}f(\boldsymbol x):=f*\calK(\boldsymbol x)=\int_{\T^d}f(\boldsymbol y)\calK(\boldsymbol x-\boldsymbol y)d\boldsymbol y.
\end{equation}
Let \(\boldsymbol X\) be uniformly distributed on \(\T^d\), and let \(\boldsymbol X_1,\ldots,\boldsymbol X_M\) be independent copies of \(\boldsymbol X\) and \(f(\boldsymbol X_1),\ldots,f(\boldsymbol X_M)\) be the sampled data. Approximating the convolution integral by random sampling defines the Monte Carlo quasi-interpolant (MCQI):
\begin{equation*}
    \mathcal Q_{M,\calK}f(\boldsymbol x)
    :=
    \frac{1}{M}
    \sum_{j=1}^M
    f(\boldsymbol X_j)
    \calK(\boldsymbol x-\boldsymbol X_j),
    \quad
    \text{for a.e. }\boldsymbol x\in\T^d.
    \label{eq:jlrn-general-iid-rule}
\end{equation*}
%The MCQI scheme gives the approximant directly, without solving a linear system.

%The MCQI mean-square error has an exact decomposition into the deterministic convolution bias and the sampling variance.

\begin{theorem}
\label{thm:jlrn-iid-risk}
Let $\mathcal C_{\calK}$ be defined by \eqref{eq:ConvOpt_Sto} with
\(\widehat{\calK}(\boldsymbol 0)=1\). Then, for any
\(f\in C(\T^d)\), $\E\mathcal Q_{M,\calK}f
    =\mathcal C_{\calK}f$
as a Bochner expectation in \(L_2(\T^d)\). Moreover, we have
\begin{align}
    \E
    \bigl\|
    f-\mathcal Q_{M,\calK}f
    \bigr\|_2^2
    &=
    \|f-\mathcal C_{\calK}f\|_2^2
    +
    \frac{1}{M}
    \big(
    \|f\|_2^2\|\calK\|_2^2
    -
    \|\mathcal C_{\calK}f\|_2^2
    \big).
    \label{eq:jlrn-iid-exact}
\end{align}
\end{theorem}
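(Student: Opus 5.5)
Write $Z_j:=f(\boldsymbol X_j)\calK(\cdot-\boldsymbol X_j)$, viewed as a random element of $L_2(\T^d)$, so that $\mathcal Q_{M,\calK}f=\frac1M\sum_{j=1}^M Z_j$. The plan has two parts: identify the mean of $Z_j$, then compute its second moment in $L_2(\T^d)$. The identity \eqref{eq:jlrn-iid-exact} will then follow from the standard bias--variance decomposition for averages of i.i.d.\ Hilbert-space-valued random variables.

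\textbf{Step 1: measurability, integrability and the mean.} Since $f$ is continuous and $\calK\in L_2(\T^d)$, the map $\boldsymbol y\mapsto f(\boldsymbol y)\calK(\cdot-\boldsymbol y)$ is continuous from $\T^d$ into $L_2(\T^d)$. This uses continuity of translation in $L_2$ together with uniform continuity of $f$. Hence $Z_j$ is strongly measurable. Translation invariance of the $L_2$ norm gives
\[
\|Z_j\|_2=|f(\boldsymbol X_j)|\,\|\calK\|_2\leq\|f\|_\infty\|\calK\|_2,
\]
so $Z_j$ is Bochner integrable. To identify $\E Z_j$, pair with an arbitrary $g\in L_2(\T^d)$ and use Fubini:
\[
\langle \E Z_j,g\rangle=\int_{\T^d}\int_{\T^d}f(\boldsymbol y)\calK(\boldsymbol x-\boldsymbol y)\overline{g(\boldsymbol x)}\,d\boldsymbol x\,d\boldsymbol y=\langle\mathcal C_{\calK}f,g\rangle.
\]
Fubini is justified because the integrand is bounded by $\|f\|_\infty|\calK(\boldsymbol x-\boldsymbol y)||g(\boldsymbol x)|$, which is integrable on $\T^d\times\T^d$. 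This gives $\E Z_j=\mathcal C_{\calK}f$, and by linearity $\E\mathcal Q_{M,\calK}f=\mathcal C_{\calK}f$.

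\textbf{Step 2: the second moment.} Expand
\[
f-\mathcal Q_{M,\calK}f=(f-\mathcal C_{\calK}f)-\frac1M\sum_{j=1}^M\bigl(Z_j-\mathcal C_{\calK}f\bigr).
\]
The centered terms $W_j:=Z_j-\E Z_j$ are independent, mean zero and square integrable. Therefore:
\begin{itemize}
\item the cross terms $\E\langle W_i,W_j\rangle$ vanish for $i\neq j$;
\item the cross terms between the deterministic bias $f-\mathcal C_{\calK}f$ and each $W_j$ vanish, since $\E\langle h,W_j\rangle=\langle h,\E W_j\rangle=0$ for fixed $h$.
\end{itemize}
Consequently
\[
\E\|f-\mathcal Q_{M,\calK}f\|_2^2=\|f-\mathcal C_{\calK}f\|_2^2+\frac1M\,\E\|W_1\|_2^2,
\qquad
\E\|W_1\|_2^2=\E\|Z_1\|_2^2-\|\mathcal C_{\calK}f\|_2^2.
\]
Finally, translation invariance gives $\E\|Z_1\|_2^2=\E|f(\boldsymbol X)|^2\|\calK\|_2^2=\|f\|_2^2\|\calK\|_2^2$, which is exactly \eqref{eq:jlrn-iid-exact}.

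\textbf{Main obstacle.} The only delicate point is justifying the Hilbert-space manipulations, namely measurability and the exchange of expectation with inner products. The continuity argument in Step~1 handles this. Alternatively, one can work pointwise in $\boldsymbol x$: compute $\E|f(\boldsymbol x)-\mathcal Q_{M,\calK}f(\boldsymbol x)|^2$ as a scalar variance, then integrate over $\boldsymbol x$ using Tonelli, which avoids Bochner theory entirely. The hypothesis $\widehat{\calK}(\boldsymbol 0)=1$ is not needed for the identity itself; it only ensures that $\mathcal C_{\calK}$ is a sensible approximation of $f$.
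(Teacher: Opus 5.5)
Your proposal is correct and follows essentially the same route as the paper. Both arguments obtain strong measurability from continuity of $\boldsymbol y\mapsto\calK(\cdot-\boldsymbol y)$ into $L_2$ and Bochner integrability from $\|Z_j\|_2\leq\|f\|_\infty\|\calK\|_2$, then combine $\E\|Z_j\|_2^2=\|f\|_2^2\|\calK\|_2^2$ with the bias--variance expansion in which the cross terms vanish; the only difference is that you identify $\E Z_j$ weakly by pairing with $g\in L_2(\T^d)$ while the paper evaluates it pointwise a.e.\ via Fubini, and your remark that $\widehat{\calK}(\boldsymbol 0)=1$ is not needed for the identity itself is correct.
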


\begin{proof}
For \(j=1,\ldots,M\), define the \(L_2(\T^d)\)-valued random variable
\[
\boldsymbol Z_j:=f(\boldsymbol X_j)\calK(\,\cdot-\boldsymbol X_j).
\]
The map $\boldsymbol y\mapsto\calK(\,\cdot-\boldsymbol y)$
is continuous from \(\T^d\) into \(L_2(\T^d)\), so \(\boldsymbol Z_j\) is strongly measurable. Since $\|\boldsymbol Z_j\|_2\leq\|f\|_\infty\|\calK\|_2$ almost surely, \(\boldsymbol Z_j\) is Bochner integrable. Uniformity of
\(\boldsymbol X_j\) and Fubini's theorem then give, for a.e.
\(\boldsymbol x\in\T^d\),
\[
\E \boldsymbol Z_j(\boldsymbol x)
=
\int_{\T^d}
f(\boldsymbol y)
\calK(\boldsymbol x-\boldsymbol y)
\,d\boldsymbol y
=
\mathcal C_{\calK}f(\boldsymbol x).
\]
Consequently, $\E \boldsymbol Z_j=\mathcal C_{\calK}f$ in $L_2(\T^d)$.
Furthermore, a direct computation gives
\begin{align*}
    \E\|\boldsymbol Z_j\|_2^2
    &=
    \E\Big[
    |f(\boldsymbol X_j)|^2
    \|\calK(\,\cdot-\boldsymbol X_j)\|_2^2
    \Big] =
    \|f\|_2^2\|\calK\|_2^2.
\end{align*}
It follows that the variance of $\boldsymbol Z_j$ is
\begin{equation}
    \E\|\boldsymbol Z_j-\E \boldsymbol Z_j\|_2^2=
\|f\|_2^2\|\calK\|_2^2
-\|\mathcal C_{\calK}f\|_2^2=:\sigma_{f,\calK}^2.
    \label{eq:jlrn-single-sample-variance}
\end{equation}

The centered random variables \(\boldsymbol Z_j-\E \boldsymbol Z_j\) are independent, so their
off-diagonal expected inner products vanish. Therefore, we get
\begin{align*}
    \E
    \Big\|
    \frac{1}{M}
    \sum_{j=1}^M
    (\boldsymbol Z_j-\E \boldsymbol Z_j)
    \Big\|_2^2
    =
    \frac{1}{M^2}
    \sum_{j=1}^M
    \E \|\boldsymbol Z_j-\E \boldsymbol Z_j\|_2^2 =
    \frac{1}{M}\sigma_{f,\calK}^2.
\end{align*}

Finally, we decompose the error as
\[
f-\mathcal Q_{M,\calK}f
=
f-\mathcal C_{\calK}f
-
\frac{1}{M}
\sum_{j=1}^M
(\boldsymbol Z_j-\E \boldsymbol Z_j).
\]
Expanding the squared norm gives the desired result because the cross term vanishes. 
\end{proof}

To derive a concentration bound, we apply McDiarmid's bounded differences inequality \cite{McDiarmid1989}.

\begin{lemma}[Bounded differences inequality]
\label{lem:jlrn-bounded-differences}
Let \(\Omega\) be a measurable space, let
\(F:\Omega^M\to\mathbb R\) be measurable, and suppose that there exist
constants \(c_1,\ldots,c_M\geq0\), not all zero, such that
\[
	\bigl|
	F(x_1,\ldots,x_j,\ldots,x_M)
	-
	F(x_1,\ldots,x_j',\ldots,x_M)
	\bigr|
	\leq c_j
\]
for every \(j\in\{1,\ldots,M\}\) and all
\(x_1,\ldots,x_M,x_j'\in\Omega\). If \(\boldsymbol X_1,\ldots,\boldsymbol X_M\) are
independent \(\Omega\)-valued random variables, then, for every
\(\epsilon>0\),
\begin{equation}
	\mathbb P\Big\{
		F(\boldsymbol X_1,\ldots,\boldsymbol X_M)-\mathbb EF(\boldsymbol X_1,\ldots,\boldsymbol X_M)
		\geq\epsilon
	\Big\}
	\leq
	\exp\Big(
		-\frac{2\epsilon^2}{\sum_{j=1}^M c_j^2}
	\Big).
	\label{eq:jlrn-mcdiarmid-one-sided}
\end{equation}
Moreover, we have
\begin{equation}
	\mathbb P\Big\{
		\big|
		F(\boldsymbol X_1,\ldots,\boldsymbol X_M)-\mathbb EF(\boldsymbol X_1,\ldots,\boldsymbol X_M)
		\big|
		\geq\epsilon
	\Big\}
	\leq
	2\exp\Big(
		-\frac{2\epsilon^2}{\sum_{j=1}^M c_j^2}
	\Big).
	\label{eq:jlrn-mcdiarmid-two-sided}
\end{equation}
\end{lemma}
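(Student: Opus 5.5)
The statement is McDiarmid's bounded differences inequality. I will prove it with the standard martingale (Doob) decomposition together with Hoeffding's lemma, and then deduce the two-sided bound by symmetry.

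\emph{Setup.} Write \(F:=F(\boldsymbol X_1,\ldots,\boldsymbol X_M)\). The function \(F\) is bounded up to a constant, since changing all coordinates one at a time moves it by at most \(\sum_j c_j\). Hence all expectations below are finite. Set \(\mathcal F_k:=\sigma(\boldsymbol X_1,\ldots,\boldsymbol X_k)\), \(V_k:=\E[F\mid\mathcal F_k]-\E[F\mid\mathcal F_{k-1}]\), so that \(F-\E F=\sum_{k=1}^M V_k\). By independence and Fubini, \(\E[F\mid\mathcal F_k]=g_k(\boldsymbol X_1,\ldots,\boldsymbol X_k)\), where \(g_k(x_1,\ldots,x_k):=\E F(x_1,\ldots,x_k,\boldsymbol X_{k+1},\ldots,\boldsymbol X_M)\). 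Conditionally on \(\mathcal F_{k-1}\), the increment \(V_k\) lies in the interval \([L_k,U_k]\) with
\[
L_k:=\inf_{x}g_k(\boldsymbol X_1,\ldots,\boldsymbol X_{k-1},x)-g_{k-1},\qquad U_k:=\sup_{x}g_k(\boldsymbol X_1,\ldots,\boldsymbol X_{k-1},x)-g_{k-1}.
\]
The bounded difference hypothesis, integrated over the remaining independent variables, gives \(U_k-L_k\le c_k\). Also \(\E[V_k\mid\mathcal F_{k-1}]=0\).

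\emph{Key step.} Apply Hoeffding's lemma conditionally: a mean-zero variable supported in an interval of length \(c_k\) satisfies \(\E[e^{\lambda V_k}\mid\mathcal F_{k-1}]\le e^{\lambda^2c_k^2/8}\). This step is the main technical point. One has to check that \(L_k\) and \(U_k\) are \(\mathcal F_{k-1}\)-measurable; taking the inf and sup over an uncountable \(x\) raises measurability issues. Either accept this as standard, or avoid it by bounding \(V_k\) through the interval with endpoints \(g_{k-1}\pm\) stuff. The simplest fix is to note that Hoeffding's lemma only needs \(V_k\) to lie in an interval of length \(c_k\) whose left endpoint is \(\mathcal F_{k-1}\)-measurable. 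One can instead use the pathwise convexity proof of Hoeffding's lemma with the conditional distribution of \(\boldsymbol X_k\), which is just its law by independence.

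\emph{Iteration and optimization.} Iterating the tower property from \(k=M\) down to \(k=1\) gives
\[
\E e^{\lambda(F-\E F)}\le\exp\Big(\tfrac{\lambda^2}{8}\sum_k c_k^2\Big).
\]
Markov's inequality then yields
\[
\mathbb P\{F-\E F\ge\epsilon\}\le \exp\Big(-\lambda\epsilon+\tfrac{\lambda^2}{8}\sum_k c_k^2\Big).
\]
Choosing \(\lambda=4\epsilon/\sum_k c_k^2\) gives \eqref{eq:jlrn-mcdiarmid-one-sided}.

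\emph{Two-sided bound.} Applying the one-sided bound to \(-F\), which satisfies the same bounded differences with the same \(c_j\), and taking a union bound gives \eqref{eq:jlrn-mcdiarmid-two-sided}. Alternatively, since this is a classical result, the proof could simply cite \cite{McDiarmid1989}. The plan above is the self-contained route.
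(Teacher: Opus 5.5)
Your argument is correct and is essentially the paper's route: the paper gives no proof of its own and simply cites McDiarmid's survey \cite{McDiarmid1989}, whose proof is exactly this Doob-martingale decomposition with a conditional Hoeffding bound. Your pathwise fix correctly handles the measurability concern, namely applying Hoeffding's lemma for each fixed \((x_1,\ldots,x_{k-1})\) under the law of \(\boldsymbol X_k\) and then integrating. Keep that version rather than a crude measurable interval such as \([-c_k,c_k]\), since the latter would lose a factor \(4\) in the exponent.
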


\begin{theorem}
\label{thm:jlrn-iid-concentration}
Let the assumptions of Theorem~\ref{thm:jlrn-iid-risk} hold, and define
\(B_{f,\mathcal K}:=\|f\|_\infty\|\mathcal K\|_2\). If
\(B_{f,\mathcal K}>0\), then, for every \(\epsilon>0\),
\begin{equation}
	\mathbb P\Big\{
		\bigl\|f-\mathcal Q_{M,\mathcal K}f\bigr\|_2
		\geq
		\|f-\mathcal C_{\mathcal K}f\|_2
		+
		\frac{\sigma_{f,\mathcal K}}{\sqrt M}
		+
		\epsilon
	\Big\}\leq\exp\Big(-\frac{M\epsilon^2}{2B_{f,\mathcal K}^2}\Big).
	\label{eq:jlrn-iid-tail}
\end{equation}
If \(B_{f,\mathcal K}=0\), then \(f=0\), and the approximation error
vanishes almost surely.
\end{theorem}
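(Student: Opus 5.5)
The plan is to apply Lemma~\ref{lem:jlrn-bounded-differences} to the random variable
\[
F(\boldsymbol X_1,\ldots,\boldsymbol X_M):=\bigl\|f-\mathcal Q_{M,\mathcal K}f\bigr\|_2 ,
\]
viewed as a measurable function on $(\T^d)^M$. Measurability follows from the continuity of $\boldsymbol y\mapsto\mathcal K(\cdot-\boldsymbol y)$ into $L_2(\T^d)$, which was already used in the proof of Theorem~\ref{thm:jlrn-iid-risk}, together with the continuity of $f$.

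\textbf{Bounded differences.} Replace a single sample $\boldsymbol X_j=x_j$ by $x_j'$. Only the $j$th summand of $\mathcal Q_{M,\mathcal K}f$ changes, so the reverse triangle inequality gives
\[
|F(\ldots,x_j,\ldots)-F(\ldots,x_j',\ldots)|
\le \frac1M\bigl\|f(x_j)\mathcal K(\cdot-x_j)-f(x_j')\mathcal K(\cdot-x_j')\bigr\|_2
\le \frac{2}{M}\|f\|_\infty\|\mathcal K\|_2 .
\]
Here we use translation invariance of the $L_2$ norm on $\T^d$. Thus $c_j=2B_{f,\mathcal K}/M$, and $\sum_j c_j^2=4B_{f,\mathcal K}^2/M$. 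The one-sided bound \eqref{eq:jlrn-mcdiarmid-one-sided} then yields
\[
\mathbb P\{F-\E F\ge\epsilon\}\le\exp\!\Big(-\frac{2\epsilon^2}{4B_{f,\mathcal K}^2/M}\Big)=\exp\!\Big(-\frac{M\epsilon^2}{2B_{f,\mathcal K}^2}\Big),
\]
which is exactly the exponent in \eqref{eq:jlrn-iid-tail}.

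\textbf{Bounding the mean.} It remains to show $\E F\le \|f-\mathcal C_{\mathcal K}f\|_2+\sigma_{f,\mathcal K}/\sqrt M$. Once this holds, the event in \eqref{eq:jlrn-iid-tail} is contained in $\{F-\E F\ge\epsilon\}$. Write $f-\mathcal Q_{M,\mathcal K}f=(f-\mathcal C_{\mathcal K}f)-\frac1M\sum_j(\boldsymbol Z_j-\E\boldsymbol Z_j)$. The triangle inequality followed by Jensen's inequality (or Cauchy--Schwarz) gives
\[
\E F\le\|f-\mathcal C_{\mathcal K}f\|_2+\Big(\E\Big\|\frac1M\sum_j(\boldsymbol Z_j-\E\boldsymbol Z_j)\Big\|_2^2\Big)^{1/2},
\]
and the second term equals $\sigma_{f,\mathcal K}/\sqrt M$ by the variance computation in the proof of Theorem~\ref{thm:jlrn-iid-risk}. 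Alternatively, one can apply Jensen directly to the exact identity \eqref{eq:jlrn-iid-exact} and use $\sqrt{a^2+b^2}\le a+b$.

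\textbf{Degenerate case.} If $B_{f,\mathcal K}=0$, then since $\widehat{\mathcal K}(\boldsymbol 0)=1$ forces $\mathcal K\ne0$, we must have $\|f\|_\infty=0$. Hence $f=0$ and $\mathcal Q_{M,\mathcal K}f=0$ identically.

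The only delicate points are the measurability of $F$ and the requirement in Lemma~\ref{lem:jlrn-bounded-differences} that the bound hold for all points, not just almost surely. Both are handled by the continuity of $f$ and of the translation map, so I expect no real obstacle. The main care needed is getting the constant $2$ in $c_j$ right, so that the exponent matches \eqref{eq:jlrn-iid-tail}.
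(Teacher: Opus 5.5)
Your proposal is correct and essentially follows the paper's argument: McDiarmid's inequality with $c_j=2B_{f,\mathcal K}/M$, Jensen's inequality together with the variance identity to bound the mean, and the same reasoning for the degenerate case. The only difference is cosmetic. The paper applies the bounded differences inequality to the fluctuation $\|S_M\|_2$, with $S_M=\frac1M\sum_j(\boldsymbol Z_j-\E\boldsymbol Z_j)$, and uses the triangle inequality pathwise, whereas you apply it to the full error $\|f-\mathcal Q_{M,\mathcal K}f\|_2$ and use the triangle inequality inside the expectation; both routes yield the identical bound.
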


\begin{proof}
Let \(\boldsymbol Z_1,\ldots,\boldsymbol Z_M\) be the random \(L_2\)-valued functions
introduced in the proof of Theorem~\ref{thm:jlrn-iid-risk}, and define
\[
	S_M
	:=
	\frac{1}{M}\sum_{j=1}^M\bigl(\boldsymbol Z_j-\mathbb E \boldsymbol Z_j\bigr).
\]
Translation invariance of the \(L_2\)-norm gives almost surely,
\[
	\|\boldsymbol Z_j\|_2
	=
	|f(\boldsymbol X_j)|\cdot
	\bigl\|\mathcal K(\,\cdot-\boldsymbol X_j)\bigr\|_2
	\leq
	\|f\|_\infty\|\mathcal K\|_2
	=
	B_{f,\mathcal K}.
\]

Regard \(\|S_M\|_2\) as a function of the independent sampling sites
\(\boldsymbol X_1,\ldots,\boldsymbol X_M\). Replacing
\(\boldsymbol X_j\) by an arbitrary \(\boldsymbol X_j'\) changes
\(S_M\) by
	$\frac{1}{M}
	\big[
		f(\boldsymbol X_j)
		\mathcal K(\,\cdot-\boldsymbol X_j)
		-
		f(\boldsymbol X_j')
		\mathcal K(\,\cdot-\boldsymbol X_j')
	\big]$.
The \(L_2\)-norm of this difference is bounded by
\(2B_{f,\mathcal K}/M\), so we obtain
\[
	\Bigl|\|S_M\|_2-\|S_M'\|_2\Bigr|
	\leq
	\|S_M-S_M'\|_2
	\leq
	\frac{2B_{f,\mathcal K}}{M}.
\]
Applying Lemma~\ref{lem:jlrn-bounded-differences} with
\(c_j=2B_{f,\mathcal K}/M\) yields
\begin{equation}
	\mathbb P\Big\{
		\|S_M\|_2\geq\mathbb E\|S_M\|_2+\epsilon
	\Big\}\leq\exp\Big(-\frac{M\epsilon^2}{2B_{f,\mathcal K}^2}\Big).
	\label{eq:jlrn-bounded-differences}
\end{equation}
Furthermore, Jensen's inequality and the variance identity in Theorem~\ref{thm:jlrn-iid-risk} give
\[
	\mathbb E\|S_M\|_2
	\leq
	\Bigl(\mathbb E\|S_M\|_2^2\Bigr)^{1/2}
	=
	\frac{\sigma_{f,\mathcal K}}{\sqrt M}.
\]
The triangle inequality
	$\bigl\|f-\mathcal Q_{M,\mathcal K}^{\mathrm{iid}}f\bigr\|_2
	\leq
	\|f-\mathcal C_{\mathcal K}f\|_2+\|S_M\|_2$ shows that
the event on the left hand side of \eqref{eq:jlrn-iid-tail} is
contained in
\[
	\left\{
		\|S_M\|_2
		\geq
		\frac{\sigma_{f,\mathcal K}}{\sqrt M}+\epsilon
	\right\}
	\subseteq
	\Big\{
		\|S_M\|_2
		\geq
		\mathbb E\|S_M\|_2+\epsilon
	\Big\}.
\]
Estimate \eqref{eq:jlrn-iid-tail} then follows from
\eqref{eq:jlrn-bounded-differences}.

If \(B_{f,\mathcal K}=0\), then
\(\widehat{\mathcal K}(\boldsymbol 0)=1\) implies
\(\|\mathcal K\|_2>0\), so \(\|f\|_\infty=0\). Hence \(f=0\),
and both the approximation and its error vanish almost surely.
\end{proof}

The following corollary presents a concentration inequality in a general framework.

\begin{corollary}
\label{cor:jlrn-kernel-concentration}
Let \(\mathcal H_d\) be a normed function space continuously embedded in
\(C(\T^d)\), and suppose that there exist constants
\(c_{2,d},c_{\infty,d}>0\) such that
\(\|f\|_2\leq c_{2,d}\|f\|_{\mathcal H_d}\) and
\(\|f\|_\infty\leq c_{\infty,d}\|f\|_{\mathcal H_d}\) for any
\(f\in\mathcal H_d\). Let \(\mathcal K\in L_2(\T^d)\) satisfy
\(\widehat{\mathcal K}(\boldsymbol0)=1\) and suppose that
\[
	\|f-\mathcal C_{\mathcal K}f\|_2
	\leq A_{\mathcal K}\|f\|_{\mathcal H_d},
	\quad
	\|\mathcal K\|_2\leq E_{\mathcal K}.
\]
Let \(f\in\mathcal H_d\) satisfy \(\|f\|_{\mathcal H_d}\leq1\). If
\(M\in\mathbb N\) and \(\epsilon>0\) satisfy
\begin{equation}
	A_{\mathcal K}
	+
	\frac{c_{2,d}E_{\mathcal K}}{\sqrt M}
	\leq
	\frac{\epsilon}{2},
	\label{eq:jlrn-kernel-epsilon-condition}
\end{equation}
then we have 
\begin{equation}
	\mathbb P\Big\{
		\bigl\|
		f-\mathcal Q_{M,\mathcal K}f
		\bigr\|_2
		\geq\epsilon
	\Big\}
	\leq
	\exp\Big(
		-\frac{M\epsilon^2}
		{8c_{\infty,d}^2E_{\mathcal K}^2}
	\Big).
	\label{eq:jlrn-kernel-concentration}
\end{equation}
%Both the condition and the tail estimate are dimension-independent whenever \(c_{2,d}\), \(c_{\infty,d}\), \(A_\theta\), and \(E_\theta\) are bounded uniformly in \(d\).
\end{corollary}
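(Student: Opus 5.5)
The corollary follows directly from Theorem~\ref{thm:jlrn-iid-concentration}. We bound each quantity in the tail event \eqref{eq:jlrn-iid-tail} by the constants in the hypotheses, and then choose the free parameter in that theorem as a fixed fraction of \(\epsilon\).

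First, if \(f=0\), then the error vanishes almost surely and there is nothing to prove. Assume therefore \(f\neq0\), so that \(B_{f,\mathcal K}=\|f\|_\infty\|\mathcal K\|_2>0\); here we use \(\|\mathcal K\|_2>0\), which follows from \(\widehat{\mathcal K}(\boldsymbol 0)=1\).

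Next we bound the three ingredients of \eqref{eq:jlrn-iid-tail} using \(\|f\|_{\mathcal H_d}\leq1\).
\begin{itemize}
\item The bias satisfies \(\|f-\mathcal C_{\mathcal K}f\|_2\leq A_{\mathcal K}\).
\item By \eqref{eq:jlrn-single-sample-variance}, \(\sigma_{f,\mathcal K}^2\leq\|f\|_2^2\|\mathcal K\|_2^2\leq c_{2,d}^2E_{\mathcal K}^2\), so \(\sigma_{f,\mathcal K}/\sqrt M\leq c_{2,d}E_{\mathcal K}/\sqrt M\).
\item Finally, \(B_{f,\mathcal K}\leq c_{\infty,d}E_{\mathcal K}\).
\end{itemize}
Combining the first two bounds with condition \eqref{eq:jlrn-kernel-epsilon-condition} gives
\[
\|f-\mathcal C_{\mathcal K}f\|_2+\frac{\sigma_{f,\mathcal K}}{\sqrt M}\leq\frac{\epsilon}{2}.
\]

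Now apply Theorem~\ref{thm:jlrn-iid-concentration} with \(\epsilon/2\) in place of \(\epsilon\). By the previous display, the event \(\{\|f-\mathcal Q_{M,\mathcal K}f\|_2\geq\epsilon\}\) is contained in
\[
\Big\{\|f-\mathcal Q_{M,\mathcal K}f\|_2\geq\|f-\mathcal C_{\mathcal K}f\|_2+\frac{\sigma_{f,\mathcal K}}{\sqrt M}+\frac{\epsilon}{2}\Big\}.
\]
The probability of this event is at most \(\exp\bigl(-M\epsilon^2/(8B_{f,\mathcal K}^2)\bigr)\). Since the exponent is increasing in \(B_{f,\mathcal K}\), the bound \(B_{f,\mathcal K}\leq c_{\infty,d}E_{\mathcal K}\) yields \eqref{eq:jlrn-kernel-concentration}.

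There is no real obstacle. The only points needing care are the degenerate case \(B_{f,\mathcal K}=0\), handled above, and the monotonicity of the exponential bound in \(B_{f,\mathcal K}\), which justifies replacing \(B_{f,\mathcal K}\) by its upper bound.
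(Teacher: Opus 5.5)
Your proposal is correct and follows essentially the same route as the paper: bound \(\sigma_{f,\mathcal K}\) and \(B_{f,\mathcal K}\) through the embedding constants, use \eqref{eq:jlrn-kernel-epsilon-condition} to absorb bias plus standard deviation into \(\epsilon/2\), apply Theorem~\ref{thm:jlrn-iid-concentration} with deviation \(\epsilon/2\), and then replace \(B_{f,\mathcal K}\) by its upper bound. The only cosmetic difference is that you dispose of the degenerate case \(f=0\) at the start, whereas the paper handles \(B_{f,\mathcal K}=0\) at the end via the last clause of Theorem~\ref{thm:jlrn-iid-concentration}.
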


\begin{proof}
For \(\|f\|_{\mathcal H_d}\leq1\), the variance defined by \eqref{eq:jlrn-single-sample-variance} satisfies
\[
	\sigma_{f,\mathcal K}
	\leq
	\|f\|_2\|\mathcal K\|_2
	\leq
	c_{2,d}E_{\mathcal K},
\]
and
\(B_{f,\mathcal K}\leq
c_{\infty,d}E_{\mathcal K}\). The convolution error estimate and
\eqref{eq:jlrn-kernel-epsilon-condition} lead to
\[
	\|f-\mathcal C_{\mathcal K}f\|_2
	+
	\frac{\sigma_{f,\mathcal K}}{\sqrt M}
	\leq
	\frac{\epsilon}{2}.
\]
It follows that
\[
	\left\{
		\bigl\|
		f-\mathcal Q_{M,\mathcal K}f
		\bigr\|_2
		\geq\epsilon
	\right\}
	\subseteq
	\left\{
		\bigl\|
		f-\mathcal Q_{M,\mathcal K}f
		\bigr\|_2
		\geq
		\|f-\mathcal C_{\mathcal K}f\|_2
		+
		\frac{\sigma_{f,\mathcal K}}{\sqrt M}
		+
		\frac{\epsilon}{2}
	\right\}.
\]
If \(B_{f,\mathcal K}>0\), we apply Theorem
\ref{thm:jlrn-iid-concentration} with deviation
\(\epsilon/2\) to derive
\begin{align*}
	\mathbb P\Big\{
		\bigl\|
		f-\mathcal Q_{M,\mathcal K}f
		\bigr\|_2
		\geq\epsilon
	\Big\}
	\leq
	\exp\Big(
		-\frac{M\epsilon^2}
		{8B_{f,\mathcal K}^2}
	\Big)
	\leq
	\exp\left(
		-\frac{M\epsilon^2}
		{8c_{\infty,d}^2E_{\mathcal K}^2}
	\right).
\end{align*}
If \(B_{f,\mathcal K}=0\), the approximation error vanishes almost surely by Theorem~\ref{thm:jlrn-iid-concentration}.
\end{proof}

Specializing these estimates to the isotropic tensor-product
kernel \(\Psi_\rho^{(d)}\) from \eqref{eq:jfg-tensor-kernel} gives the Monte Carlo counterpart of the deterministic
full-grid construction in Section~\ref{subsec:jfg-full-grid}.

\begin{corollary}
\label{cor:jlrn-iid-tensor-baseline}
Under Assumptions~\ref{ass:jfg-kernel}
and~\ref{ass:jfg-kernel1} with \(\kappa>1/4\), let \(s>1/2\), set \(a:=\min\{m,s\}\), and fix \(d\in\mathbb N\). Then there exist constants \(C_d,c_d>0\), independent of \(f\), \(M\), \(\rho\), and \(\epsilon\), such that for any \(f\in H^s_{\mathrm{mix}}(\T^d)\) with
\(\|f\|_{s,\mathrm{mix}}\leq1\), the following concentration estimate holds:

For any \(M\in\mathbb N\), \(0<\rho\leq\rho_0\), and \(\epsilon>0\) satisfying
\begin{equation}
	C_d\Bigl(
		\rho^a+M^{-1/2}\rho^{-d/2}
	\Bigr)
	\leq
	\frac{\epsilon}{2},
	\label{eq:jlrn-iid-tensor-epsilon-condition}
\end{equation}
we have
\begin{equation}
	\mathbb P\Bigl\{
		\bigl\|
		f-\mathcal Q_{M,\Psi_\rho^{(d)}}f
		\bigr\|_2
		\geq\epsilon
	\Bigr\}
	\leq
	\exp\bigl(-c_dM\rho^d\epsilon^2\bigr).
	\label{eq:jlrn-iid-tensor-epsilon-tail}
\end{equation}

In particular, choosing \(\rho\asymp M^{-1/(2a+d)}\) gives the following estimate:
\begin{equation}
	\Bigl(
		\mathbb E
		\bigl\|
		f-\mathcal Q_{M,\Psi_{\rho}^{(d)}}f
		\bigr\|_2^2
	\Bigr)^{1/2}
	\leq
	C_dM^{-\frac{a}{2a+d}}.
	\label{eq:jlrn-iid-tensor-rms}
\end{equation}

% Finally, let \(0<\delta<1\), define
% \(L_\delta:=1+\sqrt{2\log(1/\delta)}\), and set
% \[
% 	\rho_{M,\delta}
% 	:=
% 	\Bigl(\frac{L_\delta^2}{M}\Bigr)^{1/(2a+d)}.
% \]
% If \(M\geq L_\delta^2\rho_0^{-(2a+d)}\), then, with probability at
% least \(1-\delta\),
% \begin{equation}
% 	\bigl\|
% 	f-\mathcal Q_{M,\Psi_{\rho_{M,\delta}}^{(d)}}^{\mathrm{iid}}f
% 	\bigr\|_2
% 	\leq
% 	C_d
% 	\Bigl(
% 		\frac{L_\delta^2}{M}
% 	\Bigr)^{a/(2a+d)}.
% 	\label{eq:jlrn-iid-tensor-concentration}
% \end{equation}
\end{corollary}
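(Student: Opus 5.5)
The plan is to specialize Corollary~\ref{cor:jlrn-kernel-concentration} to \(\mathcal H_d=H^s_{\mathrm{mix}}(\T^d)\) and \(\mathcal K=\Psi_\rho^{(d)}\), and then to read off the root-mean-square bound from Theorem~\ref{thm:jlrn-iid-risk}.

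\textbf{Step 1: verifying the hypotheses of Corollary~\ref{cor:jlrn-kernel-concentration}.}
\begin{itemize}
\item The embedding constants come from Parseval and the absolute-summability estimate of Section~\ref{subsec:function-spaces}. Since \(\langle k_j\rangle\geq1\), Parseval gives \(c_{2,d}=1\). The bound \(\sum_{\bk}|\widehat f(\bk)|\leq(1+A_s)^{d/2}\|f\|_{s,\mathrm{mix}}\) gives \(c_{\infty,d}=(1+A_s)^{d/2}\).
\item The normalization \(\widehat{\Psi}_\rho^{(d)}(\boldsymbol 0)=\prod_j\widehat\psi_\rho(0)=1\) follows from Assumption~\ref{ass:jfg-kernel}.
\item Theorem~\ref{thm:jfg-kernel-energy} shows \(\Psi_\rho^{(d)}\in L_2(\T^d)\) with \(E_{\mathcal K}=C_d^{1/2}\rho^{-d/2}\), using \(\kappa>1/4\).
\item Theorem~\ref{thm:jfg-continuous-bias} with \(\rho_j=\rho\) gives \(A_{\mathcal K}=C_d\,d\,\rho^a\).
\end{itemize}

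\textbf{Step 2: the concentration estimate.} Enlarging \(C_d\), condition \eqref{eq:jlrn-iid-tensor-epsilon-condition} implies \eqref{eq:jlrn-kernel-epsilon-condition}. The corollary then gives the tail bound
\[
\exp\bigl(-M\epsilon^2/(8c_{\infty,d}^2E_{\mathcal K}^2)\bigr)=\exp\bigl(-c_dM\rho^d\epsilon^2\bigr),
\qquad c_d:=\bigl(8(1+A_s)^dC_d\bigr)^{-1},
\]
which is \eqref{eq:jlrn-iid-tensor-epsilon-tail}. The constants depend only on \(d\), \(s\), \(m\), \(\kappa\) and the kernel, as required.

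\textbf{Step 3: the RMS bound.} Rather than integrating the tail, I will use the exact identity \eqref{eq:jlrn-iid-exact}. Discarding the negative term gives
\[
\E\|f-\mathcal Q_{M,\Psi_\rho^{(d)}}f\|_2^2\leq\|f-\mathcal C_\rho f\|_2^2+\frac1M\|f\|_2^2\|\Psi_\rho^{(d)}\|_2^2\leq C_d\bigl(\rho^{2a}+M^{-1}\rho^{-d}\bigr),
\]
using \(\|f\|_{s,\mathrm{mix}}\leq1\) together with Steps 1's bias and energy bounds. Setting \(\rho\asymp M^{-1/(2a+d)}\) balances \(\rho^{2a}\) against \(M^{-1}\rho^{-d}\), and both become \(M^{-2a/(2a+d)}\). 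Taking square roots yields \eqref{eq:jlrn-iid-tensor-rms}.

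\textbf{Main obstacle.} The only delicate point is that the chosen \(\rho\) must satisfy \(\rho\leq\rho_0\), which holds only for \(M\) large enough. For the finitely many small \(M\) I will instead take \(\rho=\rho_0\). There the same inequality gives an error bounded by a constant \(C_d\), and since \(M\) is bounded this is at most \(C_dM^{-a/(2a+d)}\) after enlarging \(C_d\). With that adjustment the result holds for all \(M\in\mathbb N\).
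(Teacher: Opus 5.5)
Your proposal is correct and follows essentially the paper's route. The bias and energy bounds come from Theorems~\ref{thm:jfg-continuous-bias} and~\ref{thm:jfg-kernel-energy}, combined with the mixed Sobolev embedding and the bounded-differences concentration result. You invoke Corollary~\ref{cor:jlrn-kernel-concentration} directly, whereas the paper repeats that corollary's argument inline via Theorem~\ref{thm:jlrn-iid-concentration}. The RMS rate then follows from the exact mean-square identity in both. Your explicit constants \(c_{2,d}=1\) and \(c_{\infty,d}=(1+A_s)^{d/2}\), and your handling of the constraint \(\rho\leq\rho_0\) for small \(M\), are minor refinements that the paper leaves implicit.
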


\begin{proof}
By Theorem \ref{thm:jfg-continuous-bias} and Theorem \ref{thm:jfg-kernel-energy}, we obtain
\begin{equation}
	\|f-\mathcal C_{\Psi_\rho^{(d)}}f\|_2
	\leq
	C_d\rho^a\|f\|_{s,\mathrm{mix}},
	\quad
	\|\Psi_\rho^{(d)}\|_2
	\leq
	C_d\rho^{-d/2}.
	\label{eq:jlrn-tensor-bias-energy}
\end{equation}
Since \(s>1/2\), the mixed
Sobolev embedding gives
\[
	\|f\|_2+\|f\|_\infty
	\leq
	C_d\|f\|_{s,\mathrm{mix}}.
\]
For \(\|f\|_{s,\mathrm{mix}}\leq1\), these estimates imply
\begin{equation}
	\sigma_{f,\Psi_\rho^{(d)}}
	\leq
	\|f\|_2\|\Psi_\rho^{(d)}\|_2
	\leq
	C_d\rho^{-d/2},
	\quad
	B_{f,\Psi_\rho^{(d)}}^2
	\leq
	D_d\rho^{-d}
	\label{eq:jlrn-tensor-variance-bounds}
\end{equation}
for some \(D_d>0\).

Enlarging \(C_d\) if necessary,
we obtain from \eqref{eq:jlrn-iid-tensor-epsilon-condition} that
\[
	\|f-\mathcal C_{\Psi_\rho^{(d)}}f\|_2
	+
	\frac{\sigma_{f,\Psi_\rho^{(d)}}}{\sqrt M}
	\leq
	\frac{\epsilon}{2}.
\]
Thus, we get 
\[
	\Bigl\{
		\bigl\|
		f-\mathcal Q_{M,\Psi_\rho^{(d)}}f
		\bigr\|_2
		\geq\epsilon
	\Bigr\}
	\subseteq
	\Bigl\{
		\bigl\|
		f-\mathcal Q_{M,\Psi_\rho^{(d)}}f
		\bigr\|_2
		\geq
		\|f-\mathcal C_{\Psi_\rho^{(d)}}f\|_2
		+
		\frac{\sigma_{f,\Psi_\rho^{(d)}}}{\sqrt M}
		+
		\frac{\epsilon}{2}
	\Bigr\}.
\]
Applying Theorem~\ref{thm:jlrn-iid-concentration} and
\eqref{eq:jlrn-tensor-variance-bounds}, we arrive at
\[
	\mathbb P\Bigl\{
		\bigl\|
		f-\mathcal Q_{M,\Psi_\rho^{(d)}}f
		\bigr\|_2
		\geq\epsilon
	\Bigr\}
	\leq
	\exp\Bigl(
		-\frac{M\rho^d\epsilon^2}{8D_d}
	\Bigr),
\]
which proves \eqref{eq:jlrn-iid-tensor-epsilon-tail} with
\(c_d=(8D_d)^{-1}\).

For the mean-square estimate, Theorem~\ref{thm:jlrn-iid-risk}
together with \eqref{eq:jlrn-tensor-bias-energy} yields
\begin{equation}
	\Bigl(
		\mathbb E
		\bigl\|
		f-\mathcal Q_{M,\Psi_\rho^{(d)}}f
		\bigr\|_2^2
	\Bigr)^{1/2}
	\leq
	C_d\Bigl(
		\rho^a+M^{-1/2}\rho^{-d/2}
	\Bigr).
	\label{eq:jlrn-tensor-rms-general}
\end{equation}
For \(\rho\asymp M^{-1/(2a+d)}\), we have
	$\rho^a
	\asymp
	M^{-1/2}\rho^{-d/2}
	\asymp
	M^{-a/(2a+d)}$.
This completes the proof.
\end{proof}

%The convergence exponent $a/(2a+d)$ in
%\eqref{eq:jlrn-iid-tensor-rms} and
%\eqref{eq:jlrn-iid-tensor-concentration} deteriorates as the dimension
%increases. Independent sampling removes the cardinality constraint $M=n^d$
%associated with a tensor-product grid, but it does not remove the
%dimension dependence generated by the complete-tensor kernel. Indeed, its
%energy estimate contains the factor $\rho^{-d}$:
%\[
%\|\Psi_\rho^{(d)}\|_2^2
%\leq
%C_d\rho^{-d}.
%\]
%Consequently, decreasing $\rho$ reduces the convolution bias while
%simultaneously increasing the sampling variance at a rate that depends
%exponentially on the dimension. In the next section, we will develop the ANOVA constructions, which alleviate this effect by restricting
%the kernel to interactions of bounded order. 

\section{Weight-adapted ANOVA quasi-interpolation}
\label{sec:journal-direct-anova}

For a full tensor-product kernel, the squared \(L_2\)-norm is the product of the univariate kernel energies and can therefore grow rapidly with the ambient dimension. We control this growth by
retaining only those coordinate interactions permitted by the product weights and a prescribed maximal interaction order. The resulting kernel incorporates both coordinate importance and the
interaction structure of the function class. The associated stochastic quasi-interpolant remains an average of kernel translates. Under suitable weight assumptions, however, its sampling variance
admits a bound that depends on the maximal interaction order but is uniform in the ambient dimension.

Throughout this section, suppose that Assumption \ref{ass:jfg-kernel} and Assumption \ref{ass:jfg-kernel1} hold, and
\begin{equation}\label{eq:G_a}
    s>1/2,~a=\min\{m,s\},~G_a:=\sum_{j=1}^\infty \gamma_j^{1/(2a)}<\infty.
\end{equation}

\subsection{Construction of the weight-adapted ANOVA kernel}

Fix $q\in\mathbb N_0$. Let \(0<\rho\leq\rho_0\), and set the threshold
\(\tau_\rho:=(\rho/\rho_0)^{2a}\). We define the active coordinate set and
the family of admissible interactions by
\begin{equation}
	\begin{aligned}
		J_\rho^{(d)}
		&:=
		\{j\in[d]:\gamma_j\geq\tau_\rho\},\\
		\mathfrak U_{q,\rho}^{(d)}
		&:=
		\left\{
		\boldsymbol u\subseteq[d]:
		1\leq|\boldsymbol u|\leq q,\ 
		\gamma_{\boldsymbol u}\geq\tau_\rho
		\right\}.
	\end{aligned}
	\label{eq:jadqi-active-set}
\end{equation}
%Every admissible support is contained in the active coordinate set because \(\gamma_{\boldsymbol u}\leq\gamma_j\) if \(j\in\boldsymbol u\).
For
\(j\in J_\rho^{(d)}\), we have
\(\gamma_j^{1/(2a)}\geq\rho/\rho_0\) and
	$|J_\rho^{(d)}|
	\leq
	\frac{\rho_0G_a}{\rho}$.
We define the anisotropic scale
\begin{equation}
	\rho_j
	:=
	\rho\gamma_j^{-1/(2a)},\quad j\in J_\rho^{(d)}.
	\label{eq:jadqi-anisotropic-scale}
\end{equation}
The definition of the active coordinate set ensures that
\(\rho\leq\rho_j\leq\rho_0\). Thus, less influential coordinates with smaller weights are approximated using coarser scales.

Let \(\psi_{\rho_j}^{\circ}:=\psi_{\rho_j}-1\). The weight-adapted ANOVA kernel is defined by
\begin{equation}
	\Phi_{q,\rho}^{(d)}(\boldsymbol x)
	:=
	1+
	\sum_{\boldsymbol u\in\mathfrak U_{q,\rho}^{(d)}}
	\prod_{j\in\boldsymbol u}
	\psi_{\rho_j}^{\circ}(x_j).
	\label{eq:jadqi-kernel}
\end{equation}
The corresponding convolution operator is
\begin{equation}
	\mathcal C_{q,\rho}^{(d)}f(\boldsymbol x)
	:=
	f*\Phi_{q,\rho}^{(d)}(\boldsymbol x).
	\label{eq:jadqi-convolution-operator}
\end{equation}

Since \(\int_{\T}\psi_{\rho_j}^{\circ}(x_j)dx_j=0\), terms with distinct coordinate supports lie in mutually orthogonal Fourier--ANOVA subspaces. A related centering principle yields identifiable ANOVA decompositions in reproducing kernel Hilbert spaces \cite{DurrandeEtAl2013}. Here, however, the univariate factors act as convolution filters and are not assumed to be positive definite.

\begin{proposition}
	\label{prop:jdaq-kernel-structure}
	The kernel \(\Phi_{q,\rho}^{(d)}\) defined by \eqref{eq:jadqi-kernel} satisfies \(\widehat\Phi_{q,\rho}^{(d)}(\boldsymbol0)=1\), and its Fourier multiplier is given by
	\begin{equation}
		\widehat\Phi_{q,\rho}^{(d)}(\bell)
		=
		\begin{cases}
			1,
			& \bell=\boldsymbol0,\\[1mm]
			\displaystyle
			\prod_{j\in\supp\bell}
			\widehat\psi_{\rho_j}(\ell_j),
			&
			\supp\bell\in
			\mathfrak U_{q,\rho}^{(d)},\\[3mm]
			0,
			& \text{otherwise}.
		\end{cases}
		\label{eq:jdaq-kernel-multiplier}
	\end{equation}
	Moreover, there exists a constant \(C>0\) independent of \(d\) and \(\rho\) such that
	\begin{align}
		\|\Phi_{q,\rho}^{(d)}\|_2^2
		=
		1+
		\sum_{\boldsymbol u\in\mathfrak U_{q,\rho}^{(d)}}
		\prod_{j\in\boldsymbol u}
		\bigl(\|\psi_{\rho_j}\|_2^2-1\bigr)
		\leq
		C\rho^{-q}.
		\label{eq:jdaq-exact-kernel-energy}
	\end{align}
\end{proposition}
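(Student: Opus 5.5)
The multiplier formula comes from computing Fourier coefficients of each summand in \eqref{eq:jadqi-kernel}. For \(\boldsymbol u\subseteq[d]\), the function \(\prod_{j\in\boldsymbol u}\psi^{\circ}_{\rho_j}(x_j)\) is a tensor product whose factors are constant in the coordinates outside \(\boldsymbol u\). Its coefficient at \(\bell\) is therefore \(\prod_{j\in\boldsymbol u}\widehat{\psi^{\circ}_{\rho_j}}(\ell_j)\) when \(\ell_j=0\) for all \(j\notin\boldsymbol u\), and zero otherwise. We have \(\widehat{\psi^{\circ}_{\rho_j}}(0)=\widehat\psi_{\rho_j}(0)-1=0\) by Assumption~\ref{ass:jfg-kernel}, and \(\widehat{\psi^{\circ}_{\rho_j}}(\ell)=\widehat\psi_{\rho_j}(\ell)\) for \(\ell\neq0\). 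Hence the summand indexed by \(\boldsymbol u\) has nonzero coefficients only at \(\bell\) with \(\supp\bell=\boldsymbol u\), where the coefficient is \(\prod_{j\in\boldsymbol u}\widehat\psi_{\rho_j}(\ell_j)\). The constant \(1\) contributes only at \(\bell=\boldsymbol 0\). Summing over \(\boldsymbol u\in\mathfrak U_{q,\rho}^{(d)}\) gives \eqref{eq:jdaq-kernel-multiplier}, and in particular \(\widehat\Phi_{q,\rho}^{(d)}(\boldsymbol0)=1\). Each \(\psi_{\rho_j}\) lies in \(L_2(\T)\) by the proof of Theorem~\ref{thm:jfg-kernel-energy}, so the kernel is in \(L_2\).

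The energy identity then follows from Parseval's identity. The frequencies group by support, and for \(\supp\bell=\boldsymbol u\) the squared coefficient factorizes. This gives
\[
\|\Phi_{q,\rho}^{(d)}\|_2^2=1+\sum_{\boldsymbol u\in\mathfrak U_{q,\rho}^{(d)}}\prod_{j\in\boldsymbol u}\sum_{\ell\neq0}|\widehat\psi_{\rho_j}(\ell)|^2 ,
\]
and \(\sum_{\ell\neq0}|\widehat\psi_{\rho_j}(\ell)|^2=\|\psi_{\rho_j}\|_2^2-1\).

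For the upper bound, I would use the estimate \(\|\psi_{\rho_j}\|_2^2-1\leq C_\kappa\rho_j^{-1}=C_\kappa\rho^{-1}\gamma_j^{1/(2a)}\) from the proof of Theorem~\ref{thm:jfg-kernel-energy}; this requires \(\kappa>1/4\), which I take as part of the standing assumptions. The factors are nonnegative, so each product is at most \(C_\kappa^{|\boldsymbol u|}\rho^{-|\boldsymbol u|}\prod_{j\in\boldsymbol u}\gamma_j^{1/(2a)}\). Grouping by \(k=|\boldsymbol u|\) and dropping the constraint \(\gamma_{\boldsymbol u}\geq\tau_\rho\) only enlarges the sum. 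We then use
\[
\sum_{|\boldsymbol u|=k}\prod_{j\in\boldsymbol u}\gamma_j^{1/(2a)}\leq \frac{1}{k!}\Big(\sum_{j=1}^d\gamma_j^{1/(2a)}\Big)^k\leq \frac{G_a^k}{k!}.
\]
This yields \(\|\Phi_{q,\rho}^{(d)}\|_2^2\leq 1+\sum_{k=1}^q (C_\kappa G_a)^k\rho^{-k}/k!\). Since \(\rho\leq\rho_0\leq1\), we have \(\rho^{-k}\leq\rho^{-q}\) for \(k\leq q\), and also \(1\leq\rho^{-q}\). This gives \(C\rho^{-q}\) with \(C=\sum_{k=0}^q(C_\kappa G_a)^k/k!\), which is independent of \(d\) and \(\rho\).

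The only delicate point is getting a dimension-free bound. It relies on the elementary-symmetric-sum inequality together with the summability \(G_a<\infty\) from \eqref{eq:G_a}; the active-set restriction is not needed for this bound. One should also check that the constant in \(\|\psi_{\rho_j}\|_2^2\lesssim\rho_j^{-1}\) is uniform over \(\rho_j\in[\rho,\rho_0]\). Assumption~\ref{ass:jfg-kernel1} guarantees this, since its constant is independent of the scale.
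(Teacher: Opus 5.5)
Your proposal is correct and follows essentially the same route as the paper: the centered factors give each summand Fourier support exactly on \(\{\bell:\supp\bell=\boldsymbol u\}\), Parseval (equivalently, orthogonality of the summands) yields the exact energy identity, and the bound \(\|\psi_{\rho_j}\|_2^2-1\le C\rho^{-1}\gamma_j^{1/(2a)}\), after enlarging to all subsets of size at most \(q\) and applying the elementary symmetric sum estimate with \(G_a<\infty\), gives \(C\rho^{-q}\) uniformly in \(d\). You also rightly flag that \(\kappa>1/4\) is needed; the paper uses this tacitly through Theorem~\ref{thm:jfg-kernel-energy}.
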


\begin{proof}
   For each \(j\in J_\rho^{(d)}\), the centered factor gives \(\widehat{\psi_{\rho_j}^{\circ}}(0)=0\), whereas \(\widehat{\psi_{\rho_j}^{\circ}}(k)=\widehat{\psi_{\rho_j}}(k)\) for \(k\neq0\). Thus the term indexed by \(\boldsymbol u\) has nonzero Fourier coefficients only at frequencies \(\boldsymbol\ell\) with \(\operatorname{supp}\boldsymbol\ell=\boldsymbol u\). Terms indexed by distinct sets therefore have disjoint Fourier supports and are orthogonal in \(L_2(\mathbb T^d)\). This gives \eqref{eq:jdaq-kernel-multiplier}.

	The same orthogonality also yields
	\[
	\|\Phi_{q,\rho}^{(d)}\|_2^2
	=
	1+
	\sum_{\boldsymbol u\in\mathfrak U_{q,\rho}^{(d)}}
	\prod_{j\in\boldsymbol u}
	\|\psi_{\rho_j}^{\circ}\|_2^2.
	\]
	Since each \(\psi_{\rho_j}\) is normalized, we obtain
	\[
	\|\psi_{\rho_j}^{\circ}\|_2^2
	=
	\|\psi_{\rho_j}-1\|_2^2
	=
	\|\psi_{\rho_j}\|_2^2-1,
	\]
	which proves the identity in
	\eqref{eq:jdaq-exact-kernel-energy}.
	
	By Theorem \ref{thm:jfg-kernel-energy} with $d=1$, we have
	\[
	\|\psi_{\rho_j}\|_2^2-1
	\leq
	C\rho_j^{-1}
	=
	C\rho^{-1}\gamma_j^{1/(2a)}.
	\]
	Enlarging the sum to all subsets of \([d]\) of cardinality at most
	\(q\), we can estimate
	\begin{align*}
		\|\Phi_{q,\rho}^{(d)}\|_2^2
		&\leq
		\sum_{r=0}^q
		\Big(\frac{C}{\rho}\Big)^r
		\sum_{\substack{
				\boldsymbol u\subseteq[d]\\
				|\boldsymbol u|=r}}
		\prod_{j\in\boldsymbol u}
		\gamma_j^{1/(2a)}
		\\
		&\leq
		\sum_{r=0}^q
		\frac{1}{r!}
		\Big(
		\frac{C}{\rho}
		\sum_{j=1}^{\infty}
		\gamma_j^{1/(2a)}
		\Big)^r
		\\
		&=
		\sum_{r=0}^q
		\frac{1}{r!}
		\Big(\frac{CG_a}{\rho}\Big)^r
		\leq
		C\rho^{-q},
	\end{align*}
	where in the second inequality we used the standard bound for elementary symmetric polynomials. 
\end{proof}

The anisotropic scales in \eqref{eq:jadqi-anisotropic-scale} balance the coordinatewise approximation errors against the weights, which yields the following convolution estimate. 

\begin{theorem}
	\label{thm:jdaq-convolution-bias}
	Let
	\(f\in H^s_{\mathrm{Kor},\boldsymbol\gamma,q}(\T^d)\) and
	\(0<\rho\leq\rho_0\). Then there exists a constant \(C>0\) independent of \(d\) and \(\rho\) such that
	\begin{equation}
		\|f-\mathcal C_{q,\rho}^{(d)}f\|_2
		\leq
		C\rho^a\|f\|_{s,\boldsymbol\gamma}.
		\label{eq:jdaq-convolution-bias}
	\end{equation}
\end{theorem}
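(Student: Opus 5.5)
The proof is a frequency-by-frequency multiplier estimate followed by Parseval's identity. By Proposition~\ref{prop:jdaq-kernel-structure},
\[
\|f-\mathcal C_{q,\rho}^{(d)}f\|_2^2=\sum_{\bell\in\Z^d}\bigl|1-\widehat\Phi_{q,\rho}^{(d)}(\bell)\bigr|^2|\widehat f(\bell)|^2 .
\]
Since \(f\in H^s_{\mathrm{Kor},\boldsymbol\gamma,q}(\T^d)\), only frequencies with \(|\supp\bell|\le q\) contribute, and \(\bell=\boldsymbol0\) contributes nothing. It therefore suffices to show, for every \(\bell\ne\boldsymbol 0\) with \(\boldsymbol u:=\supp\bell\) and \(|\boldsymbol u|\le q\), that
\[
\bigl|1-\widehat\Phi_{q,\rho}^{(d)}(\bell)\bigr|^2\le C\rho^{2a}\gamma_{\boldsymbol u}^{-1}\prod_{j\in\boldsymbol u}\langle\ell_j\rangle^{2s},
\]
with \(C\) depending only on \(q\), \(c_1\), \(\rho_0\) and \(a\). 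Summing this bound against \(|\widehat f(\bell)|^2\) then gives exactly \(C\rho^{2a}\|f\|_{s,\boldsymbol\gamma}^2\), with no dependence on \(d\).

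\emph{Case 1: \(\boldsymbol u\notin\mathfrak U_{q,\rho}^{(d)}\).} Since \(|\boldsymbol u|\le q\), this means \(\gamma_{\boldsymbol u}<\tau_\rho=(\rho/\rho_0)^{2a}\). The multiplier vanishes, so the error factor equals \(1\). We then bound
\[
1\le \tau_\rho\gamma_{\boldsymbol u}^{-1}=\rho_0^{-2a}\rho^{2a}\gamma_{\boldsymbol u}^{-1}\le \rho_0^{-2a}\rho^{2a}\gamma_{\boldsymbol u}^{-1}\prod_{j\in\boldsymbol u}\langle\ell_j\rangle^{2s},
\]
which is of the required form.

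\emph{Case 2: \(\boldsymbol u\in\mathfrak U_{q,\rho}^{(d)}\).} Here every \(j\in\boldsymbol u\) satisfies \(\gamma_j\ge\gamma_{\boldsymbol u}\ge\tau_\rho\), so \(j\in J_\rho^{(d)}\) and \(\rho_j\) is defined with \(\rho\le\rho_j\le\rho_0\le1\). As in the proof of Theorem~\ref{thm:jfg-continuous-bias}, we use the telescoping identity together with \(|\widehat\psi_{\rho_j}|\le 1+c_1\). Because the product has at most \(q\) factors, the telescoping constant is \(C_q\) rather than \(C_d\), giving
\[
\Bigl|1-\prod_{j\in\boldsymbol u}\widehat\psi_{\rho_j}(\ell_j)\Bigr|\le C_q\sum_{j\in\boldsymbol u}\min\{1,(\rho_j|\ell_j|)^m\}\le C_q\sum_{j\in\boldsymbol u}\rho_j^a\langle\ell_j\rangle^s ,
\]
where the last step is the same elementary inequality as before with \(a=\min\{m,s\}\). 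Next we insert \(\rho_j^a=\rho^a\gamma_j^{-1/2}\), and use \(\gamma_j^{-1/2}\le\gamma_{\boldsymbol u}^{-1/2}\), which holds because all weights are at most \(1\). Bounding \(\langle\ell_j\rangle^s\le\prod_{i\in\boldsymbol u}\langle\ell_i\rangle^s\) and noting the sum has at most \(q\) terms, we obtain
\[
\bigl|1-\widehat\Phi_{q,\rho}^{(d)}(\bell)\bigr|\le qC_q\,\rho^a\gamma_{\boldsymbol u}^{-1/2}\prod_{i\in\boldsymbol u}\langle\ell_i\rangle^s .
\]
Squaring gives the required bound.

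The main point requiring care is dimension-independence, and two observations handle it. First, the telescoping constant depends only on \(|\boldsymbol u|\le q\), not on \(d\). Second, the anisotropic scale \eqref{eq:jadqi-anisotropic-scale} is designed so that \(\rho_j^{2a}=\rho^{2a}\gamma_j^{-1}\), which converts each coordinatewise error into the weighted Korobov factor. The truncation threshold \(\tau_\rho\) then controls the discarded interactions through the inequality \(1\le\tau_\rho/\gamma_{\boldsymbol u}\).
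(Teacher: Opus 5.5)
Your proposal is correct and follows the paper's proof essentially step for step. It uses the same split into admissible and discarded supports, the same telescoping bound with a constant depending only on \(|\boldsymbol u|\le q\), and the same anisotropic-scale cancellation \(\gamma_{\boldsymbol u}^{1/2}\rho_j^a=\rho^a\gamma_{\boldsymbol u}^{1/2}\gamma_j^{-1/2}\le\rho^a\), which you phrase as \(\gamma_j^{-1/2}\le\gamma_{\boldsymbol u}^{-1/2}\); like the paper, you then use the threshold bound \(\gamma_{\boldsymbol u}<\tau_\rho\) for discarded interactions and finish with Parseval.
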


\begin{proof}
	Fix \(\bell\neq\boldsymbol0\), and write
	\(\boldsymbol u:=\supp\bell\). Since \(f\) has superposition
	order at most \(q\), it suffices to consider
	\(1\leq|\boldsymbol u|\leq q\).
	
	First suppose that
	\(\boldsymbol u\in\mathfrak U_{q,\rho}^{(d)}\). A telescoping
	decomposition of the product in
	\eqref{eq:jdaq-kernel-multiplier} together with
	Assumption~\ref{ass:jfg-kernel} gives
	\begin{equation}
		\Big|
		1-
		\prod_{j\in\boldsymbol u}
		\widehat\psi_{\rho_j}(\ell_j)
		\Big|
		\leq
		C\sum_{j\in\boldsymbol u}
		\rho_j^a|\ell_j|^a.
		\label{eq:jdaq-product-multiplier-error}
	\end{equation}
	Because \(a\leq s\), for each \(j\in\boldsymbol u\),
	\[
	|\ell_j|^a
	\prod_{i\in\boldsymbol u}
	\langle \ell_i\rangle^{-s}
	\leq1.
	\]
	It follows that
	\[
	\prod_{j\in\boldsymbol u}
	\langle \ell_j\rangle^{-s}
	\Big|
	1-
	\widehat\Phi_{q,\rho}^{(d)}(\bell)
	\Big|
	\leq
	C\sum_{j\in\boldsymbol u}\rho_j^a.
	\]
	The anisotropic scale choice now gives
	\begin{align}
		\gamma_{\boldsymbol u}^{1/2}
		\sum_{j\in\boldsymbol u}\rho_j^a
		=
		\rho^a
		\sum_{j\in\boldsymbol u}
		\gamma_{\boldsymbol u}^{1/2}\gamma_j^{-1/2}
		=
		\rho^a
		\sum_{j\in\boldsymbol u}
		\prod_{i\in\boldsymbol u\setminus\{j\}}
		\gamma_i^{1/2}
		\leq
		q\rho^a.
		\label{eq:jdaq-scale-weight-cancellation}
	\end{align}
	Hence, we derive
	\begin{equation}
		\gamma_{\boldsymbol u}^{1/2}
		\prod_{j\in\boldsymbol u}
		\langle \ell_j\rangle^{-s}
		\Big|
		1-
		\widehat\Phi_{q,\rho}^{(d)}(\bell)
		\Big|
		\leq
		C\rho^a.
		\label{eq:jdaq-retained-multiplier-error}
	\end{equation}
	
	Next suppose that \(1\leq|\boldsymbol u|\leq q\), but
	\(\boldsymbol u\notin\mathfrak U_{q,\rho}^{(d)}\). In this case,
	\(\gamma_{\boldsymbol u}<\tau_\rho\) and
	\(\widehat\Phi_{q,\rho}^{(d)}(\bell)=0\). We get
	\[
	\gamma_{\boldsymbol u}^{1/2}
	<
	\tau_\rho^{1/2}
	=
	\Big(\frac{\rho}{\rho_0}\Big)^a.
	\]
	Since
	\(\prod_{j\in\boldsymbol u}\langle \ell_j\rangle^{-s}\leq1\), we again obtain
	\[
	\gamma_{\boldsymbol u}^{1/2}
	\prod_{j\in\boldsymbol u}
	\langle \ell_j\rangle^{-s}
	\Big|
	1-
	\widehat\Phi_{q,\rho}^{(d)}(\bell)
	\Big|
	\leq
	C\rho^a.
	\]
	
	Thus, for every nonzero frequency $\bell$ with
	\(|\supp\bell|\leq q\), we get
	\begin{equation}
		\Big|
		1-
		\widehat\Phi_{q,\rho}^{(d)}(\bell)
		\Big|
		\leq
		C\rho^a
		\gamma_{\supp\bell}^{-1/2}
		\prod_{j\in\supp\bell}
		\langle \ell_j\rangle^s.
		\label{eq:jdaq-uniform-multiplier-error}
	\end{equation}
	Parseval's identity and the superposition assumption lead to
	\begin{align*}
		\|f-\mathcal C_{q,\rho}^{(d)}f\|_2^2
		&=
		\sum_{\bell\in\mathbb Z^d}
		\Big|
		1-
		\widehat\Phi_{q,\rho}^{(d)}(\bell)
		\Big|^2
		|\widehat f(\bell)|^2
		\\
		&\leq
		C\rho^{2a}
		\sum_{\bell\in\mathbb Z^d}
		\gamma_{\supp\bell}^{-1}
		\prod_{j\in\supp\bell}
		\langle \ell_j\rangle^{2s}
		|\widehat f(\bell)|^2
		\\
		&=
		C\rho^{2a}
		\|f\|_{s,\boldsymbol\gamma}^2,
	\end{align*}
	which completes the proof.
\end{proof}

\subsection{Stochastic quasi-interpolation with ANOVA kernels}

Discretizing the convolution integral \eqref{eq:jadqi-convolution-operator} with random samples \(\boldsymbol X_1,\ldots,\boldsymbol X_M\), we obtain the stochastic ANOVA quasi-interpolant
\begin{equation}
	\mathcal Q_{M,q,\rho}^{(d)}f
	:=
	\frac{1}{M}
	\sum_{j=1}^M
	f(\boldsymbol X_j)
	\Phi_{q,\rho}^{(d)}(\,\cdot-\boldsymbol X_j).
	\label{eq:jdaq-noiseless-rule}
\end{equation}
Since \(\Phi_{q,\rho}^{(d)}\) is normalized, the general unbiasedness identity gives
$\E\mathcal Q_{M,q,\rho}^{(d)}f=\mathcal C_{q,\rho}^{(d)}f$.

The following result shows the mean-square error and a dimension-uniform upper bound.

\begin{theorem}
\label{thm:jadqi-dimension-uniform}
Assume that \eqref{eq:G_a} holds. Let \(d,M\in\mathbb N\),
\(f\in H^s_{\mathrm{Kor},\boldsymbol\gamma,q}(\T^d)\), and
\(0<\rho\leq\rho_0\). Then there exists a constant \(C>0\) independent of \(d\), \(M\), \(\rho\), such that
\begin{align}
\E
\bigl\|f-\mathcal Q_{M,q,\rho}^{(d)}f\bigr\|_2^2
&=
\bigl\|f-\mathcal C_{q,\rho}^{(d)}f\bigr\|_2^2
+
\frac{1}{M}
\left(
\|f\|_2^2\|\Phi_{q,\rho}^{(d)}\|_2^2
-
\|\mathcal C_{q,\rho}^{(d)}f\|_2^2
\right)
\notag\\
&\leq
C\left(
\rho^{2a}
+
\frac{\rho^{-q}}{M}
\right)
\|f\|_{s,\boldsymbol\gamma}^2.
\label{eq:jdaq-exact-iid-risk}
\end{align}
\end{theorem}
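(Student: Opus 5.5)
The plan is to combine three earlier results: the mean-square identity of Theorem~\ref{thm:jlrn-iid-risk}, the bias bound of Theorem~\ref{thm:jdaq-convolution-bias}, and the energy bound of Proposition~\ref{prop:jdaq-kernel-structure}. First I will check that Theorem~\ref{thm:jlrn-iid-risk} applies with \(\calK=\Phi_{q,\rho}^{(d)}\). Proposition~\ref{prop:jdaq-kernel-structure} gives \(\widehat\Phi_{q,\rho}^{(d)}(\boldsymbol0)=1\). It also gives \(\Phi_{q,\rho}^{(d)}\in L_2(\T^d)\), since the kernel is a finite sum of products of the \(L_2\) functions \(\psi^\circ_{\rho_j}\) (Theorem~\ref{thm:jfg-kernel-energy} with \(d=1\)). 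Moreover \(f\in C(\T^d)\) by Lemma~\ref{lem:jdaq-uniform-embedding}, because \(\sum_j\gamma_j\le G_a<\infty\). With these hypotheses in place, Theorem~\ref{thm:jlrn-iid-risk} gives the stated identity in \eqref{eq:jdaq-exact-iid-risk} verbatim.

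For the inequality, I will drop the nonpositive term \(-\|\mathcal C_{q,\rho}^{(d)}f\|_2^2/M\) and estimate the remaining two terms separately.
\begin{itemize}
\item The bias term satisfies \(\|f-\mathcal C_{q,\rho}^{(d)}f\|_2^2\le C\rho^{2a}\|f\|_{s,\boldsymbol\gamma}^2\) directly by Theorem~\ref{thm:jdaq-convolution-bias}.
\item The variance term is at most \(\|f\|_2^2\|\Phi_{q,\rho}^{(d)}\|_2^2/M\). By \eqref{eq:jdaq-exact-kernel-energy}, \(\|\Phi_{q,\rho}^{(d)}\|_2^2\le C\rho^{-q}\) with \(C\) independent of \(d\) and \(\rho\).
\end{itemize}

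It remains to bound \(\|f\|_2\) by \(\|f\|_{s,\boldsymbol\gamma}\) uniformly in \(d\). By Parseval, \(\|f\|_2^2=\sum_{\bk}|\widehat f(\bk)|^2\). Each weight in the Korobov norm satisfies \(\gamma_{\supp\bk}^{-1}\prod_{j\in\supp\bk}\langle k_j\rangle^{2s}\ge1\), because \(\gamma_j\le1\) and \(\langle k_j\rangle\ge1\). Hence \(\|f\|_2\le\|f\|_{s,\boldsymbol\gamma}\) with constant one. Summing the two bounds then gives \(C(\rho^{2a}+\rho^{-q}/M)\|f\|_{s,\boldsymbol\gamma}^2\).

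I expect no real obstacle, since the substantive work is already in Proposition~\ref{prop:jdaq-kernel-structure} and Theorem~\ref{thm:jdaq-convolution-bias}. The only points needing care are confirming the hypotheses of Theorem~\ref{thm:jlrn-iid-risk}, chiefly continuity of \(f\), and confirming that every constant is dimension-free. Those constants depend only on \(q\), \(G_a\), \(\rho_0\) and the kernel, and \(\rho\le\rho_0\le1\) makes \(\sum_{r\le q}(CG_a/\rho)^r/r!\lesssim\rho^{-q}\).
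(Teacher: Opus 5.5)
Your proposal is correct and follows the paper's proof essentially step by step. You verify the hypotheses of Theorem~\ref{thm:jlrn-iid-risk} via Lemma~\ref{lem:jdaq-uniform-embedding} and Proposition~\ref{prop:jdaq-kernel-structure}, drop the nonpositive term, and then bound the bias by Theorem~\ref{thm:jdaq-convolution-bias} and the variance by \eqref{eq:jdaq-exact-kernel-energy} together with \(\|f\|_2\le\|f\|_{s,\boldsymbol\gamma}\) (continuity of \(f\) already follows from the finite-dimensional bound \eqref{eq:jdaq-finite-dimensional-embedding}, without using summability of the weights).
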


\begin{proof}
Lemma~\ref{lem:jdaq-uniform-embedding} ensures that \(f\) has a continuous representative. Proposition~\ref{prop:jdaq-kernel-structure} shows that
\(\Phi_{q,\rho}^{(d)}\in L_2(\T^d)\) and
\(\widehat\Phi_{q,\rho}^{(d)}(\boldsymbol0)=1\).
Thus Theorem~\ref{thm:jlrn-iid-risk} applies with
\(\calK=\Phi_{q,\rho}^{(d)}\).

For the upper bound, omit the nonpositive term
\(-M^{-1}\|\mathcal C_{q,\rho}^{(d)}f\|_2^2\).
Theorem~\ref{thm:jdaq-convolution-bias} gives
\[
\|f-\mathcal C_{q,\rho}^{(d)}f\|_2^2
\leq
C\rho^{2a}\|f\|_{s,\boldsymbol\gamma}^2.
\]
Moreover, \(\|f\|_2\leq\|f\|_{s,\boldsymbol\gamma}\) by definition of the weighted Korobov norm, so Proposition~\ref{prop:jdaq-kernel-structure} yields
\[
\frac{1}{M}\|f\|_2^2\|\Phi_{q,\rho}^{(d)}\|_2^2
\leq
\frac{C\rho^{-q}}{M}\|f\|_{s,\boldsymbol\gamma}^2.
\]
Together, these bounds give the inequality in
\eqref{eq:jdaq-exact-iid-risk}.	
\end{proof}

%Balancing the bias and variance gives the following root-mean-square convergence rate.

\begin{corollary}
	\label{cor:jadqi-concentration}
	Assume that \eqref{eq:G_a} holds. Let
	\(d,M\in\mathbb N\), \(0<\rho\leq\rho_0\), \(\epsilon>0\), and
	\(f\in H^s_{\mathrm{Kor},\boldsymbol\gamma,q}(\T^d)\) with
	\(\|f\|_{s,\boldsymbol\gamma}\leq1\). There exist
	constants \(C,c>0\), independent of \(d\), \(M\), \(\rho\),
	\(\epsilon\), and \(f\), with the following property: if
	\begin{equation}
		C\left(
		\rho^a+
		\frac{\rho^{-q/2}}{\sqrt M}
		\right)
		\leq
		\frac{\epsilon}{2},
		\label{eq:jadqi-concentration-condition}
	\end{equation}
	then we have
	\begin{equation}
		\mathbb P\left\{
			\bigl\|
			f-\mathcal Q_{M,q,\rho}^{(d)}f
			\bigr\|_2
			\geq\epsilon
		\right\}
		\leq
		\exp\bigl(-cM\rho^q\epsilon^2\bigr).
		\label{eq:jadqi-concentration}
	\end{equation}
\end{corollary}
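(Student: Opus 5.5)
This is a direct specialization of Corollary~\ref{cor:jlrn-kernel-concentration} with $\mathcal H_d:=H^s_{\mathrm{Kor},\boldsymbol\gamma,q}(\T^d)$ and $\mathcal K:=\Phi_{q,\rho}^{(d)}$. The task is to check its hypotheses with constants that do not depend on $d$.

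\emph{Normalization and integrability.} Proposition~\ref{prop:jdaq-kernel-structure} gives $\widehat\Phi_{q,\rho}^{(d)}(\boldsymbol 0)=1$ and $\Phi_{q,\rho}^{(d)}\in L_2(\T^d)$.

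\emph{Embedding constants.} By definition of the weighted Korobov norm, $\|f\|_2\leq\|f\|_{s,\boldsymbol\gamma}$, so $c_{2,d}=1$. Condition \eqref{eq:G_a} gives $G_a<\infty$ with $a>1/2$, since $a=\min\{m,s\}$, $s>1/2$ and $m\geq2$. Lemma~\ref{lem:jdaq-uniform-embedding} therefore yields $\|f\|_\infty\leq\exp(A_sG_a/2)\|f\|_{s,\boldsymbol\gamma}$. So $c_{\infty,d}:=\exp(A_sG_a/2)$ works for every $d$, and the same lemma shows that $\mathcal H_d$ embeds continuously into $C(\T^d)$.

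\emph{Bias and energy constants.} Theorem~\ref{thm:jdaq-convolution-bias} provides the bias bound with $A_{\mathcal K}=C_1\rho^a$. Proposition~\ref{prop:jdaq-kernel-structure} provides $\|\Phi_{q,\rho}^{(d)}\|_2\leq E_{\mathcal K}:=C_2^{1/2}\rho^{-q/2}$. Both $C_1$ and $C_2$ are independent of $d$ and $\rho$.

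\emph{Applying the corollary.} Set $C:=\max\{C_1,C_2^{1/2}\}$. Then \eqref{eq:jadqi-concentration-condition} implies
\[
A_{\mathcal K}+\frac{c_{2,d}E_{\mathcal K}}{\sqrt M}\leq C\Big(\rho^a+\frac{\rho^{-q/2}}{\sqrt M}\Big)\leq\frac{\epsilon}{2},
\]
which is exactly \eqref{eq:jlrn-kernel-epsilon-condition}. Corollary~\ref{cor:jlrn-kernel-concentration} then gives the tail bound
\[
\exp\Big(-\frac{M\epsilon^2}{8c_{\infty,d}^2E_{\mathcal K}^2}\Big)=\exp\Big(-\frac{M\rho^q\epsilon^2}{8C_2\exp(A_sG_a)}\Big).
\]
This is \eqref{eq:jadqi-concentration} with $c:=(8C_2e^{A_sG_a})^{-1}$, which is independent of $d$, $M$, $\rho$, $\epsilon$ and $f$.

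\emph{Main obstacle.} The only delicate point is that the constants really are dimension-free. The sup-norm embedding constant has to come from the summability $G_a<\infty$ via Lemma~\ref{lem:jdaq-uniform-embedding}, not from the mixed-Sobolev embedding, whose constant grows with $d$. The energy bound from Proposition~\ref{prop:jdaq-kernel-structure} must be the uniform one, $C\rho^{-q}$. Both are already established, so the remaining work is bookkeeping.
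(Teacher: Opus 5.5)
Your proposal is correct and is essentially the paper's own proof. It uses the same specialization of Corollary~\ref{cor:jlrn-kernel-concentration} with $\mathcal H_d=H^s_{\mathrm{Kor},\boldsymbol\gamma,q}(\T^d)$, $\mathcal K=\Phi_{q,\rho}^{(d)}$, $c_{2,d}=1$, $c_{\infty,d}=\exp(A_sG_a/2)$, $C\geq\max\{C_{\mathrm b},C_\Phi\}$ and $c=(8C_\Phi^2e^{A_sG_a})^{-1}$, where your $C_2^{1/2}$ is the paper's $C_\Phi$; the only cosmetic point is that $a>1/2$ already follows from $m\geq1$ in Assumption~\ref{ass:jfg-kernel} together with $s>1/2$, so invoking $m\geq2$ is unnecessary.
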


\begin{proof}
	According to Corollary~\ref{cor:jlrn-kernel-concentration}, we can take $\mathcal H_d=H^s_{\mathrm{Kor},\boldsymbol\gamma,q}(\T^d)$ and $\calK=\Phi_{q,\rho}^{(d)}$.
	Then, the definition of the weighted Korobov norm and
	Lemma~\ref{lem:jdaq-uniform-embedding} give
	\[
		\|f\|_2\leq\|f\|_{s,\boldsymbol\gamma},
		\quad
		\|f\|_\infty
		\leq
		\exp\Big(\frac{A_sG_a}{2}\Big)
		\|f\|_{s,\boldsymbol\gamma}.
	\]
	Theorem~\ref{thm:jdaq-convolution-bias} and
	Proposition~\ref{prop:jdaq-kernel-structure} supply constants \(C_{\mathrm b},C_\Phi>0\) for which
	\[
		\|f-\mathcal C_{q,\rho}^{(d)}f\|_2
		\leq
		C_{\mathrm b}\rho^a\|f\|_{s,\boldsymbol\gamma},
		\quad
		\|\Phi_{q,\rho}^{(d)}\|_2
		\leq
		C_\Phi\rho^{-q/2}.
	\]
	Furthermore, we employ the following constants in Corollary~\ref{cor:jlrn-kernel-concentration}:
	\[
		c_{2,d}=1,
		\quad
		c_{\infty,d}=\exp\Big(\frac{A_sG_a}{2}\Big),
		\quad
		A_{\calK}=C_{\mathrm b}\rho^a,
		\quad
		E_{\calK}=C_\Phi\rho^{-q/2}.
	\]
	Taking
	\(C\geq\max\{C_{\mathrm b},C_\Phi\}\) makes
	\eqref{eq:jadqi-concentration-condition} sufficient for its threshold condition. The conclusion \eqref{eq:jadqi-concentration} then holds with $c=(8C_\Phi^2\exp(A_sG_a))^{-1}$.
\end{proof}

For the mean-square error, the choice
\(\rho\asymp M^{-1/(2a+q)}\) balances the two terms in
\eqref{eq:jdaq-exact-iid-risk}.

\begin{corollary}
	\label{cor:jadqi-rate}
	Assume that \eqref{eq:G_a} holds. Let
	\(d,M\in\mathbb N\) and set
	\(\rho \asymp M^{-1/(2a+q)}\). Then there exists a constant \(C>0\) independent of \(d\) and \(M\) such that
	\begin{equation}
		\sup_{\substack{
				f\in
				H^s_{\mathrm{Kor},\boldsymbol\gamma,q}(\T^d)\\
				\|f\|_{s,\boldsymbol\gamma}\leq1}}
		\Big(
		\E
		\bigl\|
		f-\mathcal Q_{M,q,\rho}^{(d)}f
		\bigr\|_2^2
		\Big)^{1/2}
		\leq
		CM^{-\frac{a}{2a+q}}.
		\label{eq:jadqi-rate}
	\end{equation}
\end{corollary}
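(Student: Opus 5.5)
The proof follows directly from Theorem~\ref{thm:jadqi-dimension-uniform}: we take the uniform bound there and optimize over $\rho$. Fix $f\in H^s_{\mathrm{Kor},\boldsymbol\gamma,q}(\T^d)$ with $\|f\|_{s,\boldsymbol\gamma}\leq1$. Inequality \eqref{eq:jdaq-exact-iid-risk} gives
\[
\E\bigl\|f-\mathcal Q_{M,q,\rho}^{(d)}f\bigr\|_2^2\leq C\Big(\rho^{2a}+\frac{\rho^{-q}}{M}\Big),
\]
with $C$ independent of $d$, $M$, $\rho$ and $f$. Because the right-hand side does not depend on $f$, the same bound holds for the supremum over the unit ball.

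Next we substitute the scale. Take $\rho=c_0M^{-1/(2a+q)}$ for a fixed $c_0>0$. Then $\rho^{2a}=c_0^{2a}M^{-2a/(2a+q)}$. Also
\[
\rho^{-q}/M=c_0^{-q}M^{q/(2a+q)-1}=c_0^{-q}M^{-2a/(2a+q)}.
\]
Both terms are therefore of order $M^{-2a/(2a+q)}$. Taking square roots yields \eqref{eq:jadqi-rate}, with a constant depending only on $C$, $c_0$, $a$ and $q$, and hence not on $d$ or $M$.

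The only point needing care is that Theorem~\ref{thm:jadqi-dimension-uniform} requires $0<\rho\leq\rho_0$. I would handle this by defining $\rho:=\min\{\rho_0,c_0M^{-1/(2a+q)}\}$, which is compatible with the relation $\rho\asymp M^{-1/(2a+q)}$. If $c_0M^{-1/(2a+q)}\leq\rho_0$, the computation above applies unchanged. Otherwise $M<(c_0/\rho_0)^{2a+q}$, so $M$ lies in a bounded range. In that case, with $\rho=\rho_0$, the bound is $C(\rho_0^{2a}+\rho_0^{-q})$. Since $M^{-a/(2a+q)}\geq\rho_0/c_0$ on this range, this is at most a fixed constant times $M^{-a/(2a+q)}$. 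Taking the larger constant from the two cases completes the argument. No real obstacle is expected beyond keeping track of this admissibility of $\rho$ and checking that no constant picks up dependence on $d$, which is guaranteed by Theorem~\ref{thm:jadqi-dimension-uniform} under \eqref{eq:G_a}.
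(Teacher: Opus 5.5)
Your proposal is correct and is the paper's argument. The paper gives no separate proof, only the remark that $\rho\asymp M^{-1/(2a+q)}$ balances the two terms in \eqref{eq:jdaq-exact-iid-risk} of Theorem~\ref{thm:jadqi-dimension-uniform}; your exponent computation and your handling of the constraint $\rho\leq\rho_0$ fill in that remark. One harmless slip: on the bounded range $M<(c_0/\rho_0)^{2a+q}$ the correct lower bound is $M^{-a/(2a+q)}>(\rho_0/c_0)^{a}$, not $\rho_0/c_0$ (the former is smaller when $a>1$), but any positive lower bound suffices for your conclusion.
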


\subsection{Recursive evaluation of the ANOVA kernel}

Since \(0<\gamma_j\leq1\), every nonempty subset of an admissible support is admissible. We use this downward closure to evaluate the ANOVA kernel by traversing a tree with node set
\(\mathfrak U_{q,\rho}^{(d)}\cup\{\emptyset\}\) and root \(\emptyset\).
With \(\gamma_{\emptyset}:=1\) and \(\max\emptyset:=0\), the children of a node \(\boldsymbol u\) are
\[
\boldsymbol u\cup\{j\},
\qquad
j\in J_\rho^{(d)},\quad
j>\max\boldsymbol u,\quad
|\boldsymbol u|<q,\quad
\gamma_{\boldsymbol u}\gamma_j\geq\tau_\rho.
\]
Thus each nonempty support has the unique parent
\(\boldsymbol u\setminus\{\max\boldsymbol u\}\).

For fixed \(\boldsymbol x,\boldsymbol y\in\T^d\), set
\[
g_j:=\psi_{\rho_j}(x_j-y_j)-1,
\qquad j\in J_\rho^{(d)},
\]
and initialize \(P(\emptyset)=1\). Along an edge from \(\boldsymbol u\) to \(\boldsymbol u\cup\{j\}\), update
\[
P(\boldsymbol u\cup\{j\})=P(\boldsymbol u)g_j,
\qquad
\gamma_{\boldsymbol u\cup\{j\}}=\gamma_{\boldsymbol u}\gamma_j.
\]
The traversal accumulates \(P(\boldsymbol u)\) at each visited node.

\begin{proposition}
	\label{prop:jdaq-support-tree-traversal}
	Suppose that the active indices are processed in increasing order and that \((\gamma_j)\) is nonincreasing. For every \(\boldsymbol x,\boldsymbol y\in\T^d\), the depth-first traversal visits each admissible support exactly once and yields
	\[
	\Phi_{q,\rho}^{(d)}(\boldsymbol x-\boldsymbol y)
	=
	1+\sum_{\boldsymbol u\in\mathfrak U_{q,\rho}^{(d)}}P(\boldsymbol u).
	\]
\end{proposition}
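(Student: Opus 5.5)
The plan is to show that the traversal tree with node set \(\mathfrak U_{q,\rho}^{(d)}\cup\{\emptyset\}\) is a genuine rooted tree: every admissible support is reachable from \(\emptyset\) along exactly one path. The accumulated value at each node will then be the product \(\prod_{j\in\boldsymbol u}g_j\), and summing over visited nodes reproduces \eqref{eq:jadqi-kernel}.

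\textbf{Step 1 (downward closure).} For \(\boldsymbol u\in\mathfrak U_{q,\rho}^{(d)}\) and a nonempty \(\boldsymbol v\subseteq\boldsymbol u\), we have \(|\boldsymbol v|\leq q\). Since \(0<\gamma_j\leq1\), also \(\gamma_{\boldsymbol v}\geq\gamma_{\boldsymbol u}\geq\tau_\rho\), so \(\boldsymbol v\) is admissible. Applying this with \(\boldsymbol v=\{j\}\) gives \(\gamma_j\geq\tau_\rho\), so every admissible support lies in \(J_\rho^{(d)}\).

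\textbf{Step 2 (existence and uniqueness of paths).} Take \(\boldsymbol u=\{j_1<\cdots<j_r\}\in\mathfrak U_{q,\rho}^{(d)}\) and set \(\boldsymbol u_i:=\{j_1,\ldots,j_i\}\), with \(\boldsymbol u_0=\emptyset\). By Step 1 each \(\boldsymbol u_i\) is admissible. Each edge \(\boldsymbol u_{i-1}\to\boldsymbol u_i\) satisfies the child rule: \(j_i\in J_\rho^{(d)}\), \(j_i>\max\boldsymbol u_{i-1}\), \(|\boldsymbol u_{i-1}|<r\leq q\), and \(\gamma_{\boldsymbol u_{i-1}}\gamma_{j_i}=\gamma_{\boldsymbol u_i}\geq\tau_\rho\). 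So \(\boldsymbol u\) is reached from the root. Conversely, the child rule only appends indices larger than the current maximum, so every node \(\boldsymbol w\neq\emptyset\) has exactly one possible parent, namely \(\boldsymbol w\setminus\{\max\boldsymbol w\}\). Every node reached satisfies \(1\leq|\boldsymbol w|\leq q\) and \(\gamma_{\boldsymbol w}\geq\tau_\rho\), so it is admissible. Induction on \(|\boldsymbol u|\) then shows the path from the root is unique. Hence the graph is a tree, and depth-first traversal visits each admissible support exactly once. The hypotheses that \((\gamma_j)\) is nonincreasing and that indices are processed in increasing order are not needed for correctness. They only justify stopping the scan of candidate children \(j\) at the first failure of \(\gamma_{\boldsymbol u}\gamma_j\geq\tau_\rho\), because every later index then fails as well.

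\textbf{Step 3 (values).} By induction along the unique path, the update rule gives \(P(\boldsymbol u)=\prod_{j\in\boldsymbol u}g_j=\prod_{j\in\boldsymbol u}\psi^{\circ}_{\rho_j}(x_j-y_j)\), together with the stored weight \(\gamma_{\boldsymbol u}\). The root contributes the constant \(1\). Summing over the visited nodes and comparing with \eqref{eq:jadqi-kernel} evaluated at \(\boldsymbol x-\boldsymbol y\) gives the claimed identity.

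The main obstacle is Step 2's uniqueness and completeness. It reduces to the downward-closure inequality \(\gamma_{\boldsymbol v}\geq\gamma_{\boldsymbol u}\) combined with the strictly increasing append rule.
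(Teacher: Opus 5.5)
Your proposal is correct and follows essentially the same argument as the paper. Prefixes of an admissible support are admissible using only $0<\gamma_j\le 1$, the rule of appending only indices larger than $\max\boldsymbol u$ gives each node a unique parent, and the update rule yields $P(\boldsymbol u)=\prod_{j\in\boldsymbol u}(\psi_{\rho_j}(x_j-y_j)-1)$. Your side remark is also accurate and matches the complexity remark following the proposition: monotonicity of the weights is needed only to stop the candidate scan at the first failed weight test, not for correctness.
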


\begin{proof}
	Let \(\boldsymbol u=\{j_1<\cdots<j_r\}\in\mathfrak U_{q,\rho}^{(d)}\).
	Because \(0<\gamma_j\leq1\), each \(j_\ell\) is active and every nonempty prefix
	\(\{j_1,\ldots,j_\ell\}\) is admissible. The traversal therefore reaches
	\(\boldsymbol u\) by adjoining \(j_1,\ldots,j_r\) in order. No other path can reach it, since indices are appended in increasing order. Conversely, the cardinality and weight tests ensure that every generated nonroot node is admissible. Hence the nonroot nodes are precisely the members of \(\mathfrak U_{q,\rho}^{(d)}\).
	
	The update rule gives
	\[
	P(\boldsymbol u)
	=
	\prod_{j\in\boldsymbol u}
	\bigl(\psi_{\rho_j}(x_j-y_j)-1\bigr).
	\]
	Summing over the visited nodes and using \eqref{eq:jadqi-kernel}, we obtain
	\[
	P(\emptyset)+
	\sum_{\boldsymbol u\in\mathfrak U_{q,\rho}^{(d)}}P(\boldsymbol u)
	=
	1+
	\sum_{\boldsymbol u\in\mathfrak U_{q,\rho}^{(d)}}
	\prod_{j\in\boldsymbol u}
	\bigl(\psi_{\rho_j}(x_j-y_j)-1\bigr)
	=
	\Phi_{q,\rho}^{(d)}(\boldsymbol x-\boldsymbol y).
	\]
	This completes the proof.
\end{proof}

\begin{remark}
	Once the active indices are available in increasing order, computing the factors \(g_j\) requires at most \(|J_\rho^{(d)}|\) univariate kernel evaluations and \(O(|J_\rho^{(d)}|)\) elementary operations. Each successful extension takes \(O(1)\) further operations. For a fixed node, the nonincreasing weights make the admissible candidate indices an initial segment: after the first failed weight test, no larger index can be admitted. There is one successful extension per admissible support and at most one failed test per visited node. The traversal therefore takes
	\[
	O\bigl(1+|J_\rho^{(d)}|+
	|\mathfrak U_{q,\rho}^{(d)}|\bigr)
	\]
	elementary operations. Storing the factors and a depth-first stack of at most \(q\) constant-size frames requires
	\(O\bigl(1+|J_\rho^{(d)}|+q\bigr)\) auxiliary storage.
\end{remark}

\section{Multilevel weight-adapted ANOVA quasi-interpolation}
\label{sec:mlaqi}

Multilevel methods represent an approximation as a sum of tensor-product differences between successive coordinatewise resolutions. A hyperbolic constraint on the level indices allows one coordinate to be refined without imposing comparable refinement on the others. Related constructions appear in hyperbolic ANOVA approximation \cite{LippertPottsUllrich2023}. We use this uneven allocation of resolution to control the \(L_2\)-energy of the weight-adapted ANOVA kernel. 

In the single-level construction of Section~\ref{sec:journal-direct-anova}, all coordinatewise scales are determined by one global parameter. Here, we assign each coordinate a weight-dependent entry level and retain only tensor-product differences satisfying the hyperbolic constraint. The resulting algebraic convergence is independent of the superposition order \(q\).

%\subsection{Dyadic differences}

Let \((\psi_\rho)_{\rho>0}\) be the normalized kernel family, and fix
\(\alpha\) such that
\begin{equation}
	\frac12<\alpha\leq s,
	\quad
	\alpha<m,
	\quad
	G_\alpha<\infty.
	\label{eq:mlaqi-alpha-assumption}
\end{equation}
At the dyadic scales \(\rho_0 2^{-n}\), define the difference kernels
\begin{equation}
	\vartheta_0:=\psi_{\rho_0}-1,
	\quad
	\vartheta_n
	:=
	\psi_{\rho_0 2^{-n}}
	-
	\psi_{\rho_0 2^{-(n-1)}},
	\quad n\geq1.
	\label{eq:mlaqi-difference-zero}
\end{equation}
For \(n\geq0\) and \(\ell\in\mathbb Z\), write
\(d_n(\ell):=\widehat{\vartheta_n}(\ell)\).
Normalization gives \(d_n(0)=0\) for every \(n\geq0\).
For each \(\ell\neq0\), the multiplier differences telescope, and the
small-scale limit yields
\begin{equation}
	\sum_{n=0}^{N}d_n(\ell)
	=
	\widehat{\psi}_{\rho_0 2^{-N}}(\ell)
	\longrightarrow 1
	\quad\text{as }N\to\infty.
	\label{eq:mlaqi-telescoping-multiplier}
\end{equation}
Thus, \((d_n)_{n\geq0}\) gives a dyadic resolution of the identity on
the nonzero Fourier modes.

We incorporate the coordinate weights into the level selection through
the entry levels
\begin{equation}
	\lambda_j
	:=
	\left\lfloor
	\frac{1}{2\alpha}\log_2\gamma_j^{-1}
	\right\rfloor,
	\quad j\geq1.
	\label{eq:mlaqi-entry-level}
\end{equation}
For \(\boldsymbol u\subseteq[d]\), set
\(\lambda_{\boldsymbol u}:=\sum_{j\in\boldsymbol u}\lambda_j\), with
\(\lambda_{\emptyset}:=0\). The definition of \(\lambda_j\) gives
\begin{equation}
	2^{-\lambda_j}
	\leq
	2\gamma_j^{1/(2\alpha)},
	\quad
	\gamma_j^{1/2}
	\leq
	2^{-\alpha\lambda_j}.
	\label{eq:mlaqi-entry-level-comparison}
\end{equation}
Consequently, coordinates with smaller weights have larger entry levels and enter the construction only at higher global levels.

\subsection{The multilevel ANOVA kernel and convolution}

For \(L\in\mathbb N_0\), define the weight-adapted multilevel ANOVA
kernel by
\begin{equation}
	\Phi_{q,L}^{\mathrm{ml},(d)}(\boldsymbol x)
	:=
	1+
	\sum_{r=1}^{\min\{q,d\}}
	\sum_{\substack{
			\boldsymbol u\subseteq[d]\\
			|\boldsymbol u|=r,\ \lambda_{\boldsymbol u}\leq L}}
	\;
	\sum_{\substack{
			\boldsymbol n\in\mathbb N_0^{\boldsymbol u}\\
			|\boldsymbol n|_1\leq L-\lambda_{\boldsymbol u}}}
	\prod_{j\in\boldsymbol u}
	\vartheta_{n_j}(x_j),
	\label{eq:mlaqi-kernel}
\end{equation}
where \(\boldsymbol n=(n_j)_{j\in\boldsymbol u}\) and
$|\boldsymbol n|_1:=\sum_{j\in\boldsymbol u}n_j$.

The pair \((\boldsymbol u,\boldsymbol n)\) occurs in the sum exactly when $\lambda_{\boldsymbol u}+|\boldsymbol n|_1\leq L$. Here \(\lambda_{\boldsymbol u}\) records the contribution of the coordinate weights, while \(|\boldsymbol n|_1\) is the total local dyadic level. For a fixed interaction \(\boldsymbol u\), the available
level \(L-\lambda_{\boldsymbol u}\) is distributed among its coordinates through the hyperbolic constraint
\(|\boldsymbol n|_1\leq L-\lambda_{\boldsymbol u}\).

The approximation and \(L_2\)-energy estimates use the following univariate bounds. 

\begin{lemma}
	\label{lem:mlaqi-bandpass}
	Let \((\psi_\rho)_{\rho>0}\) satisfy
	Assumption~\ref{ass:jfg-kernel} and Assumption~\ref{ass:jfg-kernel1} with \(\kappa>1/2\). 
	For any \(0<\beta<m\), we have
	\begin{equation}
		\sum_{n=0}^{\infty}
		2^{\beta n}|d_n(\ell)|
		\leq
		C_\beta\langle \ell\rangle^\beta,
		\quad \ell\neq0.
		\label{eq:mlaqi-weighted-bandpass-sum}
	\end{equation}
	If
	\(0<\varepsilon<\min\{m,2\kappa-1\}\), then
	\begin{equation}
		\big|
		\langle\vartheta_n,\vartheta_v\rangle_{L_2(\mathbb T)}
		\big|
		\leq
		C_\varepsilon
		2^{\min\{n,v\}}
		2^{-\varepsilon|n-v|},
		\quad n,v\in\mathbb N_0.
		\label{eq:mlaqi-almost-orthogonality}
	\end{equation}
\end{lemma}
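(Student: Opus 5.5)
Both estimates are handled on the Fourier side. The only inputs are two bounds on $d_n(\ell)$, obtained from Assumptions~\ref{ass:jfg-kernel} and~\ref{ass:jfg-kernel1} with $\rho_n:=\rho_0 2^{-n}$.

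\emph{Low-frequency bound.} For $n\geq1$ write $d_n(\ell)=(1-\widehat\psi_{\rho_{n-1}}(\ell))-(1-\widehat\psi_{\rho_n}(\ell))$. Assumption~\ref{ass:jfg-kernel} then gives $|d_n(\ell)|\leq 2c_1\min\{1,(\rho_{n-1}|\ell|)^m\}\leq C\min\{1,(2^{-n}|\ell|)^m\}$. For $n=0$, $d_0(\ell)=\widehat\psi_{\rho_0}(\ell)$ when $\ell\ne0$, which is bounded by a constant; this also fits the same form.

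\emph{High-frequency bound.} Assumption~\ref{ass:jfg-kernel1} gives $|d_n(\ell)|\leq C(1+2^{-n}|\ell|)^{-2\kappa}$ for all $n$.

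\emph{Proof of \eqref{eq:mlaqi-weighted-bandpass-sum}.} Fix $\ell\neq0$ and let $N$ be the integer with $2^{N}\le|\ell|<2^{N+1}$. For $n\le N$ use the trivial bound $|d_n(\ell)|\le C$; the geometric sum gives $\sum_{n\le N}2^{\beta n}\le C2^{\beta N}\le C\langle\ell\rangle^\beta$. For $n>N$ use the moment bound: $2^{\beta n}(2^{-n}|\ell|)^m=|\ell|^m2^{-(m-\beta)n}$. Since $\beta<m$ this sums to $C|\ell|^m2^{-(m-\beta)N}\le C|\ell|^\beta$. This part is routine.

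\emph{Proof of \eqref{eq:mlaqi-almost-orthogonality}.} By Parseval and the symmetry $d_n(\ell)=d_n(-\ell)$,
\[
\langle\vartheta_n,\vartheta_v\rangle=\sum_{\ell\ne0}d_n(\ell)\overline{d_v(\ell)} .
\]
Assume $n\le v$ and put $k=v-n$. Using both bounds, each summand is at most
\[
C\,(1+2^{-n}|\ell|)^{-2\kappa}\min\{1,(2^{-v}|\ell|)^m\}.
\]
Split the frequencies into three ranges.
\begin{itemize}
\item[(i)] $|\ell|\le2^{n}$. The first factor is $\le1$ and the second is $\le(2^{-v}|\ell|)^m$. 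The sum is $\lesssim 2^{-vm}2^{n(m+1)}=2^{n}2^{-mk}$.
\item[(ii)] $2^n<|\ell|\le2^v$. Bound the summand by $(2^{-n}|\ell|)^{-2\kappa}(2^{-v}|\ell|)^m$. The power of $|\ell|$ is $m-2\kappa$.
\begin{itemize}
\item If $m-2\kappa>-1$, the sum is dominated by its upper end and equals $\lesssim 2^{2\kappa n}2^{v(1-2\kappa)}=2^n2^{-(2\kappa-1)k}$.
\item If $m-2\kappa<-1$, it is dominated by its lower end and equals $\lesssim 2^{n}2^{-mk}$.
\item The borderline case $m-2\kappa=-1$ produces an extra factor $k$. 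This factor is absorbed because $\varepsilon$ is strictly below $\min\{m,2\kappa-1\}$.
\end{itemize}
\item[(iii)] $|\ell|>2^v$. Bound the summand by $(2^{-n}|\ell|)^{-2\kappa}$. Since $2\kappa>1$, the sum is $\lesssim2^{2\kappa n}2^{v(1-2\kappa)}=2^n2^{-(2\kappa-1)k}$.
\end{itemize}
Combining the three ranges gives $|\langle\vartheta_n,\vartheta_v\rangle|\lesssim2^{\min\{n,v\}}2^{-\min\{m,2\kappa-1\}|n-v|}$ up to the polynomial factor $k$. That factor is absorbed by $2^{-\varepsilon k}$.

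The main care is in range (ii), where the exponent comparison and the borderline logarithm must be handled carefully. This is exactly why the statement uses a strict $\varepsilon$.
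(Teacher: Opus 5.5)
Your proof is correct and takes essentially the same route as the paper. It uses the same pair of bounds (decay for the coarser difference $d_n$, consistency for the finer one $d_v$), the same three-range split of the frequency sum at $|\ell|\approx 2^{n}$ and $|\ell|\approx 2^{v}$, and the same comparison of $m$ with $2\kappa-1$, with the borderline logarithmic factor absorbed by the strict inequality on $\varepsilon$. The only difference is minor: for $n\le N$ in \eqref{eq:mlaqi-weighted-bandpass-sum} you use the uniform bound $|d_n(\ell)|\le C$ instead of the decay estimate, which already suffices because $\beta>0$.
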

\begin{proof}
	See Appendix \ref{Appendix2}.
\end{proof}

Since each \(\vartheta_n\) has zero mean, it follows that $\widehat{\Phi}_{q,L}^{\mathrm{ml},(d)}(\boldsymbol0)=1$. The associated convolution operator is given by
\begin{equation}
	\mathcal C_{q,L}^{\mathrm{ml},(d)}f
	:=
	f*\Phi_{q,L}^{\mathrm{ml},(d)}.
	\label{eq:mlaqi-convolution}
\end{equation}

To state the multiplier formula, let
\(\emptyset\neq\boldsymbol u\subseteq[d]\), \(B\in\mathbb N_0\), and
$\bell_{\boldsymbol u}=(\ell_j)_{j\in\boldsymbol u}$ for $\ell_j\neq0$. Define the truncated component multiplier by
\begin{equation}
	a_{\boldsymbol u,B}(\bell_{\boldsymbol u})
	:=
	\sum_{\substack{
			\boldsymbol n\in\mathbb N_0^{\boldsymbol u}\\
			|\boldsymbol n|_1\leq B}}
	\prod_{j\in\boldsymbol u}d_{n_j}(\ell_j).
	\label{eq:mlaqi-component-multiplier}
\end{equation}

\begin{proposition}
	\label{prop:mlaqi-multiplier-energy}
	Suppose that \eqref{eq:mlaqi-alpha-assumption} holds, and let
	\(q\in\mathbb N_0\) be fixed. Then we have
	\begin{equation}
		\widehat{\Phi}_{q,L}^{\mathrm{ml},(d)}(\bell)
		=
		\begin{cases}
			1,
			& \bell=\boldsymbol0,\\[1mm]
			a_{\boldsymbol u,L-\lambda_{\boldsymbol u}}
			(\bell_{\boldsymbol u}),
			&
			\begin{array}{l}
				\boldsymbol u=\supp\bell,\\[-1mm]
				1\leq|\boldsymbol u|\leq q,\quad
				\lambda_{\boldsymbol u}\leq L,
			\end{array}\\[4mm]
			0,
			& \text{otherwise}.
		\end{cases}
		\label{eq:mlaqi-multiplier}
	\end{equation}
	Moreover, there exists a constant \(C>0\) independent of \(d\) and \(L\) such that
	\begin{equation}
		\|\Phi_{q,L}^{\mathrm{ml},(d)}\|_2^2
		\leq
		C2^L(L+1)^{q-1}.
		\label{eq:mlaqi-energy}
	\end{equation}
\end{proposition}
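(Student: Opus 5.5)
The multiplier formula follows directly from Fourier support considerations, and the energy bound follows from almost-orthogonality plus a counting argument.

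\textbf{Multiplier formula.} Fix a summand $(\boldsymbol u,\boldsymbol n)$ in \eqref{eq:mlaqi-kernel}.
\begin{itemize}
\item Its Fourier coefficient at $\bell$ is $\prod_{j\in\boldsymbol u}d_{n_j}(\ell_j)$ if $\ell_i=0$ for $i\notin\boldsymbol u$. Otherwise it vanishes, because the factor for such $i$ is the constant function $1$, whose only nonzero coefficient is at frequency $0$.
\item Since $d_n(0)=0$ for every $n$, the coefficient also vanishes unless $\ell_j\neq0$ for all $j\in\boldsymbol u$.
\end{itemize}
So each summand is supported on frequencies with $\supp\bell=\boldsymbol u$. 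Collecting all $\boldsymbol n$ with $|\boldsymbol n|_1\leq L-\lambda_{\boldsymbol u}$ gives $a_{\boldsymbol u,L-\lambda_{\boldsymbol u}}(\bell_{\boldsymbol u})$, exactly as in \eqref{eq:mlaqi-component-multiplier}. The constant term supplies the value $1$ at $\bell=\boldsymbol0$. Every other frequency, namely one with $|\supp\bell|>q$ or $\lambda_{\supp\bell}>L$, receives no contribution. This proves \eqref{eq:mlaqi-multiplier}.

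\textbf{Orthogonal splitting.} Terms with distinct $\boldsymbol u$ have disjoint Fourier supports. Hence
\[
\|\Phi_{q,L}^{\mathrm{ml},(d)}\|_2^2=1+\sum_{\boldsymbol u}\Big\|\sum_{\boldsymbol n}\prod_{j\in\boldsymbol u}\vartheta_{n_j}\Big\|_2^2,
\]
and by Fubini each square expands as
\[
\sum_{\boldsymbol n,\boldsymbol v}\prod_{j\in\boldsymbol u}\langle\vartheta_{n_j},\vartheta_{v_j}\rangle .
\]
Apply \eqref{eq:mlaqi-almost-orthogonality} with a fixed small $\varepsilon>0$. Here I assume $\kappa>1/2$, as in Lemma~\ref{lem:mlaqi-bandpass}, which is implicitly needed. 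With $B:=L-\lambda_{\boldsymbol u}$ this gives
\[
\Big\|\sum_{|\boldsymbol n|_1\le B}\prod\vartheta_{n_j}\Big\|_2^2\le C_\varepsilon^{|\boldsymbol u|}\sum_{|\boldsymbol n|_1,|\boldsymbol v|_1\le B}\ \prod_{j\in\boldsymbol u}2^{\min\{n_j,v_j\}}2^{-\varepsilon|n_j-v_j|}.
\]

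\textbf{Counting step.} For fixed $\boldsymbol n$, the sum over all $\boldsymbol v\in\mathbb N_0^{\boldsymbol u}$ factorizes. Each coordinate contributes at most $\sum_{v}2^{\min\{n,v\}-\varepsilon|n-v|}\le C2^{n}$. Hence the double sum is at most
\[
C^{r}\sum_{|\boldsymbol n|_1\le B}2^{|\boldsymbol n|_1}\le C^r\sum_{k=0}^{B}\binom{k+r-1}{r-1}2^k\le C_r\,2^{B}(B+1)^{r-1},\qquad r=|\boldsymbol u|\le q.
\]
Now insert $B=L-\lambda_{\boldsymbol u}$, use $r\le q$, and bound $(B+1)^{r-1}\le(L+1)^{q-1}$. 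This yields
\[
\|\Phi\|_2^2\le 1+C2^L(L+1)^{q-1}\sum_{r=1}^q\sum_{|\boldsymbol u|=r}2^{-\lambda_{\boldsymbol u}}.
\]

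\textbf{Dimension-uniform bound on the weight sum.} This is the main point. By \eqref{eq:mlaqi-entry-level-comparison},
\[
2^{-\lambda_{\boldsymbol u}}\le 2^{r}\prod_{j\in\boldsymbol u}\gamma_j^{1/(2\alpha)}.
\]
The elementary symmetric polynomial bound used in Proposition~\ref{prop:jdaq-kernel-structure} then gives
\[
\sum_{|\boldsymbol u|=r}2^{-\lambda_{\boldsymbol u}}\le (2G_\alpha)^r/r!,
\]
which is independent of $d$. Summing over $r\le q$ produces a constant depending only on $q$, $G_\alpha$ and the kernel, which proves \eqref{eq:mlaqi-energy}.

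The step I expect to need the most care is the cross-term bound. The factor $2^{\min\{n,v\}}$, rather than $2^{\max}$, is what keeps the energy at $2^{|\boldsymbol n|_1}$ per diagonal term with no extra loss. It also requires that the $\boldsymbol v$-sum can be extended to all of $\mathbb N_0^{\boldsymbol u}$ without breaking the geometric decay.
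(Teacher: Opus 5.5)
Your proposal is correct and follows essentially the same route as the paper's proof in Appendix~\ref{Appendix3}. Both arguments use disjoint Fourier supports for distinct $\boldsymbol u$, then sum the almost-orthogonality bound \eqref{eq:mlaqi-almost-orthogonality} over $\boldsymbol v$ to get a component energy of order $2^{B}(B+1)^{r-1}$ with $B=L-\lambda_{\boldsymbol u}$, and finish with the elementary symmetric sum estimate using $2^{-\lambda_j}\leq 2\gamma_j^{1/(2\alpha)}$. The only difference is cosmetic: you bound $2^{\min\{n_j,v_j\}}\leq 2^{n_j}$ directly, whereas the paper first rewrites the exponent via $\min\{n,v\}=\tfrac12(n+v)-\tfrac12|n-v|$; your remark that $\kappa>1/2$ is implicitly required matches the paper's choice $0<\varepsilon<\min\{m,2\kappa-1\}$.
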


\begin{proof}
	See Appendix \ref{Appendix3}.
\end{proof}

We next establish an error estimate for the multilevel convolution operator \eqref{eq:mlaqi-convolution}.

\begin{theorem}
\label{thm:mlaqi-bias}
Suppose that \eqref{eq:mlaqi-alpha-assumption} holds, and let
\(q\in\mathbb N_0\) be fixed. Let 
\(f\in H^s_{\mathrm{Kor},\boldsymbol\gamma,q}(\mathbb T^d)\). There exists a constant \(C>0\),
independent of \(d\), \(L\), and \(f\), such that
\begin{equation}
\|f-\mathcal C_{q,L}^{\mathrm{ml},(d)}f\|_2
\leq
C2^{-\alpha L}
\|f\|_{s,\boldsymbol\gamma}.
\label{eq:mlaqi-bias}
\end{equation}
\end{theorem}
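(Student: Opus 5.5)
The plan is to work multiplier by multiplier, as in Theorem~\ref{thm:jdaq-convolution-bias}. By Parseval's identity and the support restriction on \(f\),
\[
\|f-\mathcal C_{q,L}^{\mathrm{ml},(d)}f\|_2^2=\sum_{\bell\neq\boldsymbol0,\ |\supp\bell|\leq q}\bigl|1-\widehat\Phi_{q,L}^{\mathrm{ml},(d)}(\bell)\bigr|^2|\widehat f(\bell)|^2 .
\]
It therefore suffices to prove, for every such \(\bell\) with \(\boldsymbol u:=\supp\bell\), the pointwise bound
\[
\bigl|1-\widehat\Phi_{q,L}^{\mathrm{ml},(d)}(\bell)\bigr|\leq C2^{-\alpha L}\gamma_{\boldsymbol u}^{-1/2}\prod_{j\in\boldsymbol u}\langle\ell_j\rangle^{s},
\]
with \(C\) depending only on \(q\), \(\alpha\), and the kernel. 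Summing then gives \eqref{eq:mlaqi-bias}. Proposition~\ref{prop:mlaqi-multiplier-energy} supplies the multiplier formula \eqref{eq:mlaqi-multiplier}, and we split into two cases.

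\emph{Case \(\lambda_{\boldsymbol u}>L\).} Here the multiplier vanishes, so the error factor is \(1\). By \eqref{eq:mlaqi-entry-level-comparison}, \(\gamma_{\boldsymbol u}^{1/2}\leq 2^{-\alpha\lambda_{\boldsymbol u}}<2^{-\alpha L}\). Since \(\prod_j\langle\ell_j\rangle^s\geq1\), the bound follows with \(C=1\).

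\emph{Case \(\lambda_{\boldsymbol u}\leq L\).} Set \(B:=L-\lambda_{\boldsymbol u}\). By \eqref{eq:mlaqi-telescoping-multiplier}, \(\prod_{j\in\boldsymbol u}\sum_{n=0}^\infty d_n(\ell_j)=1\); the series converge absolutely by \eqref{eq:mlaqi-weighted-bandpass-sum}, so the product can be expanded. Then
\[
1-a_{\boldsymbol u,B}(\bell_{\boldsymbol u})=\sum_{\boldsymbol n\in\mathbb N_0^{\boldsymbol u},\ |\boldsymbol n|_1>B}\prod_{j\in\boldsymbol u}d_{n_j}(\ell_j).
\]
This tail is controlled by the weighted bandpass sum of Lemma~\ref{lem:mlaqi-bandpass} with \(\beta=\alpha<m\). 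On the index set we have \(2^{\alpha|\boldsymbol n|_1}>2^{\alpha B}\), so
\[
|1-a_{\boldsymbol u,B}|\leq2^{-\alpha B}\sum_{\boldsymbol n}\prod_{j}2^{\alpha n_j}|d_{n_j}(\ell_j)|\leq 2^{-\alpha B}C_\alpha^{|\boldsymbol u|}\prod_{j}\langle\ell_j\rangle^{\alpha}.
\]
We then use \(\alpha\leq s\), \(|\boldsymbol u|\leq q\), and \(2^{-\alpha B}=2^{-\alpha L}2^{\alpha\lambda_{\boldsymbol u}}\). The entry-level inequality \(\gamma_j^{1/2}\leq2^{-\alpha\lambda_j}\) gives \(2^{\alpha\lambda_{\boldsymbol u}}\leq\gamma_{\boldsymbol u}^{-1/2}\), which yields the required bound with \(C=\max\{1,C_\alpha\}^q\).

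The main subtlety is justifying the tail representation. The telescoping sum must converge to \(1\) for each fixed \(\ell\neq0\); this follows from Assumption~\ref{ass:jfg-kernel}, since \(|1-\widehat\psi_{\rho_02^{-N}}(\ell)|\leq c_1(\rho_02^{-N}|\ell|)^m\to0\). Lemma~\ref{lem:mlaqi-bandpass} then gives absolute convergence, which permits expanding the product of series and bounding the tail. The argument also needs the exponent \(\beta=\alpha\) to be admissible in the lemma; this is exactly why \eqref{eq:mlaqi-alpha-assumption} requires \(\alpha<m\). Lemma~\ref{lem:mlaqi-bandpass} as stated assumes \(\kappa>1/2\), but its first bound \eqref{eq:mlaqi-weighted-bandpass-sum} should rely only on Assumption~\ref{ass:jfg-kernel}. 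All constants depend on \(q\) but not on \(d\) or \(L\).
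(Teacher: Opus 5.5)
Your proposal is correct and follows the paper's proof essentially step for step: the same split on whether $\lambda_{\boldsymbol u}\leq L$, the same tail expansion of $1-a_{\boldsymbol u,L-\lambda_{\boldsymbol u}}$ controlled by the weighted bandpass sum with $\beta=\alpha$, the same use of $\gamma_{\boldsymbol u}^{1/2}2^{\alpha\lambda_{\boldsymbol u}}\leq1$ and $\alpha\leq s$, and Parseval at the end. Your side remark is also accurate: \eqref{eq:mlaqi-weighted-bandpass-sum} follows from Assumption~\ref{ass:jfg-kernel} alone, since for $n\leq N(\ell)$ a uniform bound on $|d_n(\ell)|$ already gives a convergent geometric sum when $\beta>0$.
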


\begin{proof}
If \(q=0\), the estimate follows easily. Assume henceforth that \(q\geq1\). Fix
\(\boldsymbol\ell\neq\boldsymbol0\) and set
\(\boldsymbol u:=\supp\boldsymbol\ell\). Since \(f\) has superposition
order at most \(q\), it suffices to consider
\(1\leq|\boldsymbol u|\leq q\).

First suppose that \(\lambda_{\boldsymbol u}\leq L\).
For each \(j\in\boldsymbol u\), the telescoping identity
\eqref{eq:mlaqi-telescoping-multiplier} gives
\(\sum_{n=0}^{\infty}d_n(\ell_j)=1\).
The bound \eqref{eq:mlaqi-weighted-bandpass-sum}, with
\(\beta=\alpha<m\), implies absolute convergence. We may therefore
expand and rearrange the finite product of these series. By the
definition of \(a_{\boldsymbol u,L-\lambda_{\boldsymbol u}}\),
\[
1-
a_{\boldsymbol u,L-\lambda_{\boldsymbol u}}
(\boldsymbol\ell_{\boldsymbol u})
=
\sum_{\substack{
		\boldsymbol n\in\mathbb N_0^{\boldsymbol u}\\
		|\boldsymbol n|_1>L-\lambda_{\boldsymbol u}}}
\prod_{j\in\boldsymbol u}d_{n_j}(\ell_j).
\]
Every multi-index in this sum satisfies
\[
2^{\alpha|\boldsymbol n|_1}
>
2^{\alpha(L-\lambda_{\boldsymbol u})}.
\] It then follows that
\begin{align}
	\big|
	1-
	a_{\boldsymbol u,L-\lambda_{\boldsymbol u}}
	(\boldsymbol \ell_{\boldsymbol u})
	\big|
	&\leq
	2^{-\alpha(L-\lambda_{\boldsymbol u})}
	\sum_{\boldsymbol n\in\mathbb N_0^{\boldsymbol u}}
	2^{\alpha|\boldsymbol n|_1}
	\prod_{j\in\boldsymbol u}|d_{n_j}(\ell_j)|
	\notag\\
	&=
	2^{-\alpha(L-\lambda_{\boldsymbol u})}
	\prod_{j\in\boldsymbol u}
	\sum_{n=0}^{\infty}
	2^{\alpha n}|d_n(\ell_j)|
	\notag\\
	&\leq
	C^q
	2^{-\alpha(L-\lambda_{\boldsymbol u})}
	\prod_{j\in\boldsymbol u}
	\langle \ell_j\rangle^\alpha.
	\label{eq:mlaqi-retained-bias-multiplier}
\end{align}

Let $x_j:=\frac{1}{2\alpha}\log_2\gamma_j^{-1}$,
so that \(\lambda_j=\lfloor x_j\rfloor\). Then
$\gamma_{\boldsymbol u}^{1/2}=2^{-\alpha\sum_{j\in\boldsymbol u}x_j}$ and $\lambda_{\boldsymbol u}\leq\sum_{j\in\boldsymbol u}x_j$. Hence $\gamma_{\boldsymbol u}^{1/2}
2^{\alpha\lambda_{\boldsymbol u}}\leq1$.
Using this inequality and \(\alpha\leq s\) in
\eqref{eq:mlaqi-retained-bias-multiplier} yields
\begin{equation}
	\gamma_{\boldsymbol u}^{1/2}
	\big|
	1-
	a_{\boldsymbol u,L-\lambda_{\boldsymbol u}}
	(\boldsymbol \ell_{\boldsymbol u})
	\big|
	\leq
	C^q2^{-\alpha L}
	\prod_{j\in\boldsymbol u}
	\langle \ell_j\rangle^s.
	\label{eq:mlaqi-weighted-retained-bias}
\end{equation}

Next, consider the case
\(\lambda_{\boldsymbol u}>L\). Since
\(\lambda_{\boldsymbol u}\) is an integer,
\(\lambda_{\boldsymbol u}\geq L+1\). The relation
\eqref{eq:mlaqi-entry-level-comparison} gives
$$\gamma_{\boldsymbol u}^{1/2}\leq2^{-\alpha\lambda_{\boldsymbol u}}\leq2^{-\alpha(L+1)}.$$ In this case,
\(\widehat{\Phi}_{q,L}^{\mathrm{ml},(d)}(\boldsymbol \ell)=0\).
Since
\(\prod_{j\in\boldsymbol u}\langle \ell_j\rangle^s\geq1\), we get
\[
\gamma_{\boldsymbol u}^{1/2}
\big|
1-
\widehat{\Phi}_{q,L}^{\mathrm{ml},(d)}(\boldsymbol \ell)
\big|
\leq
2^{-\alpha L}
\prod_{j\in\boldsymbol u}
\langle \ell_j\rangle^s.
\]

Both cases lead to the multiplier estimate
\[
\big|
1-
\widehat{\Phi}_{q,L}^{\mathrm{ml},(d)}(\boldsymbol \ell)
\big|
\leq
C2^{-\alpha L}
\gamma_{\supp\boldsymbol \ell}^{-1/2}
\prod_{j\in\supp\boldsymbol \ell}
\langle \ell_j\rangle^s
\]
whenever
\(\boldsymbol \ell\neq\boldsymbol0\) and
\(|\supp\boldsymbol \ell|\leq q\).

 Finally, we apply Parseval's identity to obtain
\begin{align*}
	\|f-\mathcal C_{q,L}^{\mathrm{ml},(d)}f\|_2^2
	=
	\sum_{\boldsymbol \ell\in\mathbb Z^d}
	\big|
	1-
	\widehat{\Phi}_{q,L}^{\mathrm{ml},(d)}(\boldsymbol \ell)
	\big|^2
	|\widehat f(\boldsymbol \ell)|^2
	\leq
	C2^{-2\alpha L}
	\|f\|_{s,\boldsymbol\gamma}^2,
\end{align*}
which completes the proof.
\end{proof}

\subsection{Random sampling}

 Let
\(\boldsymbol X_1,\ldots,\boldsymbol X_M\) be independent random variables uniformly distributed on \(\mathbb T^d\). The direct multilevel quasi-interpolant is defined by
\begin{equation}
	\mathcal Q_{M,q,L}^{\mathrm{ml},(d)}f
	:=
	\frac{1}{M}
	\sum_{j=1}^{M}
	f(\boldsymbol X_j)
	\Phi_{q,L}^{\mathrm{ml},(d)}
	(\,\cdot-\boldsymbol X_j).
	\label{eq:mlaqi-noiseless-rule}
\end{equation}
Uniform sampling and Fubini's theorem give
$\mathbb E\mathcal Q_{M,q,L}^{\mathrm{ml},(d)}f=\mathcal C_{q,L}^{\mathrm{ml},(d)}f$. 

\begin{theorem}
\label{thm:mlaqi-risk}
Let \(d,M\in\mathbb N\) and \(L\in\mathbb N_0\), and suppose that \eqref{eq:mlaqi-alpha-assumption} holds. Then, for any
\(f\in H^s_{\mathrm{Kor},\boldsymbol\gamma,q}(\mathbb T^d)\), we have
\begin{align}
	\mathbb E
	\bigl\|
	f-\mathcal Q_{M,q,L}^{\mathrm{ml},(d)}f
	\bigr\|_2^2
	&=
	\bigl\|
	f-\mathcal C_{q,L}^{\mathrm{ml},(d)}f
	\bigr\|_2^2
	+
	\frac{1}{M}
	\left(
	\|f\|_2^2
	\|\Phi_{q,L}^{\mathrm{ml},(d)}\|_2^2
	-
	\|\mathcal C_{q,L}^{\mathrm{ml},(d)}f\|_2^2
	\right)
	\notag\\
	&\leq
	C
	\left(
	2^{-2\alpha L}
	+
	\frac{2^L(L+1)^{q-1}}{M}
	\right)
	\|f\|_{s,\boldsymbol\gamma}^2.
	\label{eq:mlaqi-exact-risk}
\end{align}
%For fixed \(q\), the constant \(C>0\) is independent of \(d\), \(M\), \(L\), and \(f\).
\end{theorem}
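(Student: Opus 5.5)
The proof follows the template of Theorem~\ref{thm:jadqi-dimension-uniform}, with the single-level ingredients replaced by their multilevel counterparts. First, I will verify the hypotheses of Theorem~\ref{thm:jlrn-iid-risk} for $\calK=\Phi_{q,L}^{\mathrm{ml},(d)}$.

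By \eqref{eq:mlaqi-alpha-assumption} we have $G_\alpha<\infty$ with $\alpha>1/2$, so Lemma~\ref{lem:jdaq-uniform-embedding} gives a continuous representative of $f$. The kernel is a finite sum of tensor products of the functions $\vartheta_n\in L_2(\T)$, so it lies in $L_2(\T^d)$; the energy bound \eqref{eq:mlaqi-energy} confirms this quantitatively. Each $\vartheta_n$ has zero mean, hence $\widehat{\Phi}_{q,L}^{\mathrm{ml},(d)}(\boldsymbol 0)=1$, as recorded in \eqref{eq:mlaqi-multiplier}. Theorem~\ref{thm:jlrn-iid-risk} then yields the exact identity in \eqref{eq:mlaqi-exact-risk} directly.

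For the upper bound, I will drop the nonpositive term $-M^{-1}\|\mathcal C_{q,L}^{\mathrm{ml},(d)}f\|_2^2$ and bound the remaining two terms separately.
\begin{itemize}
\item \textbf{Bias.} Theorem~\ref{thm:mlaqi-bias} gives
\[
\|f-\mathcal C_{q,L}^{\mathrm{ml},(d)}f\|_2^2\leq C2^{-2\alpha L}\|f\|_{s,\boldsymbol\gamma}^2 .
\]
\item \textbf{Variance.} Since $\gamma_{\supp\bk}\leq1$ and $\langle k_j\rangle\geq1$, the weighted Korobov norm dominates the $L_2$ norm, so $\|f\|_2\leq\|f\|_{s,\boldsymbol\gamma}$. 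Combining this with Proposition~\ref{prop:mlaqi-multiplier-energy} gives
\[
M^{-1}\|f\|_2^2\|\Phi_{q,L}^{\mathrm{ml},(d)}\|_2^2\leq CM^{-1}2^L(L+1)^{q-1}\|f\|_{s,\boldsymbol\gamma}^2 .
\]
\end{itemize}
Adding the two bounds gives the claimed inequality, with a constant independent of $d$, $M$, $L$ and $f$ because both cited constants are.

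The main point needing care is that the hypotheses of the cited results are actually met. Proposition~\ref{prop:mlaqi-multiplier-energy} and Theorem~\ref{thm:mlaqi-bias} rely on Lemma~\ref{lem:mlaqi-bandpass}, which requires $\kappa>1/2$ in Assumption~\ref{ass:jfg-kernel1}. I will state explicitly that this standing assumption of the section is in force. The degenerate case $q=0$ also needs a separate check. There the kernel is identically $1$ and $f$ is constant, so the error is explicit. The energy bound must still be read correctly in this case: the $(L+1)^{-1}$ factor is harmless, since $\|\Phi\|_2^2=1\leq 2^L(L+1)^{-1}\cdot C$ for a suitable $C$. This case can be handled directly.
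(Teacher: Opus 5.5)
Your proposal is correct and takes essentially the same approach as the paper, which gives no separate proof here. The intended argument is the one used for Theorem~\ref{thm:jadqi-dimension-uniform}: apply Theorem~\ref{thm:jlrn-iid-risk} for the identity, then bound the two terms using Theorem~\ref{thm:mlaqi-bias}, $\|f\|_2\leq\|f\|_{s,\boldsymbol\gamma}$, and Proposition~\ref{prop:mlaqi-multiplier-energy}. You are also right to flag the implicit requirement $\kappa>1/2$ and to check the case $q=0$, which holds because $2^L\geq L+1$.
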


The level \(L_M\) below balances the two terms in the above mean-square estimate.

\begin{corollary}
\label{cor:mlaqi-rate}
Suppose that \eqref{eq:mlaqi-alpha-assumption} holds. Let
\(d\in\mathbb N\) and \(M\geq2\), and define
\begin{equation}
	L_M
	:=
	\max\left\{
	L\in\mathbb N_0:
	2^{(2\alpha+1)L}(L+1)^{q-1}
	\leq M
	\right\}.
	\label{eq:mlaqi-level-choice}
\end{equation}
Then there exists a constant \(C>0\) independent of \(d\) and \(M\) such that 
\begin{equation}
	\sup_{\substack{
		f\in
		H^s_{\mathrm{Kor},\boldsymbol\gamma,q}(\mathbb T^d)\\
		\|f\|_{s,\boldsymbol\gamma}\leq1}}
	\left(
	\mathbb E
	\bigl\|
		f-\mathcal Q_{M,q,L_M}^{\mathrm{ml},(d)}f
		\bigr\|_2^2
	\right)^{1/2} \leq C M^{-\frac{\alpha}{2\alpha+1}}
	\bigl[\log(e+M)\bigr]^{\frac{\alpha(q-1)}{2\alpha+1}}.
	\label{eq:mlaqi-rate}
\end{equation}
\end{corollary}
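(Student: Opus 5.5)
The plan is to insert the level choice \(L_M\) into the risk bound of Theorem~\ref{thm:mlaqi-risk}. Taking the supremum over \(\|f\|_{s,\boldsymbol\gamma}\leq1\) gives
\[
\mathbb E\bigl\|f-\mathcal Q_{M,q,L_M}^{\mathrm{ml},(d)}f\bigr\|_2^2
\leq C\Bigl(2^{-2\alpha L_M}+\frac{2^{L_M}(L_M+1)^{q-1}}{M}\Bigr),
\]
with \(C\) independent of \(d\) and \(M\). It then suffices to bound each of the two terms by \(C M^{-2\alpha/(2\alpha+1)}[\log(e+M)]^{2\alpha(q-1)/(2\alpha+1)}\). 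First I check that \(L_M\) is well defined: \(L=0\) is admissible because \(1\leq M\), and the set is finite since \(2^{(2\alpha+1)L}\to\infty\). Hence \(L_M\in\mathbb N_0\) exists.

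\textbf{Variance term.} By the defining inequality, \(2^{L_M}(L_M+1)^{q-1}\leq M\,2^{-2\alpha L_M}\). The variance term is therefore at most \(2^{-2\alpha L_M}\), and both terms reduce to bounding \(2^{-2\alpha L_M}\).

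\textbf{Bias term.} I use maximality: \(L_M+1\) fails, so
\[
2^{(2\alpha+1)(L_M+1)}(L_M+2)^{q-1}>M,
\]
that is,
\[
2^{-2\alpha L_M}< 2^{2\alpha}\,M^{-2\alpha/(2\alpha+1)}(L_M+2)^{2\alpha(q-1)/(2\alpha+1)}.
\]
This step needs \(q\geq1\). For \(q=0\), the factor \((L+1)^{q-1}\) only helps: from \(2^{(2\alpha+1)(L_M+1)}>M(L_M+2)\geq M\) one gets the bound without a logarithm, which is stronger than the claim. For \(q\geq1\), it remains to show \(L_M+2\leq C\log(e+M)\). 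This follows from \(2^{(2\alpha+1)L_M}\leq M\), so \(L_M\leq \log_2 M/(2\alpha+1)\), and hence \(L_M+2\leq C\log(e+M)\) for \(M\geq2\). Raising to the power \(2\alpha(q-1)/(2\alpha+1)\), adding the two terms and taking square roots gives \eqref{eq:mlaqi-rate}. The constant depends only on \(\alpha\), \(q\) and the constant of Theorem~\ref{thm:mlaqi-risk}, so it is independent of \(d\) and \(M\).

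\textbf{Main obstacle.} Theorem~\ref{thm:mlaqi-risk} is stated but its proof is not given. If it must be supplied, it follows by combining Theorem~\ref{thm:jlrn-iid-risk} (applicable via Lemma~\ref{lem:jdaq-uniform-embedding} and \(\widehat\Phi(\boldsymbol0)=1\)) with Theorem~\ref{thm:mlaqi-bias}, the energy bound \eqref{eq:mlaqi-energy}, and \(\|f\|_2\leq\|f\|_{s,\boldsymbol\gamma}\), dropping the nonpositive term. The energy estimate of Proposition~\ref{prop:mlaqi-multiplier-energy} requires \(\kappa>1/2\) through Lemma~\ref{lem:mlaqi-bandpass}, and this is implicitly assumed. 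Beyond that, the only delicate point is the bookkeeping of the \((L+2)\) versus \(\log M\) factor.
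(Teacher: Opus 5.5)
For $q\geq1$ your argument is correct, and it is the balancing the paper intends. The paper gives no separate proof of the corollary; it simply inserts $L_M$ into Theorem~\ref{thm:mlaqi-risk}. Your remark that Lemma~\ref{lem:mlaqi-bandpass}, and hence the energy bound, silently needs $\kappa>1/2$ is also accurate.

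\textbf{The $q=0$ case.} Your treatment of $q=0$ is wrong. The exponent $\alpha(q-1)/(2\alpha+1)=-\alpha/(2\alpha+1)$ is then negative. So the right-hand side of \eqref{eq:mlaqi-rate} is $CM^{-\alpha/(2\alpha+1)}[\log(e+M)]^{-\alpha/(2\alpha+1)}$, which is \emph{smaller} than the log-free bound $CM^{-\alpha/(2\alpha+1)}$ you derive. Your bound is therefore weaker than the claim and does not imply it.

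\textbf{Fix.} Keep the factor $L_M+2$ in $2^{(2\alpha+1)(L_M+1)}>M(L_M+2)$, and read the same inequality as $(2\alpha+1)(L_M+1)>\log_2 M$. This gives the lower bound $L_M+2\geq c\log(e+M)$, hence $(L_M+2)^{-2\alpha/(2\alpha+1)}\leq C[\log(e+M)]^{-2\alpha/(2\alpha+1)}$.

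In general, covering both signs of $q-1$ requires the two-sided comparison $L_M+2\asymp\log(e+M)$. The upper half comes from admissibility and is used when $q\geq1$; the lower half comes from maximality and is used when $q=0$.

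Alternatively, observe that for $q=0$ every $f\in H^s_{\mathrm{Kor},\boldsymbol\gamma,0}(\mathbb T^d)$ is constant and $\Phi^{\mathrm{ml},(d)}_{0,L}\equiv1$. Then $\mathcal Q^{\mathrm{ml},(d)}_{M,0,L}f=f$, and the error vanishes identically.
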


\section{Numerical experiments}

We present several numerical experiments to illustrate the theory developed in the preceding sections. We first test the convergence of full-grid and Monte Carlo schemes using kernels with moment conditions of different orders. We then compare the convolution errors and \(L_2\)-norms of single-level and multilevel ANOVA kernels. A dimension study contrasts weighted and unweighted multilevel schemes. Finally, we compare Monte Carlo sampling with two QMC discretizations of the convolution integral and explore their effect on the total \(L_2\)-error.

\subsection{Convergence test}
\label{sec:random-wendland-experiment}
To verify the convergence results of deterministic and stochastic quasi-interpolants established in Section~4, we consider the finitely smooth periodic function \cite{GriebelHamaekers2014}:
\begin{equation}
	f_1(\boldsymbol{x})
	=
	\prod_{j=1}^{d}
	\zeta_\mu
	\Big[
	2+
	\operatorname{sgn}\!\big(2\pi(x_j-\tfrac12)\big)
	\sin^\mu\!\big(2\pi(x_j-\tfrac12)\big)
	\Big],
	\label{eq:random-we33-target}
\end{equation}
where $\zeta_\mu$ is chosen such that \(\|f_1\|_{L_2(\mathbb T^d)}=1\). The function satisfies \(f_1\in H^{\mu+1/2-\varepsilon}(\mathbb T^d)\) for every \(\varepsilon>0\).  In the numerical experiments, we set \(\mu=8\), which provides sufficient regularity for all approximation orders considered below. 

The quasi-interpolants use a periodic kernel constructed from the compactly supported Wendland function \(\phi_{3,3}\). More precisely, we define
\begin{equation}
	\phi_{3,3}(r)
	=
	(1-r)_+^8(1+8r+25r^2+32r^3),
	\quad
	\psi_\rho(t)
	=
	\frac{\pi}{\rho A_\rho}
	\phi_{3,3}\!\left(
	\frac{2|\sin(\pi t)|}{\rho}
	\right),
	\label{eq:we33-periodic-kernel}
\end{equation}
where \((1-r)_+=\max\{1-r,0\}\). For \(m=2p\), we construct a higher-order kernel satisfying the moment conditions of order \(m\) by
\begin{equation}
	\psi_{\rho,m}(t)
	=
	\sum_{\ell=1}^{p}
	(-1)^{\ell+1}
	\binom{p}{\ell}
	\psi_{\rho\sqrt{\ell/p}}(t).
	\label{eq:we33-moment-kernel}
\end{equation}
The corresponding \(d\)-variate kernel is \(\Psi_{\rho,m}^{(d)}(\boldsymbol{x})
=\prod_{j=1}^{d}\psi_{\rho,m}(x_j)\), as defined in Section~4, and is used in both quasi-interpolation operators.

We first consider the full-grid periodic quasi-interpolation. The scheme uses the full tensor grid
\(\mathcal X_n=\{\boldsymbol{k}/n:
\boldsymbol{k}\in\{0,\ldots,n-1\}^d\}\), with \(h=1/n\) and \(n^d\) sampling points. Let \(\mathcal G\subset\mathbb T^d\) be a fixed evaluation grid. The discrete \(L_2\)-error of the deterministic quasi-interpolant is defined by
\begin{equation}
	\mathcal{E}_n
	=
	\left[
	\frac{1}{|\mathcal G|}
	\sum_{\boldsymbol{y}\in\mathcal G}
	\left|
	Q_{n,\rho}f_1(\boldsymbol{y})
	-f_1(\boldsymbol{y})
	\right|^2
	\right]^{1/2}.
	\label{eq:deterministic-L2-error}
\end{equation}
 The convergence rates are computed from discrete errors according to \(\operatorname{rate}_n=\log_2(\mathcal E_{n/2}/\mathcal E_n)\).  For targets with smoothness at least \(m\), the estimate in Section~4 gives the optimal scaled parameter \(\rho\asymp n^{-2\kappa/(2\kappa+m)}\) and the theoretical error rate \(O(n^{-r})\) with \(r=2\kappa m/(2\kappa+m)\). As \(\kappa\) increases, the convergence order \(r\) approaches \(m\). In the experiments, we set \(\rho=c_mn^{-1}\), with \(c_m=30,60,120\) for \(m=2,4,6\), respectively. These coefficients are not claimed to be optimal. Table~\ref{tab:wendland-fixed-kernel} shows the discrete errors and convergence rates in dimension \(d=2\), from which we observe that the measured orders are higher than the theoretical estimates and approach the moment condition order \(m\).

 We next study stochastic quasi-interpolation based on \(M\)
 independent, uniformly distributed samples on \(\mathbb T^d\).
 For each parameter set, we perform \(R=20\) independent trials and
define the root-mean-square error (RMSE) by
 \begin{equation}
 	\operatorname{RMSE}_M
 	=
 	\left[
 	\frac{1}{R}
 	\sum_{r=1}^{R}
 	\frac{1}{|\mathcal G|}
 	\sum_{\boldsymbol{y}\in\mathcal G}
 	\left|
 	Q^{(r)}_{M,\Psi_{\rho,m}^{(d)}}f_1(\boldsymbol{y})
 	-f_1(\boldsymbol{y})
 	\right|^2
 	\right]^{1/2},
 	\label{eq:stochastic-RMSE}
 \end{equation}
 where \(Q^{(r)}_{M,\Psi_{\rho,m}^{(d)}}f_1\) is the quasi-interpolant obtained in trial \(r\). We choose the shape parameter as $\rho=c_m M^{-1/(2m+d)}$, for which the estimate in Section~4 predicts a convergence rate of \(\mathcal O(M^{-m/(2m+d)})\).  Figure~\ref{fig:random-wendland-corrected-order-dimension}
 illustrates the dependence of the error on the moment condition order \(m\) and the dimension \(d\). The left panel fixes \(d=1\) and uses \(c_m=1,1.5,3\) for \(m=2,4,6\), respectively. The fitted convergence orders are \(0.40\), \(0.44\), and \(0.46\), in close agreement with the corresponding theoretical orders. It demonstrates that increasing \(m\) can improves the convergence rate and bring it closer to the optimal rate $1/2$. In addition, the right panel fixes \(m=6\) and \(c_6=3\) while varying \(d=1,2,3\). The corresponding fitted orders are \(0.459\), \(0.430\), and \(0.401\). For this tensor-product test problem, the RMSE increases with dimension, whereas the corresponding convergence rates decrease. Overall, these results support the predicted dependence of the convergence rate on both \(m\) and \(d\).

% BEGIN original tables/wendland_comparison_table
% Requires \usepackage{booktabs}. EOC uses unrounded source errors.
\begin{table*}[t]
\centering
\caption{Discrete errors and convergence rates of the full-grid periodic quasi-interpolation based on $\mathrm{WE}_{3,3}$ for approximating $f_1$ in dimension $d=2$, with moment condition orders $m=2,4,6$.}
\label{tab:wendland-fixed-kernel}
\small
\setlength{\tabcolsep}{4pt}
\renewcommand{\arraystretch}{1.12}
\begin{tabular}{@{}lcrcrcr@{}}
\toprule
& \multicolumn{2}{c}{$m=2$}
& \multicolumn{2}{c}{$m=4$}
& \multicolumn{2}{c}{$m=6$}\\
\cmidrule(lr){2-3}\cmidrule(lr){4-5}\cmidrule(l){6-7}
$n$ & error & rate & error & rate & error & rate\\
\midrule
128
& $1.0231\times10^{-2}$ & -
& $3.2510\times10^{-3}$ & -
& $7.5736\times10^{-3}$ & - \\
256
& $2.6318\times10^{-3}$ & 1.96
& $2.4055\times10^{-4}$ & 3.78
& $2.4447\times10^{-4}$ & 4.95 \\
512
& $6.6276\times10^{-4}$ & 1.99
& $1.5709\times10^{-5}$ & 3.94
& $4.6648\times10^{-6}$ & 5.71 \\
1024
& $1.6599\times10^{-4}$ & 2.00
& $9.9320\times10^{-7}$ & 3.98
& $7.6698\times10^{-8}$ & 5.93 \\
2048
& $4.1518\times10^{-5}$ & 2.00
& $6.8452\times10^{-8}$ & 3.86
& $1.4699\times10^{-9}$ & 5.71 \\
\bottomrule
\end{tabular}
\end{table*}
% END original tables/wendland_comparison_table

\subsection{Weight-adapted ANOVA quasi-interpolation}
We first compare the continuous convolution errors and kernel $L_2$-norms of the single-level and multilevel constructions of weight-adapted ANOVA kernels. To this end, we consider the weighted product function
\begin{equation}
	f_{2}(\boldsymbol x)
	=
	\zeta_0\prod_{j=1}^{d}
	\left[
	1+\gamma_j\left(x_j-\tfrac12\right)^2\sin(2\pi x_j-\pi)
	\right],
	\quad
	\boldsymbol x\in\mathbb T^{d},
	\label{eq:weighted-periodic-sine-target}
\end{equation}
where \(\gamma_j\) is the \(j\)-th coordinate weight, and the constant \(\zeta_0\) is chosen so that
\(\|f_{2}\|_{L_2(\mathbb T^{d})}=1\). 
The periodic extension of \(f_2\) belongs to \(H^s(\mathbb T^d)\) for every
\(s<5/2\).

We take \(d=q=10\), set \(\gamma_j=j^{-2}\), and use the periodic \(\mathrm{WE}_{3,3}\) kernel of order \(m=4\), with \(\alpha=1\). At each level \(L=0,\ldots,10\), both constructions use the scale $\rho_L=2^{-L}$. For the single-level kernel, the coordinatewise scales are \(\rho_{L,j}=\rho_L\gamma_j^{-1/2}\), and an interaction \(\boldsymbol u\) is retained if
\(\gamma_{\boldsymbol u}\geq\rho_L^2\). For the multilevel kernel, the entry level of coordinate \(j\) is $\lambda_j=\lfloor\log_2j\rfloor$. A tensor-product difference indexed by
\((\boldsymbol{u},\boldsymbol{n})\) is retained whenever \(\lambda_{\boldsymbol u}+|\boldsymbol n|_1\leq L\), where \(\lambda_{\boldsymbol{u}}\) denotes the corresponding interaction entry level. 

Figure~\ref{fig:Conv_err_comparison} presents the convolution errors and \(L_2\)-norms of the ANOVA kernels, defined by
\begin{equation}
	\begin{aligned}
		\mathcal{E}_L^{\mathrm{sl}}
		&=
		\left\|
		f_2-\mathcal{C}_{q,\rho_L}^{(d)}f_2
		\right\|_{L_2(\mathbb{T}^{d})},
		&
		\mathcal{E}_L^{\mathrm{ml}}
		&=
		\left\|
		f_2-\mathcal{C}_{q,L}^{\mathrm{ml},(d)}f_2
		\right\|_{L_2(\mathbb{T}^{d})},
		\\
		K_L^{\mathrm{sl}}
		&=
		\left\|
		\Phi_{q,\rho_L}^{(d)}
		\right\|_{L_2(\mathbb{T}^{d})},
		&
		K_L^{\mathrm{ml}}
		&=
		\left\|
		\Phi_{q,L}^{\mathrm{ml},(d)}
		\right\|_{L_2(\mathbb{T}^{d})}.
	\end{aligned}
	\label{eq:weighted-periodic-convolution-quantities}
\end{equation}
As shown in the figure, for \(L\geq1\), the multilevel method yields a substantially smaller convolution error than the single-level method. At the final level \(L=10\), the convolution errors are
$\mathcal{E}_{10}^{\mathrm{sl}}= 5.587\times10^{-10}$ and $\mathcal{E}_{10}^{\mathrm{ml}}=7.880\times10^{-11}$, respectively. The reduction in kernel norm is more pronounced. At
\(L=10\), the corresponding kernel norms are $K_{10}^{\mathrm{sl}} = 3.650\times10^{11}$,
$K_{10}^{\mathrm{ml}}=1.358\times10^{5}$, respectively. The hyperbolic level constraint avoids simultaneous finest-scale refinement of all coordinates in an active interaction and thereby limits the growth of the kernel norm. In this example, the multilevel construction gives a smaller convolution error together with a much smaller kernel norm.

The second experiment explores how coordinate weights affect the dependence of the multilevel quasi-interpolation error on the ambient dimension. We use weighted versions of the pyramid function in \cite[Eq.~(5.4)]{LippertPottsUllrich2023}:
\begin{equation}
	f_{3}(\bx)=\sqrt{72}\sum_{i=1}^{d/2}\alpha_{i,d}
	\left(\frac13-\max\{|x_{2i-1}|,|x_{2i}|\}\right),
	\quad 
	\alpha_{i,d}
	=
	\frac{i^{-2}}
	{\bigl(\sum_{j=1}^{d/2}j^{-4}\bigr)^{1/2}},
	\label{eq:dimexp-concise-target}
\end{equation}
where \(\bx\in[-1/2,1/2)^d\) and \(d\in\{2,4,8,16,32\}\). Each target function has zero mean and unit \(L_2(\mathbb T^d)\) norm. Its nonzero ANOVA components are supported on the \(d\) singletons and the pairs \(\{2i-1,2i\}\), \(i=1,\ldots,d/2\). Both approximants use exactly these supports.

The weighted and unweighted schemes use the periodic kernel \eqref{eq:we33-moment-kernel} with \(m=4\), \(q=2\), and \(\rho_\ell=6\cdot2^{-\ell}\). The weight-adapted scheme sets
$\gamma_{2i-1} = \gamma_{2i} = i^{-1.4}$.
The unweighted control sets all coordinate entry levels to zero. Figure~\ref{fig:anova-we33-dimension-study} plots the RMSE against the sample size \(N\). We see that the errors of the weight-adapted scheme vary less with $d$. At \(N=2^{20}\), both methods have RMSE \(2.605\times10^{-2}\) for \(d=2\). For \(d=32\), the weighted and unweighted errors are \(3.983\times10^{-2}\) and \(8.082\times10^{-2}\), respectively. These results indicate that the weight-adapted scheme is less sensitive to increasing dimension.

% BEGIN original figures/random_points_comparison.tex
% Self-contained figure. Requires pgfplots and its groupplots library.
\begin{figure*}[t]
\centering
\begingroup
\definecolor{comparisonLow}{RGB}{0,114,178}
\definecolor{comparisonMid}{RGB}{230,159,0}
\definecolor{comparisonHigh}{RGB}{0,158,115}
\begin{tikzpicture}
\begin{groupplot}[
 group style={group size=2 by 1,horizontal sep=1.2cm},
 width=0.42\textwidth,height=0.48\textwidth,
 xmode=log,ymode=log,log basis x=10,log basis y=10,
 grid=both,
 major grid style={line width=0.6pt, draw=gray!45, dashed},
 minor grid style={line width=0.4pt, draw=gray!25, dotted},
 axis line style={line width=0.55pt},tick style={line width=0.45pt},
 tick align=inside,xlabel={$M$},
 xlabel style={at={(axis description cs:0.5,-0.12)},anchor=north},
 title style={yshift=-1mm},label style={font=\small},
 tick label style={font=\footnotesize},
 every axis plot/.append style={line width=0.95pt},
 legend style={at={(0.02,0.02)},anchor=south west,font=\scriptsize,
  draw=none,fill=white,fill opacity=0.86,text opacity=1,
  inner sep=2pt,outer sep=1pt,legend columns=1,cells={anchor=west}}
]
% LEFT AXIS: edit these limits, tick positions and labels independently.
\nextgroupplot[
 title={$d=1$},
 ylabel={RMSE},
 xmin=300,xmax=4000000,ymin=0.003,ymax=2,
 ,legend style={at={(0.98,0.98)},anchor=north east}
]
\addplot[color=comparisonLow,only marks,mark=*,mark size=1.45pt] coordinates {
 (1024,1.7962e-1)
 (2048,1.3289e-1)
 (4096,1.0013e-1)
 (8192,8.0072e-2)
 (16384,6.1939e-2)
 (32768,4.5573e-2)
 (65536,3.5036e-2)
 (131072,2.6073e-2)
 (262144,1.9469e-2)
 (524288,1.4789e-2)
 (1048576,1.1170e-2)
};
\addlegendentry{$m=2$, rate=0.40}
\addplot[black,densely dashed,mark=none,line width=0.65pt,forget plot] coordinates {
 (1024,1.7976e-1)
 (1048576,1.1338e-2)
};
\addplot[color=comparisonMid,only marks,mark=square*,mark size=1.35pt] coordinates {
 (1024,1.5473e-1)
 (2048,1.0810e-1)
 (4096,7.9921e-2)
 (8192,6.2617e-2)
 (16384,4.6682e-2)
 (32768,3.3577e-2)
 (65536,2.5128e-2)
 (131072,1.7884e-2)
 (262144,1.2826e-2)
 (524288,9.5430e-3)
 (1048576,7.0458e-3)
};
\addlegendentry{$m=4$, rate=0.44}
\addplot[black,densely dashed,mark=none,line width=0.65pt,forget plot] coordinates {
 (1024,1.5324e-1)
 (1048576,7.1142e-3)
};
\addplot[color=comparisonHigh,only marks,mark=triangle*,mark size=1.7pt] coordinates {
 (1024,1.1535e-1)
 (2048,7.7667e-2)
 (4096,5.8102e-2)
 (8192,4.5128e-2)
 (16384,3.3023e-2)
 (32768,2.4073e-2)
 (65536,1.7442e-2)
 (131072,1.2294e-2)
 (262144,8.8085e-3)
 (524288,6.3696e-3)
 (1048576,4.5968e-3)
};
\addlegendentry{$m=6$, rate=0.46}
\addplot[black,densely dashed,mark=none,line width=0.65pt,forget plot] coordinates {
 (1024,1.13799185103e-1)
 (1048576,4.71720038292e-3)
};
% RIGHT AXIS: edit these limits, tick positions and labels independently.
\nextgroupplot[
 title={$m=6$},
 xmin=300,xmax=4000000,ymin=0.003,ymax=2,
]
\addplot[color=comparisonLow,only marks,mark=*,mark size=1.45pt] coordinates {
 (1024,1.1535e-1)
 (2048,7.7667e-2)
 (4096,5.8102e-2)
 (8192,4.5128e-2)
 (16384,3.3023e-2)
 (32768,2.4073e-2)
 (65536,1.7443e-2)
 (131072,1.2294e-2)
 (262144,8.8085e-3)
 (524288,6.3696e-3)
 (1048576,4.5968e-3)
};
\addlegendentry{$d=1$, rate=0.46}
\addplot[black,densely dashed,mark=none,line width=0.65pt,forget plot] coordinates {
 (1024,1.13799185103e-1)
 (1048576,4.71720038292e-3)
};
\addplot[color=comparisonMid,only marks,mark=square*,mark size=1.35pt] coordinates {
 (1024,4.1769e-1)
 (2048,3.1465e-1)
 (4096,2.3431e-1)
 (8192,1.7443e-1)
 (16384,1.2919e-1)
 (32768,9.5365e-2)
 (65536,6.9925e-2)
 (131072,5.2148e-2)
 (262144,3.8985e-2)
 (524288,2.8829e-2)
 (1048576,2.1676e-2)
};
\addlegendentry{$d=2$, rate=0.43}
\addplot[black,densely dashed,mark=none,line width=0.65pt,forget plot] coordinates {
 (1024,4.22449868062e-1)
 (1048576,2.14922223922e-2)
};
\addplot[color=comparisonHigh,only marks,mark=triangle*,mark size=1.7pt] coordinates {
 (1024,1.4752e+0)
 (2048,1.1185e+0)
 (4096,8.4850e-1)
 (8192,6.4287e-1)
 (16384,4.8436e-1)
 (32768,3.6894e-1)
 (65536,2.7982e-1)
 (131072,2.1108e-1)
 (262144,1.5983e-1)
 (524288,1.2121e-1)
 (1048576,9.2050e-2)
};
\addlegendentry{$d=3$, rate=0.40}
\addplot[black,densely dashed,mark=none,line width=0.65pt,forget plot] coordinates {
 (1024,1.47651435038e+0)
 (1048576,9.18992859544e-2)
};
\end{groupplot}
\end{tikzpicture}
\endgroup
\caption{RMSE of stochastic quasi-interpolation for $f_1$ with $\mu=8$. Each point is computed over $R=20$ independent trials at $M=2^k$, $k=10,\ldots,20$, using $\rho=c_mM^{-1/(2m+d)}$. The left panel compares $(m,c_m)=(2,1),(4,1.5),(6,3)$ for $d=1$; the right compares $d=1,2,3$ for $m=6$ and $c_6=3$.}
\label{fig:random-wendland-corrected-order-dimension}
\end{figure*}
% END original figures/random_points_comparison.tex

\subsection{QMC discretization}
A key advantage of the quasi-interpolation framework is its flexibility in discretizing the convolution integral. Different quadrature rules give rise to different quasi-interpolation schemes and may lead to substantially different approximation properties. In this subsection, we compare standard Monte Carlo
(MC) sampling with two deterministic QMC rules: an unscrambled Sobol' sequence and a rank-1 lattice generated by a component-by-component (CBC) construction. 

 Let \(B_3\) denote the centered cubic cardinal B-spline. We define $$v(t)
=\frac{1}{\sqrt{3/40}}\Big[B_3(4t-\pi^{-1})-\frac{1}{4}\Big],\quad t\in[-1/2,1/2)$$ and extend it periodically to \(\mathbb T\). The test function is
\begin{equation}
	f_4(\bx)
	=
	\sum_{i=1}^{d/2}
	\alpha_{i,d} v(x_{2i-1})v(x_{2i}).
	\label{eq:b3-pair-target}
\end{equation}

We use the weight-adapted ANOVA kernel with maximal interaction order \(q=2\), univariate kernel order \(m=4\), and adaptation parameter \(a=0.6\). The coordinate weights are \(\gamma_{2i-1}=\gamma_{2i}=i^{-1.4}\). The deterministic QMC discretizations use the same seven cardinalities
\[
N\in
\{10141,32077,99793,316801,995329,3161089,9966529\}.
\]
These values are prime, as required by the CBC construction described below. The Monte Carlo method uses the RMS of the total \(L_2\)-errors over $R$ independent samples. The Sobol rule uses prefixes of the unscrambled Sobol sequence \cite{JoeKuo2003,Sobol1967}, while the CBC discretization is based on the rank-1 lattice
\[
\mathcal X_N(\boldsymbol z)
=
\left\{
\left\{\frac{n\boldsymbol z}{N}\right\}
:
n=0,\ldots,N-1
\right\},
\quad
\boldsymbol z\in\{1,\ldots,N-1\}^{d},
\]
where the inner braces denote the componentwise fractional part. For each \(N\), the generating vector is constructed component-by-component \cite{Kuo2003} and implemented by a fast algorithm \cite{NuyensCools2006}.

Figure~\ref{fig:b3-anova-three-rule-d4-d6} compares the $L_2$-errors of three discretization methods. For \(d=4\), MC and Sobol use the shape parameter \(\rho=4^{-7/6}\), while CBC uses \(\rho=6^{-7/6}\). For \(d=6\), all three methods employ the same shape parameter \(\rho=9^{-7/6}\). Least-squares fits over all sample sizes give decay rates \(0.50\), \(0.89\), and \(1.59\) for MC, Sobol, and CBC in dimension \(d=4\). The corresponding values for
\(d=6\) are \(0.50\), \(0.77\), and \(0.97\). The MC results agree with the expected \(N^{-1/2}\) decay. Both QMC rules yield faster error decay and more accurate approximations than MC sampling.

% BEGIN original figures/multiscale_experiment.tex

% END original figures/multiscale_experiment.tex

% BEGIN original figures/multilevel_dimension_convergence.tex
% Insertable PGFPlots snippet generated from level_results_4sig.csv.
% Required in the manuscript preamble:
%   \usepackage{pgfplots}
%   \usepgfplotslibrary{groupplots}
%   \pgfplotsset{compat=1.18}
%
% All ordinate data use four significant digits. Distinct colors and line
% styles identify the two methods; no fitted or theoretical-order line is shown.
\begin{figure*}[t]
\centering
\begingroup
\definecolor{dimBlue}{RGB}{0,114,178}
\definecolor{vermillion}{RGB}{213,94,0}
\begin{tikzpicture}
	\begin{groupplot}[
		group style={group size=2 by 1,horizontal sep=1.5cm},
		width=0.46\textwidth,
		height=0.48\textwidth,
		xmode=log,
		log basis x=10,
		x dir=reverse,
		xmin=8e-4,
		xmax=1.2,
		xtick={1,0.1,0.01,0.001},
		xticklabels={$1$,$10^{-1}$,$10^{-2}$,$10^{-3}$},
		minor xtick={0.002,0.003,0.004,0.005,0.006,0.007,0.008,0.009,
			0.02,0.03,0.04,0.05,0.06,0.07,0.08,0.09,
			0.2,0.3,0.4,0.5,0.6,0.7,0.8,0.9},
		ymode=log,
		log basis y=10,
		grid=major,
		grid style={gray!25,line width=0.25pt},
		major grid style={gray!45,solid,line width=0.25pt,dashed},
		axis line style={line width=0.55pt},
		tick style={line width=0.45pt},
		minor tick style={line width=0.30pt},
		xtick pos=both,
		ytick pos=both,
		tick align=inside,
		major tick length=3pt,
		minor tick length=1.5pt,
		scaled ticks=false,
		minor x tick num=0,
		minor y tick num=0,
		label style={font=\small},
		tick label style={font=\scriptsize},
		title style={font=\small},
		every axis plot/.append style={solid,line width=0.9pt},
		legend style={draw=none,fill=none,font=\scriptsize}
		]
		
		\nextgroupplot[
		title={Convolution error},
		xlabel={$\rho_L$},
		ylabel={$\mathcal{E}_L$},
		ymin=5e-11,
		ymax=8e-2,
		ytick={1e-10,1e-8,1e-6,1e-4,1e-2},
		yticklabels={$10^{-10}$,$10^{-8}$,$10^{-6}$,$10^{-4}$,$10^{-2}$},
		minor ytick={6e-11,7e-11,8e-11,9e-11,2e-10,3e-10,4e-10,5e-10,6e-10,7e-10,8e-10,9e-10,1e-9,2e-9,3e-9,4e-9,5e-9,6e-9,7e-9,8e-9,9e-9,2e-8,3e-8,4e-8,5e-8,6e-8,7e-8,8e-8,9e-8,1e-7,2e-7,3e-7,4e-7,5e-7,6e-7,7e-7,8e-7,9e-7,2e-6,3e-6,4e-6,5e-6,6e-6,7e-6,8e-6,9e-6,1e-5,2e-5,3e-5,4e-5,5e-5,6e-5,7e-5,8e-5,9e-5,2e-4,3e-4,4e-4,5e-4,6e-4,7e-4,8e-4,9e-4,1e-3,2e-3,3e-3,4e-3,5e-3,6e-3,7e-3,8e-3,9e-3,2e-2,3e-2,4e-2,5e-2,6e-2,7e-2,8e-2},
		legend pos=north east
		]
		\addplot[dimBlue,solid,mark=*,mark size=1.55pt] coordinates {
			(1,0.03241)
			(0.5,0.01582)
			(0.25,0.0065)
			(0.125,0.001811)
			(0.0625,9.279e-05)
			(0.03125,2.369e-05)
			(0.015625,4.462e-06)
			(0.0078125,2.569e-07)
			(0.00390625,5.891e-08)
			(0.001953125,8.397e-09)
			(0.0009765625,5.587e-10)
		};
		\addlegendentry{Single-level}
		\addplot[vermillion,densely dashed,mark=square*,mark size=1.45pt] coordinates {
			(1,0.03241)
			(0.5,0.009605)
			(0.25,0.002539)
			(0.125,9.408e-05)
			(0.0625,2.363e-05)
			(0.03125,3.627e-06)
			(0.015625,1.788e-07)
			(0.0078125,3.542e-08)
			(0.00390625,3.797e-09)
			(0.001953125,4.597e-10)
			(0.0009765625,7.88e-11)
		};
		\addlegendentry{Multilevel}
		
		\nextgroupplot[
		title={$L_2$-energy},
		xlabel={$\rho_L$},
		ylabel={$K_L$},
		ymin=3,
		ymax=8e11,
		ytick={1e1,1e3,1e5,1e7,1e9,1e11},
		yticklabels={$10^1$,$10^3$,$10^5$,$10^7$,$10^9$,$10^{11}$},
		minor ytick={3e0,4e0,5e0,6e0,7e0,8e0,9e0,2e1,3e1,4e1,5e1,6e1,7e1,8e1,9e1,1e2,2e2,3e2,4e2,5e2,6e2,7e2,8e2,9e2,2e3,3e3,4e3,5e3,6e3,7e3,8e3,9e3,1e4,2e4,3e4,4e4,5e4,6e4,7e4,8e4,9e4,2e5,3e5,4e5,5e5,6e5,7e5,8e5,9e5,1e6,2e6,3e6,4e6,5e6,6e6,7e6,8e6,9e6,2e7,3e7,4e7,5e7,6e7,7e7,8e7,9e7,1e8,2e8,3e8,4e8,5e8,6e8,7e8,8e8,9e8,2e9,3e9,4e9,5e9,6e9,7e9,8e9,9e9,1e10,2e10,3e10,4e10,5e10,6e10,7e10,8e10,9e10,2e11,3e11,4e11,5e11,6e11,7e11,8e11},
		legend pos=north west
		]
		\addplot[dimBlue,solid,mark=*,mark size=1.55pt] coordinates {
			(1,4.138)
			(0.5,24.2)
			(0.25,70.26)
			(0.125,865.5)
			(0.0625,3911)
			(0.03125,82570)
			(0.015625,649000)
			(0.0078125,20690000)
			(0.00390625,304900000)
			(0.001953125,2550000000)
			(0.0009765625,365000000000)
		};
		\addlegendentry{Single-level}
		\addplot[vermillion,densely dashed,mark=square*,mark size=1.45pt] coordinates {
			(1,4.138)
			(0.5,24.21)
			(0.25,85.82)
			(0.125,253.9)
			(0.0625,767.6)
			(0.03125,1955)
			(0.015625,5031)
			(0.0078125,12050)
			(0.00390625,27820)
			(0.001953125,62560)
			(0.0009765625,135800)
		};
		\addlegendentry{Multilevel}
		
	\end{groupplot}
\end{tikzpicture}
\endgroup
\caption{Convolution errors (left) and corresponding kernel $L_2$-norms (right) for single-level and multilevel ANOVA approximation of $f_2(\boldsymbol{x})$. }
	\label{fig:Conv_err_comparison}
\end{figure*}

\begin{figure*}[t]
	\centering
	\begingroup
	\definecolor{dimBlue}{RGB}{0,114,178}
	\definecolor{dimOrange}{RGB}{230,159,0}
	\definecolor{dimGreen}{RGB}{0,158,115}
	\definecolor{dimPurple}{RGB}{204,121,167}
	\definecolor{dimRed}{RGB}{213,94,0}
	\begin{tikzpicture}
		\begin{groupplot}[
			group style={group size=2 by 1,horizontal sep=1.15cm},
			width=0.46\textwidth,height=0.48\textwidth,
			xmode=log,ymode=log,log basis x=10,log basis y=10,
			xlabel={$N$},
			grid=both,
			major grid style={line width=0.6pt, draw=gray!45, dashed},
			minor grid style={line width=0.4pt, draw=gray!25, dotted},
			axis line style={line width=0.55pt},tick style={line width=0.45pt},tick align=inside,
			label style={font=\small},tick label style={font=\scriptsize},title style={font=\small},
			every axis plot/.append style={line width=0.9pt},
			legend style={draw=none,fill=white,fill opacity=0.94,text opacity=1,font=\scriptsize,
				cells={anchor=west},inner sep=2pt,at={(0.02,0.02)},anchor=south west}
			]
			% LEFT axis: independent limits and tick labels.
			\nextgroupplot[title={Unweighted multilevel},ylabel={RMSE},
			xmin=500,xmax=2000000,ymin=0.008,ymax=1.2,
			]
			\addplot[dimBlue,solid,mark=*,mark size=1.7pt] coordinates {
				(1024,1.860e-1)
				(2048,1.549e-1)
				(4096,1.313e-1)
				(8192,1.042e-1)
				(16384,8.334e-2)
				(32768,7.055e-2)
				(65536,5.953e-2)
				(131072,4.684e-2)
				(262144,3.943e-2)
				(524288,3.260e-2)
				(1048576,2.605e-2)
			};
			\addlegendentry{$d=2$}
			\addplot[dimOrange,solid,mark=square*,mark size=1.7pt] coordinates {
				(1024,2.378e-1)
				(2048,1.867e-1)
				(4096,1.519e-1)
				(8192,1.382e-1)
				(16384,1.059e-1)
				(32768,8.352e-2)
				(65536,7.850e-2)
				(131072,5.906e-2)
				(262144,4.683e-2)
				(524288,4.255e-2)
				(1048576,3.254e-2)
			};
			\addlegendentry{$d=4$}
			\addplot[dimGreen,solid,mark=triangle*,mark size=1.7pt] coordinates {
				(1024,3.187e-1)
				(2048,2.409e-1)
				(4096,1.845e-1)
				(8192,1.861e-1)
				(16384,1.379e-1)
				(32768,1.052e-1)
				(65536,1.066e-1)
				(131072,7.820e-2)
				(262144,5.938e-2)
				(524288,5.808e-2)
				(1048576,4.280e-2)
			};
			\addlegendentry{$d=8$}
			\addplot[dimPurple,solid,mark=diamond*,mark size=1.7pt] coordinates {
				(1024,4.387e-1)
				(2048,3.203e-1)
				(4096,2.394e-1)
				(8192,2.578e-1)
				(16384,1.860e-1)
				(32768,1.384e-1)
				(65536,1.470e-1)
				(131072,1.062e-1)
				(262144,7.808e-2)
				(524288,8.067e-2)
				(1048576,5.835e-2)
			};
			\addlegendentry{$d=16$}
			\addplot[dimRed,solid,mark=pentagon*,mark size=1.7pt] coordinates {
				(1024,6.126e-1)
				(2048,4.398e-1)
				(4096,3.206e-1)
				(8192,3.614e-1)
				(16384,2.577e-1)
				(32768,1.877e-1)
				(65536,2.052e-1)
				(131072,1.467e-1)
				(262144,1.061e-1)
				(524288,1.129e-1)
				(1048576,8.082e-2)
			};
			\addlegendentry{$d=32$}
			% RIGHT axis: independent limits and tick labels.
			\nextgroupplot[title={Weight-adapted multilevel},
			xmin=500,xmax=2000000,ymin=0.008,ymax=1.2,
			]
			\addplot[dimBlue,solid,mark=*,mark size=1.7pt] coordinates {
				(1024,1.860e-1)
				(2048,1.549e-1)
				(4096,1.313e-1)
				(8192,1.042e-1)
				(16384,8.334e-2)
				(32768,7.055e-2)
				(65536,5.953e-2)
				(131072,4.684e-2)
				(262144,3.943e-2)
				(524288,3.260e-2)
				(1048576,2.605e-2)
			};
			\addlegendentry{$d=2$}
			\addplot[dimOrange,solid,mark=square*,mark size=1.7pt] coordinates {
				(1024,2.264e-1)
				(2048,1.988e-1)
				(4096,1.791e-1)
				(8192,1.178e-1)
				(16384,9.629e-2)
				(32768,8.380e-2)
				(65536,6.697e-2)
				(131072,5.364e-2)
				(262144,4.566e-2)
				(524288,3.652e-2)
				(1048576,2.947e-2)
			};
			\addlegendentry{$d=4$}
			\addplot[dimGreen,solid,mark=triangle*,mark size=1.7pt] coordinates {
				(1024,2.468e-1)
				(2048,2.191e-1)
				(4096,1.979e-1)
				(8192,1.299e-1)
				(16384,1.069e-1)
				(32768,9.309e-2)
				(65536,7.741e-2)
				(131072,6.348e-2)
				(262144,5.538e-2)
				(524288,4.074e-2)
				(1048576,3.299e-2)
			};
			\addlegendentry{$d=8$}
			\addplot[dimPurple,solid,mark=diamond*,mark size=1.7pt] coordinates {
				(1024,2.520e-1)
				(2048,2.249e-1)
				(4096,2.045e-1)
				(8192,1.377e-1)
				(16384,1.153e-1)
				(32768,1.021e-1)
				(65536,8.299e-2)
				(131072,6.940e-2)
				(262144,6.144e-2)
				(524288,4.549e-2)
				(1048576,3.807e-2)
			};
			\addlegendentry{$d=16$}
			\addplot[dimRed,solid,mark=pentagon*,mark size=1.7pt] coordinates {
				(1024,2.527e-1)
				(2048,2.258e-1)
				(4096,2.054e-1)
				(8192,1.391e-1)
				(16384,1.171e-1)
				(32768,1.041e-1)
				(65536,8.521e-2)
				(131072,7.156e-2)
				(262144,6.368e-2)
				(524288,4.733e-2)
				(1048576,3.983e-2)
			};
			\addlegendentry{$d=32$}
		\end{groupplot}
	\end{tikzpicture}
	\endgroup
	\caption{RMSE of multilevel ANOVA quasi-interpolation for $f_3(\boldsymbol{x})$ as the ambient dimension increases. The unweighted (left) and weight-adapted (right) schemes use centered $\mathrm{WE}_{3,3}$ kernels of order $m=4$ with maximal interaction order $q=2$.}
	\label{fig:anova-we33-dimension-study}
\end{figure*}

\begin{figure*}[t]
	\centering
	\begingroup
	\definecolor{dimBlue}{RGB}{0,114,178}
	\definecolor{dimOrange}{RGB}{230,159,0}
	\definecolor{dimGreen}{RGB}{0,158,115}
	\begin{tikzpicture}
		\begin{groupplot}[
			group style={group size=2 by 1,horizontal sep=1.35cm},
			width=0.46\textwidth,
			height=0.48\textwidth,
			xmode=log,
			ymode=log,
			log basis x=10,
			log basis y=10,
			xlabel={$N$},
			grid=both,
			major grid style={line width=0.6pt, draw=gray!45, dashed},
			minor grid style={line width=0.4pt, draw=gray!25, dotted},
			%minor x tick num=3, minor y tick num=3,
			%grid= both,
			%grid style={gray!25,line width=0.4pt,densely dashed},
%			axis line style={line width=0.55pt},
%			tick style={line width=0.45pt},
%			tick align=inside,
%			label style={font=\small},
			tick label style={font=\scriptsize},
			title style={font=\small},
			every axis plot/.append style={solid,line width=0.9pt},
			legend style={
				draw=none,
				fill=white,
				fill opacity=0.94,
				text opacity=1,
				font=\scriptsize,
				cells={anchor=west},
				inner sep=2pt,
				at={(0.02,0.02)},
				anchor=south west
			}
			]
			
			% Left panel: updated d=4 CBC bandwidth.
			\nextgroupplot[
			title={$d=4$},
			ylabel={$L_2$-error},
			xmin=5e3,
			xmax=2e7,
			ymin=2e-5,
			ymax=10,
			]
			\addplot[dimBlue,mark=*,mark size=1.7pt] coordinates {
				(10141,1.454)
				(32077,8.194e-1)
				(99793,4.670e-1)
				(316801,2.622e-1)
				(995329,1.492e-1)
				(3161089,8.337e-2)
				(9966529,4.680e-2)
			};
			\addlegendentry{MC}
			
			\addplot[dimOrange,mark=square*,mark size=1.7pt] coordinates {
				(10141,1.207)
				(32077,4.213e-1)
				(99793,1.843e-1)
				(316801,7.626e-2)
				(995329,2.548e-2)
				(3161089,7.723e-3)
				(9966529,2.552e-3)
			};
			\addlegendentry{Sobol}
			
			\addplot[dimGreen,mark=triangle*,mark size=1.7pt] coordinates {
				(10141,1.989)
				(32077,7.246e-1)
				(99793,1.628e-1)
				(316801,6.319e-2)
				(995329,4.432e-3)
				(3161089,4.439e-4)
				(9966529,3.679e-5)
			};
			\addlegendentry{CBC}
			
			% Right panel: independent vertical limits and visible tick labels.
			\nextgroupplot[
			title={$d=6$},
			xmin=5e3,
			xmax=2e7,
			ymin=2e-5,
			ymax=10,
			]
			\addplot[dimBlue,mark=*,mark size=1.7pt] coordinates {
				(10141,4.734)
				(32077,2.659)
				(99793,1.509)
				(316801,8.464e-1)
				(995329,4.783e-1)
				(3161089,2.680e-1)
				(9966529,1.510e-1)
			};
			\addlegendentry{MC}
			
			\addplot[dimOrange,mark=square*,mark size=1.7pt] coordinates {
				(10141,4.653)
				(32077,2.363)
				(99793,1.155)
				(316801,4.870e-1)
				(995329,1.661e-1)
				(3161089,6.970e-2)
				(9966529,2.379e-2)
			};
			\addlegendentry{Sobol}
			
			\addplot[dimGreen,mark=triangle*,mark size=1.7pt] coordinates {
				(10141,4.530)
				(32077,2.147)
				(99793,8.645e-1)
				(316801,5.149e-1)
				(995329,9.636e-2)
				(3161089,2.641e-2)
				(9966529,5.280e-3)
			};
			\addlegendentry{CBC lattice}
			
		\end{groupplot}
	\end{tikzpicture}
	\endgroup
	\caption{$L_2$ approximation errors for $f_4(\boldsymbol{x})$ using weighted ANOVA quasi-interpolation with MC, Sobol, and CBC discretization.}
	\label{fig:b3-anova-three-rule-d4-d6}
\end{figure*}
% END original figures/multilevel_dimension_convergence.tex

\section{Conclusion}

Moment and decay assumptions on the univariate function yield high-order approximation by normalized periodic convolution kernels. Once the convolution integral is discretized, however, convolution accuracy alone does not determine the error of the resulting quasi-interpolant. Our full-grid and independent-sampling estimates quantify this additional error. In the latter case, an exact mean-square identity isolates the contribution of kernel \(L_2\)-energy to the sampling variance, explaining the unfavorable dimension dependence of tensor-product kernels at fine scales.
This observation motivates the weight-adapted ANOVA constructions, which allocate resolution according to coordinate importance and restrict the interaction order. Under summable coordinate weights and fixed superposition order, both the single-level and multilevel quasi-interpolants admit dimension-uniform error bounds. For the multilevel scheme, the algebraic RMSE exponent \(\alpha/(2\alpha+1)\) is independent of \(q\), although the logarithmic factor and constants may depend on it.
The numerical results are consistent with the theoretical convergence behavior. Multilevel refinement reduces the convolution error and substantially lowers the kernel norm. Sobol and CBC discretizations give smaller total errors than Monte Carlo sampling. We will investigate other QMC rules for the convolution integral and seek optimal bounds that balance quadrature error with convolution bias in high-dimensional periodic approximation.

\section*{Acknowledgment}

The work was partially supported by National Natural Science Foundation of China (No. 12571407),
Basic Research Program of Jiangsu (No. BK20252037), the Fundamental Research Funds for the Central Universities (No.30926WZCT202601009), the Stability Support Program for Basic Research (No. WZJC202601003) and a Jiangsu Shuangchuang Team program (No.
JSSCTD202449).

\backmatter

\begin{appendices}

\section{\textbf{Proof of Theorem \ref{thm:jfg-full-grid-benchmark}}}
\label{Appendix1}
\begin{proof}
	 We divide the proof into two steps.
	 
	 \textbf{Step I: univariate estimate.} Write $H^s(\T):=H^s_{\text{mix}}(\T)$, and let
	$$J_n:=\{-\lfloor n/2\rfloor,\ldots,\lceil n/2\rceil-1\},$$ and define the discrete Fourier coefficients by
	$$\widetilde f_n(\ell):=\frac{1}{n}\sum_{j=0}^{n-1}
	f\big(\frac{j}{n}\big)e^{-2\pi\mathrm i \ell j/n}.$$
	Since \(s>1/2\), the Fourier series of \(f\in H^s(\T)\) is absolutely convergent, and discrete orthogonality yields the aliasing identity
	\[
	\widetilde f_n(\ell)
	=
	\sum_{k\in\Z}\widehat f(\ell+kn),
	\quad \ell\in J_n.
	\]
	Let \(P_nf:=\sum_{\ell\in J_n}\widetilde f_n(\ell)\chi_\ell\). Then the Fourier representation of the univariate full-grid rule is
	\[
	\mathcal Q_{n,\rho}f
	=
	\mathcal C_\rho P_nf+\mathcal R_{n,\rho}f,
	\]
	where
	\[
	\mathcal R_{n,\rho}f
	:=
	\sum_{\ell\in J_n}\widetilde f_n(\ell)
	\sum_{q\neq0}
	\widehat\psi_\rho(\ell+qn)\chi_{\ell+qn}.
	\]
	The error can be decomposed into three parts:
	\begin{equation}\label{eq:err_decomposition_1d}
			f-\mathcal Q_{n,\rho}f
		=
		(f-\mathcal C_\rho f)
		+\mathcal C_\rho(f-P_nf)
		-\mathcal R_{n,\rho}f.
	\end{equation}

	The first term on the right-hand side is the convolution approximation error, which, by Theorem~\ref{thm:jfg-continuous-bias} with $d=1$, satisfies
	$\|f-\mathcal C_\rho f\|_{L_2(\T)}
	\leq
	C\rho^a\|f\|_{H^s(\T)}$. In addition, Assumption~\ref{ass:jfg-kernel1} implies
	$\|\mathcal C_\rho\|_{L_2(\T)\to L_2(\T)}
	\leq c_2$.
	
	Next, we bound the second term on the right-hand side of \eqref{eq:err_decomposition_1d}. For \(\ell\in J_n\) and \(k\neq0\), we have
	\(|\ell+k n|\geq n|k|/2\), and hence
	$\sum_{k\neq0}\langle \ell+k n\rangle^{-2s}
	\leq C_s n^{-2s}$.
	The weighted Cauchy--Schwarz inequality gives
	\[
	\Big|
	\sum_{k\neq0}\widehat f(\ell+kn)
	\Big|^2
	\leq
	C_s n^{-2s}
	\sum_{k\neq0}
	\langle \ell+kn\rangle^{2s}
	|\widehat f(\ell+kn)|^2.
	\]
	The sets \(\{\ell+kn:k\in\Z\}\), \(\ell\in J_n\), are the disjoint residue classes modulo \(n\). Summing over \(\ell\in J_n\) and using
	$\sum_{j\notin J_n}|\widehat f(j)|^2
	\leq
	C_s n^{-2s}\|f\|_{H^s(\T)}^2$,
	we obtain
	\[
	\big\|
	f-P_nf
	\big\|_{L_2(\T)}
	\leq
	C_s n^{-s}\|f\|_{H^s(\T)}.
	\]
	Furthermore, an application of the weighted Cauchy--Schwarz inequality again yields
	\[
	|\widetilde f_n(\ell)|^2
	\leq
	\Big(
	\sum_{k\in\Z}\langle \ell+kn\rangle^{-2s}
	\Big)
	\Big(
	\sum_{k\in\Z}
	\langle \ell+kn\rangle^{2s}
	|\widehat f(\ell+kn)|^2
	\Big).
	\]
	Since
	$\sup_{\ell\in J_n}
	\sum_{k\in\Z}\langle \ell+kn\rangle^{-2s}
	\leq C_s$,
	summing over the residue classes gives
	\begin{equation}
		\sum_{\ell\in J_n}|\widetilde f_n(\ell)|^2
		\leq
		C_s\|f\|_{H^s(\T)}^2.
		\label{eq:jfg-full-grid-aliasing}
	\end{equation}

	Then, we estimate $\mathcal R_{n,\rho}f$. By the decay condition in
	Assumption~\ref{ass:jfg-kernel1}, for \(\ell\in J_n\) and \(q\neq0\), we have
	\[
	|\widehat\psi_\rho(\ell+qn)|^2
	\leq
	C(n\rho)^{-4\kappa}|q|^{-4\kappa}.
	\]
	Since \(4\kappa>1\), it follows that \(\psi_\rho\in L_2(\T)\) and
	\[
	\sum_{q\neq0}
	|\widehat\psi_\rho(\ell+qn)|^2
	\leq
	C(n\rho)^{-4\kappa},
	\quad \ell\in J_n.
	\]
	%The frequencies \(k+qn\), with \(k\in J_n\) and \(q\neq0\), are pairwise distinct.
	 Parseval's identity and
	\eqref{eq:jfg-full-grid-aliasing} imply
	\begin{align*}
		\|\mathcal R_{n,\rho}f\|_{L_2(\T)}^2
		&=
		\sum_{\ell\in J_n}
		|\widetilde f_n(\ell)|^2
		\sum_{q\neq0}
		|\widehat\psi_\rho(\ell+qn)|^2
		\\
		&\leq
		C(n\rho)^{-4\kappa}
		\sum_{\ell\in J_n}|\widetilde f_n(\ell)|^2
		\\
		&\leq
		C(n\rho)^{-4\kappa}\|f\|_{H^s(\T)}^2.
	\end{align*}
	
    Combining the above results, we conclude that
	\begin{equation}
		\|f-\mathcal Q_{n,\rho}f\|_{L_2(\T)}
		\leq
		C\bigl(
		\rho^a+n^{-s}+(n\rho)^{-2\kappa}
		\bigr)
		\|f\|_{H^s(\T)}.
		\label{eq:jfg-full-grid-one-dimensional}
	\end{equation}
	
	\smallskip
	\textbf{Step II: multivariate estimate.} Note that the following Hilbert tensor-product representations hold:
	\[
	H^s_{\mathrm{mix}}(\T^d)
	=
	\widehat{\bigotimes}_{j=1}^d H^s(\T),
	\quad
	L_2(\T^d)
	=
	\widehat{\bigotimes}_{j=1}^d L_2(\T).
	\]
	Because \(s>1/2\), point evaluation is bounded on \(H^s(\T)\). Together with \(\psi_\rho\in L_2(\T)\), this shows that the univariate rule defines a bounded operator
	\(Q:H^s(\T)\to L_2(\T)\).
	
	The tensor-product structure of the grid and kernel gives
	\(\mathcal Q_{n,\rho}=Q^{\otimes d}\). Similarly, the canonical embedding from \(H^s_{\mathrm{mix}}(\T^d)\) into \(L_2(\T^d)\) is \(I^{\otimes d}\), where
	\(I:H^s(\T)\to L_2(\T)\) denotes the univariate embedding. Using the telescoping identity, we get
	\[
	I^{\otimes d}-Q^{\otimes d}
	=
	\sum_{j=1}^d
	Q^{\otimes(j-1)}
	\otimes(I-Q)
	\otimes I^{\otimes(d-j)},
	\]
	and
	\[
	\|I^{\otimes d}-Q^{\otimes d}\|
	\leq
	\sum_{j=1}^d
	\|Q\|^{j-1}\|I-Q\|\|I\|^{d-j}.
	\]
	By \eqref{eq:jfg-full-grid-one-dimensional}, we obtain
	\[
	\|I-Q\|
	\leq
	C\bigl(
	\rho^a+n^{-s}+(n\rho)^{-2\kappa}
	\bigr).
	\]
	Moreover, \(\|Q\|\leq\|I\|+\|I-Q\|\). Because \(\|I\|\) and \(\|Q\|\) are uniformly bounded, we have
	\[
	\|I^{\otimes d}-Q^{\otimes d}\|
	\leq
	C_d\bigl(
	\rho^a+n^{-s}+(n\rho)^{-2\kappa}
	\bigr),
	\]
	which completes the proof.
\end{proof}

\section{\textbf{Proof of Lemma \ref{lem:mlaqi-bandpass}}}
\label{Appendix2}
\begin{proof}
%	For \(n\geq1\), according to
%	Assumption~\ref{ass:jfg-kernel}, we can get
%	\begin{align*}
%		|d_n(\ell)|
%		&\leq
%		\bigl|1-\widehat\psi_{\rho_0 2^{-n}}(\ell)\bigr|
%		+
%		\bigl|1-\widehat\psi_{\rho_0 2^{-(n-1)}}(\ell)\bigr|\\
%		&\leq
%		C\min\left\{
%		1,\,
%		(\rho_0 2^{-n}|\ell|)^m
%		\right\}.
%	\end{align*}
%	Applying the decay estimate to the same two multipliers yields
%	\[
%	|d_n(\ell)|
%	\leq
%	C(1+\rho_0 2^{-n}|\ell|)^{-2\kappa}.
%	\]
%	This proves \eqref{eq:mlaqi-bandpass-bound}, while
%	the case $n=0$ follows directly from the definition of
%	\(\vartheta_0\).
	
	Let
	$N(\ell)
	:=
	\max\left\{
	0,\,
	\left\lceil\log_2(\rho_0|\ell|)\right\rceil
	\right\}$.
	For \(1\leq n\leq N(\ell)\), the decay estimate gives
	\[
	2^{\beta n}|d_n(\ell)|
	\leq
	C2^{\beta N(\ell)}
	2^{-(\beta+2\kappa)(N(\ell)-n)}.
	\]
	For \(n>N(\ell)\), we have
	\[
	2^{\beta n}|d_n(\ell)|
	\leq
	C2^{\beta N(\ell)}
	2^{-(m-\beta)(n-N(\ell))}.
	\]
	The first geometric series converges because $\beta+2\kappa>0$, and the second because \(\beta<m\). The term \(n=0\) is uniformly bounded, and \(2^{N(\ell)}\leq C\langle \ell\rangle\), proving \eqref{eq:mlaqi-weighted-bandpass-sum}.
	
	It remains to prove almost orthogonality. Parseval's identity yields
	$\|\vartheta_n\|_2^2 = \sum_{\ell\in\Z}|d_n(\ell)|^2
		\leq C2^n$.
	This proves \eqref{eq:mlaqi-almost-orthogonality} when \(n=v\). Now suppose that \(0\leq n<v\), and write \(h:=v-n\) and
	\(a_n:=\rho_0 2^{-n}\). Using decay for \(d_n\), consistency for \(d_v\), and Parseval's identity, we obtain
	\begin{equation}
		\bigl|
		\langle\vartheta_n,\vartheta_v\rangle
		\bigr|
		\leq
		C\sum_{0\neq \ell\in\Z}
		(1+a_n|\ell|)^{-2\kappa}
		\min\left\{
		1,\,
		(2^{-h}a_n|\ell|)^m
		\right\}.
		\label{eq:mlaqi-cross-sum}
	\end{equation}
	Split the sum into the ranges \(0<a_n|\ell|\leq1\),
	\(1<a_n|\ell|\leq2^h\), and \(a_n|\ell|>2^h\). Comparison with the
	corresponding integrals gives
	\begin{align*}
		\sum_{0<a_n|\ell|\leq1}
		(1+a_n|\ell|)^{-2\kappa}
		(2^{-h}a_n|\ell|)^m
		&\leq
		Ca_n^{-1}2^{-mh},
		\\
		\sum_{1<a_n|\ell|\leq2^h}
		(a_n|\ell|)^{-2\kappa}
		(2^{-h}a_n|\ell|)^m
		&\leq
		Ca_n^{-1}2^{-mh}
		\int_1^{2^h}x^{m-2\kappa}\,dx,
        \\
		\sum_{a_n|\ell|>2^h}
		(a_n|\ell|)^{-2\kappa}
		&\leq
		Ca_n^{-1}
		\int_{2^h}^{\infty}x^{-2\kappa}\,dx.
	\end{align*}
	The last integral is finite because \(2\kappa>1\) and is of order \(2^{-(2\kappa-1)h}\). For the middle integral,
	\begin{equation*}
		2^{-mh}\int_1^{2^h}x^{m-2\kappa}\,dx
		\leq
		C
		\begin{cases}
			2^{-mh},
			&m<2\kappa-1,\\
			(h+1)2^{-mh},
			&m=2\kappa-1,\\
			2^{-(2\kappa-1)h},
			&m>2\kappa-1.
		\end{cases}
	\end{equation*}
	Since \(a_n^{-1}=\rho_0^{-1}2^n\), the right-hand side of
	\eqref{eq:mlaqi-cross-sum} is bounded by
	\[
	C2^n
	\left[
	2^{-mh}
	+
	2^{-(2\kappa-1)h}
	+
	(h+1)2^{-mh}
	\boldsymbol 1_{\{m=2\kappa-1\}}
	\right].
	\]
	For every \(0<\varepsilon<\min\{m,2\kappa-1\}\), this is at most \(C_\varepsilon2^n2^{-\varepsilon h}\). Symmetry in \(n\) and \(v\) completes the proof.
\end{proof}

\section{\textbf{Proof of Proposition \ref{prop:mlaqi-multiplier-energy}}}
\label{Appendix3}
\begin{proof}
	Because \(\widehat{\vartheta}_n(0)=0\), a tensor-product summand
	indexed by \(\boldsymbol u\) can contribute only at frequencies whose
	coordinate support is \(\boldsymbol u\). Summing its multiplier over
	the admissible local levels proves \eqref{eq:mlaqi-multiplier}.
	
	If \(q=0\), then
	\(\Phi_{0,L}^{\mathrm{ml},(d)}\equiv1\), and
	\eqref{eq:mlaqi-energy} is immediate. Assume henceforth that
	\(q\geq1\).
	
	For the energy estimate, fix
	\(\boldsymbol u\subseteq[d]\) with
	\(|\boldsymbol u|=r\), and define
	\[
	\Theta_{\boldsymbol u,B}(\boldsymbol x)
	:=
	\sum_{\substack{
			\boldsymbol n\in\mathbb N_0^{\boldsymbol u}\\
			|\boldsymbol n|_1\leq B}}
	\prod_{j\in\boldsymbol u}
	\vartheta_{n_j}(x_j).
	\]
	Choose
	\(0<\varepsilon<\min\{m,2\kappa-1\}\). Tensor-product factorization
	and \eqref{eq:mlaqi-almost-orthogonality} lead to
	\begin{align*}
		\|\Theta_{\boldsymbol u,B}\|_2^2
		&\leq
		C^r
		\sum_{\substack{
				\boldsymbol n,\boldsymbol v
				\in\mathbb N_0^{\boldsymbol u}\\
				|\boldsymbol n|_1\leq B,\,
				|\boldsymbol v|_1\leq B}}
		2^{\sum_{j\in\boldsymbol u}\min\{n_j,v_j\}}
		2^{-\varepsilon|\boldsymbol n-\boldsymbol v|_1}
		\\
		&=
		C^r
		\sum_{\substack{
				\boldsymbol n,\boldsymbol v
				\in\mathbb N_0^{\boldsymbol u}\\
				|\boldsymbol n|_1\leq B,\,
				|\boldsymbol v|_1\leq B}}
		2^{\frac12(|\boldsymbol n|_1+|\boldsymbol v|_1)}
		2^{-(\varepsilon+1/2)
			|\boldsymbol n-\boldsymbol v|_1}.
	\end{align*}
	For fixed \(\boldsymbol n\), the inequality
	$2^{|\boldsymbol v|_1/2}
	\leq
	2^{|\boldsymbol n|_1/2}
	2^{|\boldsymbol n-\boldsymbol v|_1/2}$
	gives the bound
	\[
	2^{\frac12(|\boldsymbol n|_1+|\boldsymbol v|_1)}
	2^{-(\varepsilon+1/2)|\boldsymbol n-\boldsymbol v|_1}
	\leq
	2^{|\boldsymbol n|_1}
	2^{-\varepsilon|\boldsymbol n-\boldsymbol v|_1}.
	\]
	The series $\sum_{\boldsymbol h\in\mathbb Z^r}2^{-\varepsilon|\boldsymbol h|_1}=(\sum_{h\in\mathbb Z}2^{-\varepsilon|h|})^r$ converges, and its dependence on \(r\) can be included in \(C^r\).
	Summing first over \(\boldsymbol v\) yields
	\begin{align}
		\|\Theta_{\boldsymbol u,B}\|_2^2
		&\leq
		C^r
		\sum_{\substack{
				\boldsymbol n\in\mathbb N_0^{\boldsymbol u}\\
				|\boldsymbol n|_1\leq B}}
		2^{|\boldsymbol n|_1}
		=
		C^r
		\sum_{t=0}^{B}
		\binom{t+r-1}{r-1}2^t
		\leq
		C^r2^B(B+1)^{r-1}.
		\label{eq:mlaqi-component-energy}
	\end{align}
	
	Applying \eqref{eq:mlaqi-component-energy} with
	\(B=L-\lambda_{\boldsymbol u}\) gives
	\begin{align}
		\|\Phi_{q,L}^{\mathrm{ml},(d)}\|_2^2
		&\leq
		1+
		\sum_{r=1}^{\min\{q,d\}}
		\sum_{\substack{
				\boldsymbol u\subseteq[d]\\
				|\boldsymbol u|=r,\ \lambda_{\boldsymbol u}\leq L}}
		C^r2^{L-\lambda_{\boldsymbol u}}
		(L-\lambda_{\boldsymbol u}+1)^{r-1}
		\notag\\
		&\leq
		1+
		2^L
		\sum_{r=1}^{\min\{q,d\}}
		C^r(L+1)^{r-1}
		\sum_{\substack{
				\boldsymbol u\subseteq[d]\\
				|\boldsymbol u|=r}}
		2^{-\lambda_{\boldsymbol u}}.
		\label{eq:mlaqi-global-energy-transition}
	\end{align}
	Here \(\lambda_j\geq0\), since \(0<\gamma_j\leq1\).
	
	The comparison \eqref{eq:mlaqi-entry-level-comparison} and the elementary symmetric sum estimate imply
	\begin{align}
		\sum_{\substack{
				\boldsymbol u\subseteq[d]\\
				|\boldsymbol u|=r}}
		2^{-\lambda_{\boldsymbol u}}
		&\leq
		\frac{1}{r!}
		\left(
		\sum_{j=1}^d2^{-\lambda_j}
		\right)^r
		\leq
		\frac{(2G_\alpha)^r}{r!}.
		\label{eq:mlaqi-entry-level-symmetric-sum}
	\end{align}
	Combining this estimate with
	\eqref{eq:mlaqi-global-energy-transition} gives
	\[
	\|\Phi_{q,L}^{\mathrm{ml},(d)}\|_2^2
	\leq
	1+
	2^L
	\sum_{r=1}^{q}
	C^r(L+1)^{r-1}
	\frac{(2G_\alpha)^r}{r!}.
	\]
	Since \(q\) is fixed and \(1\leq r\leq q\), we have 
	$(L+1)^{r-1}
	\leq
	(L+1)^{q-1}$.
	The finite sum over \(r\), together with
	\(1\leq2^L(L+1)^{q-1}\), now leads to
	\[
	\|\Phi_{q,L}^{\mathrm{ml},(d)}\|_2^2
	\leq
	C2^L(L+1)^{q-1}.
	\]
	This completes the proof.
\end{proof}

\end{appendices}
% Alphabetical reference order is configured in the local sn-mathphys-num.bst.
\bibliography{references}

%% BioMed_Central_Bib_Style_v1.01

\begin{thebibliography}{60}
% BibTex style file: bmc-mathphys.bst (version 2.1), 2014-07-24
\ifx \bisbn   \undefined \def \bisbn  #1{ISBN #1}\fi
\ifx \binits  \undefined \def \binits#1{#1}\fi
\ifx \bauthor  \undefined \def \bauthor#1{#1}\fi
\ifx \batitle  \undefined \def \batitle#1{#1}\fi
\ifx \bjtitle  \undefined \def \bjtitle#1{#1}\fi
\ifx \bvolume  \undefined \def \bvolume#1{\textbf{#1}}\fi
\ifx \byear  \undefined \def \byear#1{#1}\fi
\ifx \bissue  \undefined \def \bissue#1{#1}\fi
\ifx \bfpage  \undefined \def \bfpage#1{#1}\fi
\ifx \blpage  \undefined \def \blpage #1{#1}\fi
\ifx \burl  \undefined \def \burl#1{\textsf{#1}}\fi
\ifx \doiurl  \undefined \def \doiurl#1{\url{https://doi.org/#1}}\fi
\ifx \betal  \undefined \def \betal{\textit{et al.}}\fi
\ifx \binstitute  \undefined \def \binstitute#1{#1}\fi
\ifx \binstitutionaled  \undefined \def \binstitutionaled#1{#1}\fi
\ifx \bctitle  \undefined \def \bctitle#1{#1}\fi
\ifx \beditor  \undefined \def \beditor#1{#1}\fi
\ifx \bpublisher  \undefined \def \bpublisher#1{#1}\fi
\ifx \bbtitle  \undefined \def \bbtitle#1{#1}\fi
\ifx \bedition  \undefined \def \bedition#1{#1}\fi
\ifx \bseriesno  \undefined \def \bseriesno#1{#1}\fi
\ifx \blocation  \undefined \def \blocation#1{#1}\fi
\ifx \bsertitle  \undefined \def \bsertitle#1{#1}\fi
\ifx \bsnm \undefined \def \bsnm#1{#1}\fi
\ifx \bsuffix \undefined \def \bsuffix#1{#1}\fi
\ifx \bparticle \undefined \def \bparticle#1{#1}\fi
\ifx \barticle \undefined \def \barticle#1{#1}\fi
\bibcommenthead
\ifx \bconfdate \undefined \def \bconfdate #1{#1}\fi
\ifx \botherref \undefined \def \botherref #1{#1}\fi
\ifx \url \undefined \def \url#1{\textsf{#1}}\fi
\ifx \bchapter \undefined \def \bchapter#1{#1}\fi
\ifx \bbook \undefined \def \bbook#1{#1}\fi
\ifx \bcomment \undefined \def \bcomment#1{#1}\fi
\ifx \oauthor \undefined \def \oauthor#1{#1}\fi
\ifx \citeauthoryear \undefined \def \citeauthoryear#1{#1}\fi
\ifx \endbibitem  \undefined \def \endbibitem {}\fi
\ifx \bconflocation  \undefined \def \bconflocation#1{#1}\fi
\ifx \arxivurl  \undefined \def \arxivurl#1{\textsf{#1}}\fi
\csname PreBibitemsHook\endcsname

%%% 1
\bibitem[\protect\citeauthoryear{Addy et~al.}{2026}]{AddyLatzTeckentrup2026}
\begin{barticle}
\bauthor{\bsnm{Addy}, \binits{E.J.}},
\bauthor{\bsnm{Latz}, \binits{J.}},
\bauthor{\bsnm{Teckentrup}, \binits{A.L.}}:
\batitle{Lengthscale-informed sparse grids for kernel methods in high
  dimensions}.
\bjtitle{Numer. Math.}
\bvolume{158},
\bfpage{799}--\blpage{846}
(\byear{2026})
\doiurl{10.1007/s00211-026-01527-5}
\end{barticle}
\endbibitem

%%% 2
\bibitem[\protect\citeauthoryear{Bungartz and
  Griebel}{2004}]{bungartz_2004Acta_sparse}
\begin{barticle}
\bauthor{\bsnm{Bungartz}, \binits{H.-J.}},
\bauthor{\bsnm{Griebel}, \binits{M.}}:
\batitle{Sparse grids}.
\bjtitle{Acta Numer.}
\bvolume{13},
\bfpage{147}--\blpage{269}
(\byear{2004})
\doiurl{10.1017/S0962492904000182}
\end{barticle}
\endbibitem

%%% 3
\bibitem[\protect\citeauthoryear{Byrenheid
  et~al.}{2017}]{byrenheid_2017NM_tight}
\begin{barticle}
\bauthor{\bsnm{Byrenheid}, \binits{G.}},
\bauthor{\bsnm{K{\"a}mmerer}, \binits{L.}},
\bauthor{\bsnm{Ullrich}, \binits{T.}},
\bauthor{\bsnm{Volkmer}, \binits{T.}}:
\batitle{Tight error bounds for rank-1 lattice sampling in spaces of hybrid
  mixed smoothness}.
\bjtitle{Numer. Math.}
\bvolume{136}(\bissue{4}),
\bfpage{993}--\blpage{1034}
(\byear{2017})
\doiurl{10.1007/s00211-016-0861-7}
\end{barticle}
\endbibitem

%%% 4
\bibitem[\protect\citeauthoryear{Cai and Goda}{2026}]{CaiGoda2026}
\begin{barticle}
\bauthor{\bsnm{Cai}, \binits{M.}},
\bauthor{\bsnm{Goda}, \binits{T.}}:
\batitle{A note on approximation in weighted {K}orobov spaces via multiple
  rank-1 lattices}.
\bjtitle{J. Complexity}
\bvolume{96},
\bfpage{102054}
(\byear{2026})
\doiurl{10.1016/j.jco.2026.102054}
\end{barticle}
\endbibitem

%%% 5
\bibitem[\protect\citeauthoryear{Cai et~al.}{2026}]{CaiGodaKazashi2026}
\begin{barticle}
\bauthor{\bsnm{Cai}, \binits{M.}},
\bauthor{\bsnm{Goda}, \binits{T.}},
\bauthor{\bsnm{Kazashi}, \binits{Y.}}:
\batitle{{$L_2$}-approximation using randomized lattice algorithms}.
\bjtitle{J. Approx. Theory}
\bvolume{318},
\bfpage{106301}
(\byear{2026})
\doiurl{10.1016/j.jat.2026.106301}
\end{barticle}
\endbibitem

%%% 6
\bibitem[\protect\citeauthoryear{Cheney and Lei}{1994}]{CheneyLei1994}
\begin{bchapter}
\bauthor{\bsnm{Cheney}, \binits{E.W.}},
\bauthor{\bsnm{Lei}, \binits{J.}}:
\bctitle{Quasi-interpolation on irregular points}.
In: \beditor{\bsnm{Zahar}, \binits{R.V.M.}} (ed.)
\bbtitle{Approximation and Computation: A Festschrift in Honor of Walter
  Gautschi}.
\bsertitle{International Series of Numerical Mathematics},
vol. \bseriesno{119},
pp. \bfpage{121}--\blpage{135}.
\bpublisher{Birkh{\"a}user},
\blocation{Boston}
(\byear{1994}).
\doiurl{10.1007/978-1-4684-7415-2_8}
\end{bchapter}
\endbibitem

%%% 7
\bibitem[\protect\citeauthoryear{Cheney et~al.}{1992}]{CheneyLightXu1992}
\begin{bchapter}
\bauthor{\bsnm{Cheney}, \binits{E.W.}},
\bauthor{\bsnm{Light}, \binits{W.A.}},
\bauthor{\bsnm{Xu}, \binits{Y.}}:
\bctitle{On kernels and approximation orders}.
In: \beditor{\bsnm{Anastassiou}, \binits{G.A.}} (ed.)
\bbtitle{Approximation Theory}.
\bsertitle{Lecture Notes in Pure and Applied Mathematics},
vol. \bseriesno{138},
pp. \bfpage{227}--\blpage{242}.
\bpublisher{Marcel Dekker},
\blocation{New York}
(\byear{1992})
\end{bchapter}
\endbibitem

%%% 8
\bibitem[\protect\citeauthoryear{Cohen and
  DeVore}{2015}]{cohen_2015ActaNumer_approximation}
\begin{barticle}
\bauthor{\bsnm{Cohen}, \binits{A.}},
\bauthor{\bsnm{DeVore}, \binits{R.}}:
\batitle{Approximation of high-dimensional parametric {PDE}s}.
\bjtitle{Acta Numer.}
\bvolume{24},
\bfpage{1}--\blpage{159}
(\byear{2015})
\doiurl{10.1017/S0962492915000033}
\end{barticle}
\endbibitem

%%% 9
\bibitem[\protect\citeauthoryear{Cools et~al.}{2020}]{CoolsKuoNuyensSloan2020}
\begin{bchapter}
\bauthor{\bsnm{Cools}, \binits{R.}},
\bauthor{\bsnm{Kuo}, \binits{F.Y.}},
\bauthor{\bsnm{Nuyens}, \binits{D.}},
\bauthor{\bsnm{Sloan}, \binits{I.H.}}:
\bctitle{Lattice algorithms for multivariate approximation in periodic spaces
  with general weight parameters}.
In: \beditor{\bsnm{Brenner}, \binits{S.C.}},
\beditor{\bsnm{Shparlinski}, \binits{I.}},
\beditor{\bsnm{Shu}, \binits{C.-W.}},
\beditor{\bsnm{Szyld}, \binits{D.B.}} (eds.)
\bbtitle{75 Years of Mathematics of Computation}.
\bsertitle{Contemporary Mathematics},
vol. \bseriesno{754},
pp. \bfpage{93}--\blpage{113}.
\bpublisher{American Mathematical Society}
(\byear{2020}).
\doiurl{10.1090/conm/754/15150}
\end{bchapter}
\endbibitem

%%% 10
\bibitem[\protect\citeauthoryear{Cools et~al.}{2021}]{CoolsKuoNuyensSloan2021}
\begin{barticle}
\bauthor{\bsnm{Cools}, \binits{R.}},
\bauthor{\bsnm{Kuo}, \binits{F.Y.}},
\bauthor{\bsnm{Nuyens}, \binits{D.}},
\bauthor{\bsnm{Sloan}, \binits{I.H.}}:
\batitle{Fast component-by-component construction of lattice algorithms for
  multivariate approximation with {POD} and {SPOD} weights}.
\bjtitle{Math. Comput.}
\bvolume{90}(\bissue{328}),
\bfpage{787}--\blpage{812}
(\byear{2021})
\doiurl{10.1090/mcom/3586}
\end{barticle}
\endbibitem

%%% 11
\bibitem[\protect\citeauthoryear{de~Boor and Fix}{1973}]{deBoorFix1973}
\begin{barticle}
\bauthor{\bsnm{Boor}, \binits{C.}},
\bauthor{\bsnm{Fix}, \binits{G.J.}}:
\batitle{Spline approximation by quasiinterpolants}.
\bjtitle{J. Approx. Theory}
\bvolume{8},
\bfpage{19}--\blpage{45}
(\byear{1973})
\doiurl{10.1016/0021-9045(73)90029-4}
\end{barticle}
\endbibitem

%%% 12
\bibitem[\protect\citeauthoryear{Dick et~al.}{2022}]{dick_book2022}
\begin{bbook}
\bauthor{\bsnm{Dick}, \binits{J.}},
\bauthor{\bsnm{Kritzer}, \binits{P.}},
\bauthor{\bsnm{Pillichshammer}, \binits{F.}}:
\bbtitle{Lattice Rules: Numerical Integration, Approximation, and Discrepancy}.
\bsertitle{Springer Series in Computational Mathematics},
vol. \bseriesno{58}.
\bpublisher{Springer},
\blocation{Cham}
(\byear{2022}).
\doiurl{10.1007/978-3-031-09951-9}
\end{bbook}
\endbibitem

%%% 13
\bibitem[\protect\citeauthoryear{D{\~u}ng}{2016}]{Dung2016}
\begin{barticle}
\bauthor{\bsnm{D{\~u}ng}, \binits{D.}}:
\batitle{Sampling and cubature on sparse grids based on a {B}-spline
  quasi-interpolation}.
\bjtitle{Found. Comput. Math.}
\bvolume{16}(\bissue{5}),
\bfpage{1193}--\blpage{1240}
(\byear{2016})
\doiurl{10.1007/s10208-015-9274-8}
\end{barticle}
\endbibitem

%%% 14
\bibitem[\protect\citeauthoryear{D{\~u}ng and
  Micchelli}{2013}]{DungMicchelli2013}
\begin{barticle}
\bauthor{\bsnm{D{\~u}ng}, \binits{D.}},
\bauthor{\bsnm{Micchelli}, \binits{C.A.}}:
\batitle{Multivariate approximation by translates of the {K}orobov function on
  {S}molyak grids}.
\bjtitle{J. Complexity}
\bvolume{29}(\bissue{6}),
\bfpage{424}--\blpage{437}
(\byear{2013})
\doiurl{10.1016/j.jco.2013.06.002}
\end{barticle}
\endbibitem

%%% 15
\bibitem[\protect\citeauthoryear{D{\~u}ng et~al.}{2018}]{dung_book2018}
\begin{bbook}
\bauthor{\bsnm{D{\~u}ng}, \binits{D.}},
\bauthor{\bsnm{Temlyakov}, \binits{V.}},
\bauthor{\bsnm{Ullrich}, \binits{T.}}:
\bbtitle{Hyperbolic Cross Approximation}.
\bsertitle{Advanced Courses in Mathematics-CRM Barcelona}.
\bpublisher{Birkh{\"a}user},
\blocation{Switzerland}
(\byear{2018}).
\doiurl{10.1007/978-3-319-92240-9}
\end{bbook}
\endbibitem

%%% 16
\bibitem[\protect\citeauthoryear{Durrande et~al.}{2013}]{DurrandeEtAl2013}
\begin{barticle}
\bauthor{\bsnm{Durrande}, \binits{N.}},
\bauthor{\bsnm{Ginsbourger}, \binits{D.}},
\bauthor{\bsnm{Roustant}, \binits{O.}},
\bauthor{\bsnm{Carraro}, \binits{L.}}:
\batitle{{ANOVA} kernels and {RKHS} of zero mean functions for model-based
  sensitivity analysis}.
\bjtitle{J. Multivariate Anal.}
\bvolume{115},
\bfpage{57}--\blpage{67}
(\byear{2013})
\doiurl{10.1016/j.jmva.2012.08.016}
\end{barticle}
\endbibitem

%%% 17
\bibitem[\protect\citeauthoryear{Dyn et~al.}{1992}]{DynJacksonLevinRon1992}
\begin{barticle}
\bauthor{\bsnm{Dyn}, \binits{N.}},
\bauthor{\bsnm{Jackson}, \binits{I.R.H.}},
\bauthor{\bsnm{Levin}, \binits{D.}},
\bauthor{\bsnm{Ron}, \binits{A.}}:
\batitle{On multivariate approximation by integer translates of a basis
  function}.
\bjtitle{Israel J. Math.}
\bvolume{78},
\bfpage{95}--\blpage{130}
(\byear{1992})
\doiurl{10.1007/BF02801574}
\end{barticle}
\endbibitem

%%% 18
\bibitem[\protect\citeauthoryear{Ebert and
  Pillichshammer}{2021}]{EbertPillichshammer2021}
\begin{barticle}
\bauthor{\bsnm{Ebert}, \binits{A.}},
\bauthor{\bsnm{Pillichshammer}, \binits{F.}}:
\batitle{Tractability of approximation in the weighted {K}orobov space in the
  worst-case setting---a complete picture}.
\bjtitle{J. Complexity}
\bvolume{67},
\bfpage{101571}
(\byear{2021})
\doiurl{10.1016/j.jco.2021.101571}
\end{barticle}
\endbibitem

%%% 19
\bibitem[\protect\citeauthoryear{Franz and Wendland}{2023}]{FranzWendland2023}
\begin{barticle}
\bauthor{\bsnm{Franz}, \binits{T.}},
\bauthor{\bsnm{Wendland}, \binits{H.}}:
\batitle{Multilevel quasi-interpolation}.
\bjtitle{IMA J. Numer. Anal.}
\bvolume{43}(\bissue{5}),
\bfpage{2934}--\blpage{2964}
(\byear{2023})
\doiurl{10.1093/imanum/drac059}
\end{barticle}
\endbibitem

%%% 20
\bibitem[\protect\citeauthoryear{Gao et~al.}{2020}]{GaoFasshauerSunZhou2020}
\begin{barticle}
\bauthor{\bsnm{Gao}, \binits{W.W.}},
\bauthor{\bsnm{Fasshauer}, \binits{G.E.}},
\bauthor{\bsnm{Sun}, \binits{X.P.}},
\bauthor{\bsnm{Zhou}, \binits{X.}}:
\batitle{Optimality and regularization properties of quasi-interpolation:
  deterministic and stochastic approaches}.
\bjtitle{SIAM J. Numer. Anal.}
\bvolume{58}(\bissue{4}),
\bfpage{2059}--\blpage{2078}
(\byear{2020})
\doiurl{10.1137/19M1266496}
\end{barticle}
\endbibitem

%%% 21
\bibitem[\protect\citeauthoryear{Gao et~al.}{2026}]{GaoHuSunWang2026}
\begin{barticle}
\bauthor{\bsnm{Gao}, \binits{W.W.}},
\bauthor{\bsnm{Hu}, \binits{L.}},
\bauthor{\bsnm{Sun}, \binits{Z.J.}},
\bauthor{\bsnm{Wang}, \binits{C.W.}}:
\batitle{Quasi-interpolation using generalized {G}aussian kernels}.
\bjtitle{J. Comput. Appl. Math.}
\bvolume{483},
\bfpage{117438}
(\byear{2026})
\doiurl{10.1016/j.cam.2026.117438}
\end{barticle}
\endbibitem

%%% 22
\bibitem[\protect\citeauthoryear{Gao et~al.}{2024}]{GaoWangSunFasshauer2024}
\begin{barticle}
\bauthor{\bsnm{Gao}, \binits{W.W.}},
\bauthor{\bsnm{Wang}, \binits{J.C.}},
\bauthor{\bsnm{Sun}, \binits{Z.J.}},
\bauthor{\bsnm{Fasshauer}, \binits{G.E.}}:
\batitle{Quasi-interpolation for high-dimensional function approximation}.
\bjtitle{Numer. Math.}
\bvolume{156}(\bissue{5}),
\bfpage{1855}--\blpage{1885}
(\byear{2024})
\doiurl{10.1007/s00211-024-01435-6}
\end{barticle}
\endbibitem

%%% 23
\bibitem[\protect\citeauthoryear{Gao and Zhou}{2020}]{GaoZhou2020}
\begin{barticle}
\bauthor{\bsnm{Gao}, \binits{W.W.}},
\bauthor{\bsnm{Zhou}, \binits{X.}}:
\batitle{Multiscale radial kernels with high-order generalized {S}trang--{F}ix
  conditions}.
\bjtitle{Numer. Algorithms}
\bvolume{85}(\bissue{2}),
\bfpage{427}--\blpage{448}
(\byear{2020})
\doiurl{10.1007/s11075-019-00820-9}
\end{barticle}
\endbibitem

%%% 24
\bibitem[\protect\citeauthoryear{Gerstner and
  Griebel}{1998}]{gerstner_1998NumerAlgor_numerical}
\begin{barticle}
\bauthor{\bsnm{Gerstner}, \binits{T.}},
\bauthor{\bsnm{Griebel}, \binits{M.}}:
\batitle{Numerical integration using sparse grids}.
\bjtitle{Numer. Algorithms}
\bvolume{18}(\bissue{3}),
\bfpage{209}--\blpage{232}
(\byear{1998})
\doiurl{10.1023/A:1019129717644}
\end{barticle}
\endbibitem

%%% 25
\bibitem[\protect\citeauthoryear{Griebel and
  Hamaekers}{2014}]{GriebelHamaekers2014}
\begin{bchapter}
\bauthor{\bsnm{Griebel}, \binits{M.}},
\bauthor{\bsnm{Hamaekers}, \binits{J.}}:
\bctitle{Fast discrete fourier transform on generalized sparse grids}.
In: \beditor{\bsnm{Garcke}, \binits{J.}},
\beditor{\bsnm{Pfl{\"u}ger}, \binits{D.}} (eds.)
\bbtitle{Sparse Grids and Applications-Munich 2012}.
\bsertitle{Lecture Notes in Computational Science and Engineering},
vol. \bseriesno{97},
pp. \bfpage{75}--\blpage{107}.
\bpublisher{Springer},
\blocation{Cham}
(\byear{2014}).
\doiurl{10.1007/978-3-319-04537-5_4} .
\burl{https://doi.org/10.1007/978-3-319-04537-5_4}
\end{bchapter}
\endbibitem

%%% 26
\bibitem[\protect\citeauthoryear{Griebel and
  Knapek}{2009}]{griebel_2009MCoM_optimized}
\begin{barticle}
\bauthor{\bsnm{Griebel}, \binits{M.}},
\bauthor{\bsnm{Knapek}, \binits{S.}}:
\batitle{Optimized general sparse grid approximation spaces for operator
  equations}.
\bjtitle{Math. Comput.}
\bvolume{78}(\bissue{268}),
\bfpage{2223}--\blpage{2257}
(\byear{2009})
\doiurl{10.1090/S0025-5718-09-02248-0}
\end{barticle}
\endbibitem

%%% 27
\bibitem[\protect\citeauthoryear{Griebel
  et~al.}{1992}]{GriebelSchneiderZenger1992}
\begin{bchapter}
\bauthor{\bsnm{Griebel}, \binits{M.}},
\bauthor{\bsnm{Schneider}, \binits{M.}},
\bauthor{\bsnm{Zenger}, \binits{C.}}:
\bctitle{A combination technique for the solution of sparse grid problems}.
In: \beditor{\bsnm{Groen}, \binits{P.}},
\beditor{\bsnm{Beauwens}, \binits{R.}} (eds.)
\bbtitle{Iterative Methods in Linear Algebra},
pp. \bfpage{263}--\blpage{281}.
\bpublisher{Elsevier},
\blocation{Amsterdam}
(\byear{1992})
\end{bchapter}
\endbibitem

%%% 28
\bibitem[\protect\citeauthoryear{Hubbert
  et~al.}{2023}]{HubbertJagerLevesley2023}
\begin{barticle}
\bauthor{\bsnm{Hubbert}, \binits{S.}},
\bauthor{\bsnm{J{\"a}ger}, \binits{J.}},
\bauthor{\bsnm{Levesley}, \binits{J.}}:
\batitle{Convergence of sparse grid {G}aussian convolution approximation for
  multi-dimensional periodic functions}.
\bjtitle{Appl. Comput. Harmon. Anal.}
\bvolume{62},
\bfpage{453}--\blpage{474}
(\byear{2023})
\doiurl{10.1016/j.acha.2022.10.005}
\end{barticle}
\endbibitem

%%% 29
\bibitem[\protect\citeauthoryear{Jeong et~al.}{2021}]{JeongKerseyYoon2021}
\begin{barticle}
\bauthor{\bsnm{Jeong}, \binits{B.}},
\bauthor{\bsnm{Kersey}, \binits{S.N.}},
\bauthor{\bsnm{Yoon}, \binits{J.}}:
\batitle{Approximation of multivariate functions on sparse grids by
  kernel-based quasi-interpolation}.
\bjtitle{SIAM J. Sci. Comput.}
\bvolume{43}(\bissue{2}),
\bfpage{953}--\blpage{979}
(\byear{2021})
\doiurl{10.1137/20M1318055}
\end{barticle}
\endbibitem

%%% 30
\bibitem[\protect\citeauthoryear{Joe and Kuo}{2003}]{JoeKuo2003}
\begin{barticle}
\bauthor{\bsnm{Joe}, \binits{S.}},
\bauthor{\bsnm{Kuo}, \binits{F.Y.}}:
\batitle{Remark on algorithm 659: Implementing {Sobol}'s quasirandom sequence
  generator}.
\bjtitle{ACM Trans. Math. Software}
\bvolume{29}(\bissue{1}),
\bfpage{49}--\blpage{57}
(\byear{2003})
\doiurl{10.1145/641876.641879}
\end{barticle}
\endbibitem

%%% 31
\bibitem[\protect\citeauthoryear{Kaarnioja et~al.}{2022}]{KaarniojaEtAl2022}
\begin{barticle}
\bauthor{\bsnm{Kaarnioja}, \binits{V.}},
\bauthor{\bsnm{Kazashi}, \binits{Y.}},
\bauthor{\bsnm{Kuo}, \binits{F.Y.}},
\bauthor{\bsnm{Nobile}, \binits{F.}},
\bauthor{\bsnm{Sloan}, \binits{I.H.}}:
\batitle{Fast approximation by periodic kernel-based lattice-point
  interpolation with application in uncertainty quantification}.
\bjtitle{Numer. Math.}
\bvolume{150},
\bfpage{33}--\blpage{77}
(\byear{2022})
\doiurl{10.1007/s00211-021-01242-3}
\end{barticle}
\endbibitem

%%% 32
\bibitem[\protect\citeauthoryear{K{\"a}mmerer}{2013}]{Kammerer2013}
\begin{barticle}
\bauthor{\bsnm{K{\"a}mmerer}, \binits{L.}}:
\batitle{Reconstructing hyperbolic cross trigonometric polynomials by sampling
  along rank-1 lattices}.
\bjtitle{SIAM J. Numer. Anal.}
\bvolume{51}(\bissue{5}),
\bfpage{2773}--\blpage{2796}
(\byear{2013})
\doiurl{10.1137/120871183}
\end{barticle}
\endbibitem

%%% 33
\bibitem[\protect\citeauthoryear{K{\"a}mmerer
  et~al.}{2015}]{KammererPottsVolkmer2015}
\begin{barticle}
\bauthor{\bsnm{K{\"a}mmerer}, \binits{L.}},
\bauthor{\bsnm{Potts}, \binits{D.}},
\bauthor{\bsnm{Volkmer}, \binits{T.}}:
\batitle{Approximation of multivariate periodic functions by trigonometric
  polynomials based on rank-1 lattice sampling}.
\bjtitle{J. Complexity}
\bvolume{31}(\bissue{4}),
\bfpage{543}--\blpage{576}
(\byear{2015})
\doiurl{10.1016/j.jco.2015.02.004}
\end{barticle}
\endbibitem

%%% 34
\bibitem[\protect\citeauthoryear{K{\"a}mmerer and
  Volkmer}{2019}]{kammerer_2019JAT_approximation}
\begin{barticle}
\bauthor{\bsnm{K{\"a}mmerer}, \binits{L.}},
\bauthor{\bsnm{Volkmer}, \binits{T.}}:
\batitle{Approximation of multivariate periodic functions based on sampling
  along multiple rank-1 lattices}.
\bjtitle{J. Approx. Theory}
\bvolume{246},
\bfpage{1}--\blpage{27}
(\byear{2019})
\doiurl{10.1016/j.jat.2019.05.001}
\end{barticle}
\endbibitem

%%% 35
\bibitem[\protect\citeauthoryear{Kempf and Wendland}{2023}]{KempfWendland2023}
\begin{barticle}
\bauthor{\bsnm{Kempf}, \binits{R.}},
\bauthor{\bsnm{Wendland}, \binits{H.}}:
\batitle{High-dimensional approximation with kernel-based multilevel methods on
  sparse grids}.
\bjtitle{Numer. Math.}
\bvolume{154},
\bfpage{485}--\blpage{519}
(\byear{2023})
\doiurl{10.1007/s00211-023-01363-x}
\end{barticle}
\endbibitem

%%% 36
\bibitem[\protect\citeauthoryear{Kolomoitsev}{2022}]{Kolomoitsev2022}
\begin{barticle}
\bauthor{\bsnm{Kolomoitsev}, \binits{Y.}}:
\batitle{Approximation by quasi-interpolation operators and {S}molyak's
  algorithm}.
\bjtitle{J. Complexity}
\bvolume{69},
\bfpage{101601}
(\byear{2022})
\doiurl{10.1016/j.jco.2021.101601}
\end{barticle}
\endbibitem

%%% 37
\bibitem[\protect\citeauthoryear{Kolomoitsev
  et~al.}{2023}]{KolomoitsevLomakoTikhonov2023}
\begin{barticle}
\bauthor{\bsnm{Kolomoitsev}, \binits{Y.}},
\bauthor{\bsnm{Lomako}, \binits{T.}},
\bauthor{\bsnm{Tikhonov}, \binits{S.}}:
\batitle{Sparse grid approximation in weighted wiener spaces}.
\bjtitle{J. Fourier Anal. Appl.}
\bvolume{29}(\bissue{2}),
\bfpage{19}
(\byear{2023})
\doiurl{10.1007/s00041-023-09994-2}
\end{barticle}
\endbibitem

%%% 38
\bibitem[\protect\citeauthoryear{Kolomoitsev and
  Prestin}{2021}]{KolomoitsevPrestin2021}
\begin{barticle}
\bauthor{\bsnm{Kolomoitsev}, \binits{Y.}},
\bauthor{\bsnm{Prestin}, \binits{J.}}:
\batitle{Approximation properties of periodic multivariate quasi-interpolation
  operators}.
\bjtitle{J. Approx. Theory}
\bvolume{270},
\bfpage{105631}
(\byear{2021})
\doiurl{10.1016/j.jat.2021.105631}
\end{barticle}
\endbibitem

%%% 39
\bibitem[\protect\citeauthoryear{K{\"u}hne et~al.}{2020}]{kuhne_2020JCP_cp2k}
\begin{botherref}
\oauthor{\bsnm{K{\"u}hne}, \binits{T.D.}},
\oauthor{\bsnm{Iannuzzi}, \binits{M.}},
\oauthor{\bsnm{Del~Ben}, \binits{M.}},
\oauthor{\bsnm{Rybkin}, \binits{V.V.}},
\oauthor{\bsnm{Seewald}, \binits{P.}},
\oauthor{\bsnm{Stein}, \binits{F.}},
\oauthor{\bsnm{Laino}, \binits{T.}},
\oauthor{\bsnm{Khaliullin}, \binits{R.Z.}},
\oauthor{\bsnm{Sch{\"u}tt}, \binits{O.}},
\oauthor{\bsnm{Schiffmann}, \binits{F.}}, et al.:
Cp2k: An electronic structure and molecular dynamics software
  package-{Q}uickstep: Efficient and accurate electronic structure
  calculations.
J. Chem. Phys.
\textbf{152}(19)
(2020)
\doiurl{10.1063/5.0007045}
\end{botherref}
\endbibitem

%%% 40
\bibitem[\protect\citeauthoryear{Kuo}{2003}]{Kuo2003}
\begin{barticle}
\bauthor{\bsnm{Kuo}, \binits{F.Y.}}:
\batitle{Component-by-component constructions achieve the optimal rate of
  convergence for multivariate integration in weighted korobov and sobolev
  spaces}.
\bjtitle{J. Complexity}
\bvolume{19}(\bissue{3}),
\bfpage{301}--\blpage{320}
(\byear{2003})
\doiurl{10.1016/S0885-064X(03)00006-2}
\end{barticle}
\endbibitem

%%% 41
\bibitem[\protect\citeauthoryear{Kuo et~al.}{2021}]{kuo_2021MCoM_function}
\begin{barticle}
\bauthor{\bsnm{Kuo}, \binits{F.Y.}},
\bauthor{\bsnm{Migliorati}, \binits{G.}},
\bauthor{\bsnm{Nobile}, \binits{F.}},
\bauthor{\bsnm{Nuyens}, \binits{D.}}:
\batitle{Function integration, reconstruction and approximation using rank-1
  lattices}.
\bjtitle{Math. Comput.}
\bvolume{90}(\bissue{330}),
\bfpage{1861}--\blpage{1897}
(\byear{2021})
\doiurl{10.1090/mcom/3595}
\end{barticle}
\endbibitem

%%% 42
\bibitem[\protect\citeauthoryear{Kuo et~al.}{2006}]{KuoSloanWozniakowski2006}
\begin{bchapter}
\bauthor{\bsnm{Kuo}, \binits{F.Y.}},
\bauthor{\bsnm{Sloan}, \binits{I.H.}},
\bauthor{\bsnm{Wo{\'z}niakowski}, \binits{H.}}:
\bctitle{Lattice rules for multivariate approximation in the worst case
  setting}.
In: \beditor{\bsnm{Niederreiter}, \binits{H.}},
\beditor{\bsnm{Talay}, \binits{D.}} (eds.)
\bbtitle{Monte Carlo and Quasi-Monte Carlo Methods 2004},
pp. \bfpage{289}--\blpage{330}.
\bpublisher{Springer},
\blocation{Berlin}
(\byear{2006}).
\doiurl{10.1007/3-540-31186-6_18}
\end{bchapter}
\endbibitem

%%% 43
\bibitem[\protect\citeauthoryear{Lippert
  et~al.}{2023}]{LippertPottsUllrich2023}
\begin{barticle}
\bauthor{\bsnm{Lippert}, \binits{L.}},
\bauthor{\bsnm{Potts}, \binits{D.}},
\bauthor{\bsnm{Ullrich}, \binits{T.}}:
\batitle{Fast hyperbolic wavelet regression meets {ANOVA}}.
\bjtitle{Numer. Math.}
\bvolume{154},
\bfpage{155}--\blpage{207}
(\byear{2023})
\doiurl{10.1007/s00211-023-01358-8}
\end{barticle}
\endbibitem

%%% 44
\bibitem[\protect\citeauthoryear{McDiarmid}{1989}]{McDiarmid1989}
\begin{bchapter}
\bauthor{\bsnm{McDiarmid}, \binits{C.}}:
\bctitle{On the method of bounded differences}.
In: \beditor{\bsnm{Siemons}, \binits{J.}} (ed.)
\bbtitle{Surveys in Combinatorics, 1989}.
\bsertitle{London Mathematical Society Lecture Note Series},
vol. \bseriesno{141},
pp. \bfpage{148}--\blpage{188}.
\bpublisher{Cambridge University Press}
(\byear{1989}).
\doiurl{10.1017/CBO9781107359949.008}
\end{bchapter}
\endbibitem

%%% 45
\bibitem[\protect\citeauthoryear{Narcowich et~al.}{2007}]{NarcowichSunWard2007}
\begin{barticle}
\bauthor{\bsnm{Narcowich}, \binits{F.J.}},
\bauthor{\bsnm{Sun}, \binits{X.P.}},
\bauthor{\bsnm{Ward}, \binits{J.D.}}:
\batitle{Approximation power of {RBF}s and their associated {SBF}s: A
  connection}.
\bjtitle{Adv. Comput. Math.}
\bvolume{27}(\bissue{1}),
\bfpage{107}--\blpage{124}
(\byear{2007})
\doiurl{10.1007/s10444-005-7506-1}
\end{barticle}
\endbibitem

%%% 46
\bibitem[\protect\citeauthoryear{Novak
  et~al.}{2004}]{NovakSloanWozniakowski2004}
\begin{barticle}
\bauthor{\bsnm{Novak}, \binits{E.}},
\bauthor{\bsnm{Sloan}, \binits{I.H.}},
\bauthor{\bsnm{Wo{\'z}niakowski}, \binits{H.}}:
\batitle{Tractability of approximation for weighted {K}orobov spaces on
  classical and quantum computers}.
\bjtitle{Found. Comput. Math.}
\bvolume{4}(\bissue{2}),
\bfpage{121}--\blpage{156}
(\byear{2004})
\doiurl{10.1007/s10208-002-0074-6}
\end{barticle}
\endbibitem

%%% 47
\bibitem[\protect\citeauthoryear{Nuyens and Cools}{2006}]{NuyensCools2006}
\begin{barticle}
\bauthor{\bsnm{Nuyens}, \binits{D.}},
\bauthor{\bsnm{Cools}, \binits{R.}}:
\batitle{Fast algorithms for component-by-component construction of rank-1
  lattice rules in shift-invariant reproducing kernel hilbert spaces}.
\bjtitle{Math. Comput.}
\bvolume{75}(\bissue{254}),
\bfpage{903}--\blpage{920}
(\byear{2006})
\doiurl{10.1090/S0025-5718-06-01785-6}
\end{barticle}
\endbibitem

%%% 48
\bibitem[\protect\citeauthoryear{Potts and
  Schmischke}{2021}]{PottsSchmischke2021}
\begin{barticle}
\bauthor{\bsnm{Potts}, \binits{D.}},
\bauthor{\bsnm{Schmischke}, \binits{M.}}:
\batitle{Approximation of high-dimensional periodic functions with
  {F}ourier-based methods}.
\bjtitle{SIAM J. Numer. Anal.}
\bvolume{59}(\bissue{5}),
\bfpage{2393}--\blpage{2429}
(\byear{2021})
\doiurl{10.1137/20M1354921}
\end{barticle}
\endbibitem

%%% 49
\bibitem[\protect\citeauthoryear{Rieger and
  Wendland}{2024}]{rieger_2024SINUM_approximability}
\begin{barticle}
\bauthor{\bsnm{Rieger}, \binits{C.}},
\bauthor{\bsnm{Wendland}, \binits{H.}}:
\batitle{On the approximability and curse of dimensionality of certain classes
  of high-dimensional functions}.
\bjtitle{SIAM J. Numer. Analy.}
\bvolume{62}(\bissue{2}),
\bfpage{842}--\blpage{871}
(\byear{2024})
\doiurl{10.1137/22M1525193}
\end{barticle}
\endbibitem

%%% 50
\bibitem[\protect\citeauthoryear{Rieger and
  Wendland}{2026}]{RiegerWendland2026}
\begin{barticle}
\bauthor{\bsnm{Rieger}, \binits{C.}},
\bauthor{\bsnm{Wendland}, \binits{H.}}:
\batitle{Constructive approximation of high-dimensional functions with small
  efficient dimension with applications in uncertainty quantification}.
\bjtitle{SIAM J. Numer. Anal.}
\bvolume{64}(\bissue{1}),
\bfpage{1}--\blpage{28}
(\byear{2026})
\doiurl{10.1137/24M171231X}
\end{barticle}
\endbibitem

%%% 51
\bibitem[\protect\citeauthoryear{Smith}{2013}]{smith_Book_2013}
\begin{bbook}
\bauthor{\bsnm{Smith}, \binits{R.C.}}:
\bbtitle{Uncertainty Quantification: Theory, Implementation, and Applications}.
\bsertitle{Computational Science and Engineering},
vol. \bseriesno{12}.
\bpublisher{SIAM},
\blocation{Philadelphia, PA}
(\byear{2013}).
\doiurl{10.1137/1.9781611973228}
\end{bbook}
\endbibitem

%%% 52
\bibitem[\protect\citeauthoryear{Sobol}{1967}]{Sobol1967}
\begin{barticle}
\bauthor{\bsnm{Sobol}, \binits{I.M.}}:
\batitle{On the distribution of points in a cube and the approximate evaluation
  of integrals}.
\bjtitle{USSR Comput. Math. Math. Phys.}
\bvolume{7}(\bissue{4}),
\bfpage{86}--\blpage{112}
(\byear{1967})
\doiurl{10.1016/0041-5553(67)90144-9}
\end{barticle}
\endbibitem

%%% 53
\bibitem[\protect\citeauthoryear{Sun et~al.}{2025}]{sun_2025IMAJNA_scaled}
\begin{botherref}
\oauthor{\bsnm{Sun}, \binits{Z.J.}},
\oauthor{\bsnm{Gao}, \binits{W.W.}},
\oauthor{\bsnm{Sun}, \binits{X.P.}}:
Scaled zonal kernel quasi-interpolation on spheres.
IMA J. Numer. Anal.
\textbf{draf104}
(2025)
\doiurl{10.1093/imanum/draf104}
\end{botherref}
\endbibitem

%%% 54
\bibitem[\protect\citeauthoryear{Sun et~al.}{2026}]{sun_2026SINUM_monte}
\begin{barticle}
\bauthor{\bsnm{Sun}, \binits{Z.J.}},
\bauthor{\bsnm{Lv}, \binits{M.Y.}},
\bauthor{\bsnm{Sun}, \binits{X.P.}}:
\batitle{Monte carlo quasi-interpolation of spherical data}.
\bjtitle{SIAM J. Numer. Anal.}
\bvolume{64}(\bissue{5}),
\bfpage{2005}--\blpage{2027}
(\byear{2026})
\doiurl{10.1137/25M1810490}
\end{barticle}
\endbibitem

%%% 55
\bibitem[\protect\citeauthoryear{Usta and Levesley}{2018}]{UstaLevesley2018}
\begin{barticle}
\bauthor{\bsnm{Usta}, \binits{F.}},
\bauthor{\bsnm{Levesley}, \binits{J.}}:
\batitle{Multilevel quasi-interpolation on a sparse grid with the {Gaussian}}.
\bjtitle{Numer. Algorithms}
\bvolume{77}(\bissue{3}),
\bfpage{793}--\blpage{808}
(\byear{2018})
\doiurl{10.1007/s11075-017-0340-y}
\end{barticle}
\endbibitem

%%% 56
\bibitem[\protect\citeauthoryear{Wang
  et~al.}{2022}]{wang_2022NatComputSci_efficient}
\begin{barticle}
\bauthor{\bsnm{Wang}, \binits{D.D.}},
\bauthor{\bsnm{Wang}, \binits{Y.Z.}},
\bauthor{\bsnm{Chang}, \binits{J.H.}},
\bauthor{\bsnm{Zhang}, \binits{L.F.}},
\bauthor{\bsnm{Wang}, \binits{H.}},
\bauthor{\bsnm{E}, \binits{W.N.}}:
\batitle{Efficient sampling of high-dimensional free energy landscapes using
  adaptive reinforced dynamics}.
\bjtitle{Nat. Comput. Sci.}
\bvolume{2}(\bissue{1}),
\bfpage{20}--\blpage{29}
(\byear{2022})
\doiurl{10.1038/s43588-021-00173-1}
\end{barticle}
\endbibitem

%%% 57
\bibitem[\protect\citeauthoryear{Wasilkowski and
  Wo{\'z}niakowski}{1999}]{WasilkowskiWozniakowski1999}
\begin{barticle}
\bauthor{\bsnm{Wasilkowski}, \binits{G.W.}},
\bauthor{\bsnm{Wo{\'z}niakowski}, \binits{H.}}:
\batitle{Weighted tensor product algorithms for linear multivariate problems}.
\bjtitle{J. Complexity}
\bvolume{15}(\bissue{3}),
\bfpage{402}--\blpage{447}
(\byear{1999})
\doiurl{10.1006/jcom.1999.0512}
\end{barticle}
\endbibitem

%%% 58
\bibitem[\protect\citeauthoryear{Wu and Liu}{2005}]{wu_2005Adv_generalized}
\begin{barticle}
\bauthor{\bsnm{Wu}, \binits{Z.M.}},
\bauthor{\bsnm{Liu}, \binits{J.P.}}:
\batitle{Generalized {S}trang-{F}ix condition for scattered data
  quasi-interpolation}.
\bjtitle{Adv. Comput. Math.}
\bvolume{23}(\bissue{1}),
\bfpage{201}--\blpage{214}
(\byear{2005})
\doiurl{10.1007/s10444-004-1832-6}
\end{barticle}
\endbibitem

%%% 59
\bibitem[\protect\citeauthoryear{Xiao and Boyd}{2014}]{XiaoBoyd2014}
\begin{barticle}
\bauthor{\bsnm{Xiao}, \binits{J.}},
\bauthor{\bsnm{Boyd}, \binits{J.P.}}:
\batitle{Periodized radial basis functions, part {I}: Theory}.
\bjtitle{Appl. Numer. Math.}
\bvolume{86},
\bfpage{43}--\blpage{73}
(\byear{2014})
\doiurl{10.1016/j.apnum.2014.07.003}
\end{barticle}
\endbibitem

%%% 60
\bibitem[\protect\citeauthoryear{Zeng et~al.}{2009}]{ZengKritzerHickernell2009}
\begin{barticle}
\bauthor{\bsnm{Zeng}, \binits{X.}},
\bauthor{\bsnm{Kritzer}, \binits{P.}},
\bauthor{\bsnm{Hickernell}, \binits{F.J.}}:
\batitle{Spline methods using integration lattices and digital nets}.
\bjtitle{Constr. Approx.}
\bvolume{30}(\bissue{3}),
\bfpage{529}--\blpage{555}
(\byear{2009})
\doiurl{10.1007/s00365-009-9072-0}
\end{barticle}
\endbibitem

\end{thebibliography}

\end{document}